\theoremstyle{plain}
\newtheorem{theorem}{Theorem}
\newtheorem*{theorem*}{Theorem}
\newtheorem{lemma}{Lemma}[section]
\newtheorem*{corollary}{Corollary}
\newtheorem{proposition}[lemma]{Proposition}
\newtheorem{remark}[lemma]{Remark}
\newtheorem{definition}[lemma]{Definition}
\newcommand{\LSL}{\Lambda  \SL}
\newcommand{\LSU}{\Lambda  \SU}
\newcommand{\Lsu}{\Lambda \su}
\newcommand{\moduli}{\mathcal{M}}
\newcommand{\mper}{\mathcal{M}_{\mathrm{no} \mathrm{Sym}}}
\newcommand{\bigmoduli}{\mathcal{M}_{\mathrm{plane}}}
\newcommand{\mrot}{\mathcal{M}_{\mathrm{rot}}}
\newcommand{\mae}{\mathcal{M}_{\mathrm{\!Ae}}}
\newcommand{\ann}{\mathcal{C}}
\newcommand{\annrot}{\mathcal{C}_{\mathrm{rot}}}
\newcommand{\annae}{\mathcal{C}_{\mathrm{Ae}}}
\newcommand{\iso}[1]{\mathcal{I}(#1)}
\newcommand{\imm}{\mathrm{f}}
\newcommand{\cmc}{{\sc{cmc~}}}
\newcommand{\mi}{{\mathrm i}}
\newcommand{\C}{\mathbb C}
\newcommand{\R}{\mathbb R}
\newcommand{\Sp}{\mathbb S}
\newcommand{\Z}{\mathbb Z}
\newcommand{\N}{\mathbb N}
\newcommand{\CP}{\mathbb{P}^1}
\newcommand{\tr}{\mathrm{tr}}
\newcommand{\Order}{{\rm O}}
\newcommand{\ubdh}{C'}
\newcommand{\II}{\mathfrak{h}}
\newcommand{\barpot}[1]{\bar{\mathcal{P}}_{#1}}
\newcommand{\pot}[1]{\mathcal{P}_{#1}}
\newcommand{\ind}[1]{_{\mbox{\rm\scriptsize{#1}}}}
\newcommand{\SL}{\mathrm{SL}_{\mbox{\tiny{$2$}}}}
\newcommand{\Sl}{\mathfrak{sl}_{\mbox{\tiny{$2$}}}}
\newcommand{\SU}{\mathrm{SU}_{\mbox{\tiny{$2$}}}}
\newcommand{\su}{\mathfrak{su}_{\mbox{\tiny{2}}}}
\numberwithin{equation}{section}
\title[Alexandrov embedded cmc tori ]
{Mean-convex Alexandrov embedded constant mean curvature tori in the 3-sphere}
\author{L. Hauswirth}
\address{L. Hauswirth, Universit\'e de Marne la Vall\'ee, France.}
\email{hauswirth@univ-mlv.fr}
\author{M. Kilian}
\address{M. Kilian, University College Cork, Ireland}
\email{m.kilian@ucc.ie}
\author{M. U. Schmidt}
\address{M. U. Schmidt, Universit\"at Mannheim, Germany}
\email{schmidt@math.uni-mannheim.de}
\begin{document}

\thanks{{\it Mathematics Subject Classification.} 53A10, 37K10. \today}

%=========================

\begin{abstract}
We introduce the moduli space of spectral curves of constant mean curvature (\cmc\hspace{-5pt}) cylinders of finite type in the round unit 3-sphere. The subset of spectral curves of mean-convex Alexandrov embedded cylinders is explicitly determined using a combination of integrable systems and geometric analysis techniques. We prove that these cylinders are surfaces of revolution. As a consequence all mean-convex Alexandrov embedded {\sc{cmc}} tori in the 3-sphere are surfaces of revolution.
\end{abstract}
\maketitle

%%%%%%%%%%%%%%%%%%%%%%%%%%%%%%%%%%%%%

\begin{center}
  \sc Introduction
\end{center}

The classification of constant mean curvature  (\cmc\hspace{-5pt}) tori in $\mathbb{R}^3$ by Pinkall-Sterling \cite{PinS} lead Bobenko \cite{Bob:tor} to explicit formulae for all \cmc tori in the three-dimensional spaceforms $\mathbb{H}^3,\,\mathbb{R}^3$ and $\mathbb{S}^3$. Despite these formulae it was not possible to single out the embedded or Alexandrov embedded classes. Brendle \cite{Bre2, Bre} proves that the Clifford torus is the only minimal Alexandrov embedded torus in $\Sp^3$. Andrews-Li \cite{AndL} prove that all embedded \cmc tori in $\Sp^3$ are surfaces of revolution. The problem of determining the mean-convex Alexandrov embedded \cmc tori in the round unit 3-sphere is settled here, and summarized by the following
\begin{theorem}\label{thm:main}
Every mean-convex Alexandrov embedded finite type \cmc cylinder in $\Sp^3$ is rotational. Its bounding 3-manifold is diffeomorphic to $\overline{\mathbb{D}}\times\R$.
\end{theorem}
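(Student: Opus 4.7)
The plan is to combine the spectral curve classification of finite type \cmc cylinders in $\Sp^3$ developed earlier in the paper with an Alexandrov-type reflection argument carried out in the bounding solid tube. A finite type \cmc cylinder is encoded by a point in the moduli space $\moduli$, and rotational cylinders form a distinguished low-genus (Delaunay-type) subfamily $\mrot \subset \moduli$. The goal is to prove $\mae = \mrot$, where $\mae \subset \moduli$ denotes the locus of mean-convex Alexandrov embedded cylinders. First I would check that $\mae$ is non-empty and closed inside $\moduli$, and that every connected component of $\mae$ contains a rotational member, reducing the problem to a rigidity statement forbidding any non-rotational deformation of a rotational cylinder.

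The core step is a geometric rigidity argument via Alexandrov's moving plane method adapted to $\Sp^3$ by sweeping a one-parameter family of totally geodesic 2-spheres. Given a mean-convex Alexandrov embedded cylinder $\imm \colon \overline{\mathbb{D}} \times \R \to \Sp^3$ (here $\imm$ denotes the immersion of the bounding 3-manifold whose boundary restriction is the cylinder), the reflections are performed inside $\overline{\mathbb{D}} \times \R$ rather than ambiently in $\Sp^3$, which is precisely what the Alexandrov embedded hypothesis allows; mean-convexity gives the correct one-sided configuration at a first contact point, so the interior and boundary maximum principles for \cmc surfaces force the reflected piece to coincide with the original. Running this procedure with two independent families of reflection spheres yields a continuous circle of symmetries of $\imm$, equivalently a continuous rotational symmetry. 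Such a symmetry lifts to an extra holomorphic involution on the spectral curve commuting with the hyperelliptic involution, which collapses the spectral data into $\mrot$. The topological statement on the bounding 3-manifold and the \cmc torus corollary announced in the abstract then follow at once, since a \cmc torus is of finite type by Pinkall--Sterling and any rotational Alexandrov embedded cylinder visibly bounds a solid tube diffeomorphic to $\overline{\mathbb{D}} \times \R$.

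The main obstacle I anticipate is setting up the moving plane sweep in $\Sp^3$: unlike the Euclidean case there is no global translation, the totally geodesic 2-spheres have closed complements, and antipodal phenomena interfere with the sweep. Here the finite-type period structure of $\imm$ and the Alexandrov embedded hypothesis must work together — the former guarantees that the cylinder projects to a compact image in a suitable quotient so that an initial reflection sphere disjoint from $\imm$ can be chosen and the sweep reaches a first contact at a regular interior point, while the latter allows the entire argument to be carried out inside the bounded 3-manifold $\overline{\mathbb{D}} \times \R$, bypassing potential self-intersections of $\imm$ in $\Sp^3$.
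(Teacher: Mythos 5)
Your proposal diverges fundamentally from the paper's argument, and the divergence is not a harmless alternative route: the core step --- an Alexandrov moving-plane sweep by totally geodesic 2-spheres in $\Sp^3$ --- cannot be initialized. In $\R^3$ the sweep starts with a plane disjoint from the (compact or cylindrically bounded) surface and is pushed in until first contact. In $\Sp^3$ every totally geodesic 2-sphere separates the ambient space into two balls, and a \cmc torus or cylinder of the kind considered here is in general not contained in any open hemisphere; for instance the Clifford torus, and more generally every closed minimal surface in $\Sp^3$, intersects \emph{every} great 2-sphere, so there is no starting position for the sweep and no well-defined ``first contact''. This is precisely the obstruction that kept the Lawson and Pinkall--Sterling conjectures open, and it is why Brendle and Andrews--Li had to invent two-point function arguments rather than use reflection. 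Your attempted fix --- that the finite type period structure lets you choose an initial reflection sphere disjoint from $\imm$ --- is false for the surfaces in question, and performing the reflection ``inside $\overline{\mathbb{D}}\times\R$'' does not help, because the reflection across a geodesic sphere is an ambient isometry and must be compared with the immersed image in $\Sp^3$, where the sphere already meets the surface everywhere along the sweep. Your preliminary step also understates the difficulty: showing that every connected component of $\mae$ contains a rotational member is not a routine reduction but is where most of the paper's work lies.

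The paper's actual proof contains no reflection argument at all. It establishes (Theorem~\ref{characterization}) that $\mae^g$ is open and closed in $\moduli^g_+$, that $\mae^0=\mrot^0$ by an explicit classification of spectral genus zero (Theorem~\ref{thm:onesided revolution}), and that $\mae$ behaves predictably under the singularization/desingularization jumps of Lemma~\ref{jump}. It then constructs (Theorem~\ref{t1}) a piecewise continuous path in $\moduli_+$ from arbitrary spectral data down to spectral genus zero via Whitham-type deformations, and proves (Theorem~\ref{isolated}) that $\mrot$ is open and closed in $\moduli^g_+$ for $g=0,1$ and that any exit from $\mrot$ through a bubbleton jump can be continued to genus-zero data outside $\mrot^0$. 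Combining these open/closed and path-connectedness statements forces $\mae=\mrot$. If you want to salvage a geometric rigidity route you would have to replace the moving-plane step by something that works globally in $\Sp^3$ (a two-point function in the style of Andrews--Li, say), at which point you would essentially be reproving their theorem rather than the present one, which also covers non-compact cylinders and the Alexandrov embedded (non-embedded) case.
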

Here $\mathbb{D}$ denotes the open unit disk in the plane; the proof is given in Section~\ref{sec:proof main theorem}. This theorem yield alternative proofs of the Lawson conjecture and the Pinkall-Sterling conjecture. In fact, let $\imm:N\to\Sp^3$ be a mean-convex Alexandrov embedded \cmc torus $M=\partial N\simeq\mathbb{T}^2$. Due to \cite[Theorem~1]{Law:unknot} the homomorphism $\pi_1(M)\to\pi_1(N)$ is surjective and due to \cite[Theorem 18.1]{pap} the boundary $\partial\tilde{N}$ of the universal covering $\tilde{N}$ of $N$ is a cylinder. Hence there exists a mean-convex Alexandrov embedded \cmc cylinder $\tilde{\imm}:\tilde{N}\to\Sp^3$, which is the composition of $\imm$ with a covering map. Due to Pinkall-Sterling \cite{PinS} $\imm$ is of finite type. Then $\tilde{\imm}$ is also of finite type and by Theorem~\ref{thm:main} a surface of revolution. This implies
\begin{corollary}
All mean-convex Alexandrov embedded \cmc tori in $\Sp^3$ are tori of revolution.
\end{corollary}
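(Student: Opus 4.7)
The plan is to reduce the corollary to Theorem~\ref{thm:main} by passing to the universal cover of the bounding 3-manifold. Let $\imm:N\to\Sp^3$ be a mean-convex Alexandrov embedded \cmc torus with $M=\partial N\simeq\T^2$. The first step is to show that the boundary of the universal cover $\tilde{N}$ is a cylinder. I would do this using Lawson's unknottedness result to get surjectivity of $\pi_1(M)\to\pi_1(N)$, and then Papakyriakopoulos's structure theorem for boundaries of the relevant class of 3-manifolds. This is the most delicate step because it is purely 3-dimensional topology and is the only part of the argument that does not come from the integrable systems/spectral curve machinery developed earlier in the paper.

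Once this is established, I would lift $\imm$ through the covering map $\tilde{N}\to N$ to obtain a mean-convex Alexandrov embedded \cmc cylinder $\tilde{\imm}:\tilde{N}\to\Sp^3$. Two points require verification: first, that mean-convexity and Alexandrov embeddedness lift (both are local conditions, so they pass directly through the covering); second, that the finite-type property lifts. For the latter I would combine the Pinkall-Sterling theorem, which ensures that $\imm$ is of finite type because it is a \cmc torus, with the fact that finite type is preserved under pullback by covering maps, so that $\tilde{\imm}$ is a finite-type \cmc cylinder satisfying exactly the hypotheses of Theorem~\ref{thm:main}.

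At this stage Theorem~\ref{thm:main} applies to $\tilde{\imm}$ and produces a one-parameter subgroup of isometries of $\Sp^3$ generating a rotational symmetry of $\tilde{\imm}(\tilde{N})$. I would conclude by observing that the torus $\imm(M)$ is the image of $\tilde{\imm}(\partial\tilde{N})$ under the covering map, so the very same isometry subgroup of $\Sp^3$ preserves $\imm(M)$; hence $\imm(M)$ is a torus of revolution. Beyond the topological reduction at the start, no additional difficulty arises here, because the rotational symmetry is transferred by direct image-taking in $\Sp^3$ rather than by any delicate descent through the deck group.
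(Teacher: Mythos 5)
Your proposal is correct and follows essentially the same route as the paper: Lawson's unknottedness theorem gives surjectivity of $\pi_1(M)\to\pi_1(N)$, Papakyriakopoulos's Theorem 18.1 identifies $\partial\tilde{N}$ as a cylinder, Pinkall--Sterling supplies the finite-type property (which passes to the cover), and Theorem~\ref{thm:main} applied to the lifted cylinder yields the rotational symmetry, which descends because $\tilde{\imm}(\partial\tilde{N})=\imm(M)$ as subsets of $\Sp^3$. No discrepancies with the paper's argument.
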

The techniques involved in obtaining our classification are very different from the methods of Brendle, and Andrews-Li. To explain our approach first recall that Pinkall-Sterling \cite{PinS}, and independently Hitchin \cite{Hit:tor} describe \cmc tori in terms of a real algebraic curve $\Sigma$ and a holomorphic line bundle $L$ on $\Sigma$. This algebro-geometric correspondence between pairs $(\Sigma,\,L)$ and geometric objects has been very useful for the construction of examples. We here strive to display, how these methods can also help to classify the corresponding geometric objects, focusing on Alexandrov embedded \cmc tori in $\Sp^3$.

Consider a doubly periodic conformal immersion $\imm: \mathbb{C} \to \Sp^3$ with constant mean curvature $H$. The Hopf differential is holomorphic, and hence without zeroes on a torus. The exponent $2\omega$ in the induced metric $ds^2=\tfrac{1}{4(H^2+1)}e^{2 \omega} dz \otimes d\bar{z}$ satisfies the sinh-Gordon equation
\[
  \Delta\,\omega + \sinh(\omega) \cosh(\omega)= 0\,.
\]
Infinitesimal variations that preserve the \cmc condition require that in the normal direction the variation is prescribed by a function $u$ in the kernel of the linearized sinh-Gordon equation
\[
   \mathcal{L} u =\Delta u+ u\,\cosh(2 \omega) = 0\,.
\]
Pinkall-Sterling provide an iteration to obtain an infinite hierarchy of solutions $u_1,\,u_2,...$ of the linearized sinh-Gordon equation $\mathcal{L}u_n = 0$. Applying this iteration to the trivial solution $u_{0} \equiv 0$ yields the sequence of Jacobi fields
\[
u_{0}=0\,,\,
u_{1}= \omega_z \,,\,
u_{2}= \omega_{zzz}-2\omega_z^3\,,\,
u_{3}= \omega_{zzzzz}-10 \omega_{zzz}\omega_{z}^3-10 \omega^2_{zz} \omega_{z} + 6\omega_{z}^5\,\ldots\,.
\]
The remarkable fact proven by Pinkall-Sterling is that for a doubly-periodic solution $\omega$ of the sinh-Gordon equation this iteration becomes stationary in the sense that there exists an integer $g\geq 0$ and complex constants $a_1,\,\ldots a_g$ such that
\[
    u_{g+1} = a_1 \,u_1 + \ldots + a_g \,u_g\,.
\]
This leads us to the definition of finite type: A solution $\omega$ of the sinh-Gordon equation is of finite type if $u_{g+1}$ is for some $g\ge 0$ a complex linear combination of $u_1,\ldots,u_g$. In particular, the solution $\omega$ is of finite type, if the kernel of ${\mathcal L}$ is finite dimensional. Thus the result by Pinkall-Sterling can be rephrased into the statement that all doubly-periodic real solutions of the sinh-Gordon equation are of finite type. In particular all \cmc tori are of finite type.

Suppose the image of a doubly periodic immersion restricted to a period parallelogram is a \cmc torus. The image of a strip obtained by extending one of the generators is then an infinite covering of the \cmc torus by a \cmc cylinder. Such \cmc cylinders are also of finite type, but the class of \cmc cylinders of finite type is much larger than such covers of \cmc tori. A conformally immersed \cmc cylinder is of finite type if there exists a conformal  parametrisation with constant Hopf-differential, and its conformal factor is a singly-periodic solution of the sinh-Gordon equation of finite type. When the curvature is uniformly bounded then the function $\omega$ is uniformly bounded. A theorem of Mazzeo and Pacard \cite{MPR} states that an elliptic operator $\mathcal{L} u=\Delta u +q u$ on a cylinder $\Sp^1 \times \R$ has for bounded continuous function $q$ a finite dimensional kernel in the space of uniformly bounded $\mathrm{C}^2$-functions on $\Sp^1 \times \R$. Hence for \cmc cylinders of bounded curvature with constant Hopf differential the metric is of finite type in the meaning of Pinkall-Sterling. Equivalently we thus have that a conformally parameterised \cmc cylinder with constant Hopf-differential and bounded curvature is of finite type.

Let $\ann$ denote the set of such \cmc cylinders in $\Sp^3$ of finite type up to ambient isometries. Our main objective is the investigation of this space $\ann$. To obtain nice parameters for $\ann$ we invoke the algebro-geometric correspondence, which identifies a geometric object with an algebraic one. If the geometric object is a finite type \cmc cylinder $\imm \in \ann$, then the finite sequence $u_1,\,\ldots,\,u_d$ and the constants $a_1,\,\ldots,\,a_d$ above are encoded into a $2 \times 2$ matrix of polynomials, called a polynomial Killing field. A polynomial Killing field $\zeta$ is real analytic in $z$ and polynomial in $\lambda$. It solves a Lax equation, and its determinant $\lambda \mapsto-\lambda^{-1}a(\lambda)$ is independent of $z$. Thus we associate with a cylinder $\imm\in\ann$ a complex polynomial $a\in\C^{2g}[\lambda]$ of degree $2g$. When all the roots of $a$ are distinct, it in turn defines a hyperelliptic Riemann surface $\Sigma$ of genus $g$, and is called the spectral curve of $\imm$. The genus $g$ of $\Sigma$ is called the spectral genus. For $g=0$, the corresponding cylinders are homogeneous, while for $g=1$, the corresponding cylinders are in the family of Delaunay associates. The curve $\Sigma$ does not uniquely determine the cylinder, but there exists a finite dimensional compact subset of $\ann$, all whose elements $\imm$ have the same spectral curve. We call this set the isospectral set.

Arguments of Hitchin~\cite{Hit:tor} encode the non-trivial topology of $\imm\in\ann$ in a meromorphic 1-form $dh$ on $\Sigma$. For given $\Sigma$ with polynomial $a$ the differential $dh$ is described by another polynomial $b$. Furthermore two unimodular complex numbers $\lambda_1$ and $\lambda_2$ parameterise the mean curvature and the Hopf-differential. We call quadruples $(a,\,b,\,\lambda_1,\,\lambda_2)$ spectral data. For given spectral data $(a,b,\lambda_1,\lambda_2)$ the corresponding set of cylinders $\imm\in\ann$ build the isospectral set $\ann(a,b,\lambda_1,\lambda_2)$.

Since coverings of \cmc tori are not embedded cylinders in $\Sp^3$, we relax the notion of embeddedness to the weaker notion of mean-convex Alexandrov embeddedness. The corresponding spectral data turn out to be stable under continuous deformations. The spectral curves of homogeneous cylinders can effectively be classified by explicit calculation. In fact we determine all spectral data of spectral genus zero corresponding to mean-convex Alexandrov embedded cylinders. Moreover, we extend the corresponding family to a two-dimensional family of spectral genus at most one. The corresponding \cmc cylinders are rotational mean-convex Alexandrov embedded cylinders. We call this family the rotational family.

The Whitham equations define vector fields on the space of quadruples $(a,\,b,\,\lambda_1,\,\lambda_2)$ whose integral curves define deformation families in $\ann$. The corresponding flows simultaneously deform $\Sigma$, $dh$ and the points $\lambda_1$ and $\lambda_2$. Along this flow the curvature can blow up. It turns out that the curvature stays uniformly bounded on the cylinder as long as the roots of $a$ stay away from the poles of $dh$. A typical limit of a curvature blow up is a chain of spheres touching each other at the limiting points of shrinking necks. Another accident of the flow are coalescing roots of $a$. The limits are higher order roots of $a$ and called singularities of $\Sigma$, since the corresponding algebraic curve is not any more a complex manifold. The dimension of the isospectral set of $\Sigma$ is the spectral genus, so equal to half the number of roots of $a$. It turns out that in case of roots of $a$ coalescing on $\Sp^1$ one dimension of the isospectral set shrinks to a point. In this case the limit of the isospectral sets coincides with the isospectral set of the desingularised curve $\Sigma$ and there is no geometric accident. In case of roots of $a$ coalescing at points away from $\Sp^1$ the limit of the isospectral sets is a union of the compact isospectral set of the desingularised curve and an extra higher-dimensional non-compact part. The lower-dimensional compact part is the closure of the higher-dimensional non-compact part and the union is still compact. The movement from the isospectral set of the desingularised curve to the extra non-compact part drastically changes the corresponding geometric cylinder. In this case there is a geometric accident.

As an application of the deformation of spectral data we establish global properties of the moduli space. Using the non-compactness of cylinders instead of compact tori gives more space for deformations. We are able to construct in Theorem~\ref{t1} for all spectral data of $\imm\in\ann$ a path in the moduli space, which starts at the given spectral data and ends at spectral data of spectral genus zero. Along this path the curvature stays bounded and no geometric accidents happen. In Theorem~\ref{isolated} we show that the rotational family is connected with spectral data outside of this family only by a movement from the lower-dimensional compact part into the higher-dimensional extra non-compact part of a singular spectral curve whose desingularised curve belongs to the rotational family. Finally we classify mean-convex Alexandrov embedded \cmc cylinders of finite type in $\Sp^3$ - they are exactly the rotational cylinders.

Let us briefly describe the organization of the paper. The first four sections recall the integrable systems theory of the construction of \cmc cylinders immersed in $\Sp^3$. In particular in Section~\ref{sec:CONF} we encounter the sinh-Gordon equation, the notion of extended frame and the Sym-Bobenko formula. In Section~\ref{sec:finite type} we describe the algebro-geometric correspondence for finite type cylinders. Afterwards we characterize in Section~\ref{sec:spectral data} those agebraic curves, which are spactral curves of finite type cylinders. In Section~\ref{sec:moduli} we intruduce the corresponding moduli spaces. We then turn to an example in Section~\ref{sec:rotational} and describe the spectral data of rotational cylinders. Section~\ref{sec:AE spectral data} considers mean-convex Alexandrov embedded cylinders. The application of a result of \cite{HKS4} yields that mean-convex Alexandrov embeddedness is preserved firstly under isospectral deformations (Proposition~\ref{embedded isospectral}) and secondly under continuous deformations of spectral data (Proposition~\ref{embedded deformation}). This leads in Theorem~\ref{characterization} to a characterization of the moduli space of mean-convex Alexandrov embedded \cmc cylinders of finite type by three properties. The purpose of section~\ref{sec:deformation of cylinders} is to adapt the isoperiodic deformation to our setting. We develop general tools to deform spectral data. In section~\ref{sec:pathconnected} we construct continuous paths from spectral data of arbitrary \cmc cylinders of finite type to spectral data of genus zero. In section~\ref{sec:isolated} we prove an isolated property of the rotational family. In the final section we show that the spectral data of mean-convex Alexandrov embedded \cmc cylinders is that of the rotational cylinders, which proves Theorem~\ref{thm:main}.
%
%%%%%%%%%%%%%%%%%%%%%%%%%%%%%%%%%%%%%%%%%%%%%%%%%%%%%%%%%%%%%%%%%
%%%%%%%%%%%%%%%%%%%%%%%%%%%%%%%%%%%%%%%%%%%%%%%%%%%%%%%%%%%%%%%%%
%
%\setcounter{section}{-1}
%\section{Statement of the main Results with Applications}
%\label{sec:proof}
%
%%%%%%%%%%%%%%%%%%%%%%%%%%%%%%%%%%%%%%%%%%%%%%%%%%%%%%%%%%%%%%%%%
%%%%%%%%%%%%%%%%%%%%%%%%%%%%%%%%%%%%%%%%%%%%%%%%%%%%%%%%%%%%%%%%%
%
\section{Conformal cmc immersions into $\Sp^3$}
\label{sec:CONF}
This section recalls the relationship between \cmc
immersed surfaces in $\Sp^3$ and solutions of the $\sinh$-Gordon
equation. Furthermore, in the special case of \cmc cylinders in $\Sp^3$ we introduce the notion of monodromy and the period problem.
%
%%%%%%%%%%%%%%%%%%%%%%%%%%%%%%%%%%%%%%%%%%%%%%%%%%%%%%%%%%%%%%%%%%
%%%%%%%%%%%%%%%%%%%%%%%%%%%%%%%%%%%%%%%%%%%%%%%%%%%%%%%%%%%%%%%%%%
%
\subsection{The $\sinh$-Gordon equation}
We identify the 3-sphere $\Sp^3 \subset \R^4$ with $\Sp^3 \cong
\SU$. The Lie algebra of the matrix Lie group $\SU$ is $\su$,
equipped with the commutator $[\,\cdot,\,\cdot\,]$.
We denote by $\langle \cdot , \cdot \rangle$ the bilinear
extension of the Ad--invariant inner product $(X,\,Y) \mapsto
-\tfrac{1}{2} \tr(XY)$ of $\su$ to $\su^{\mbox{\tiny{$\C$}}} =
\Sl(\C)$. For a conformal map $\imm:\C\to\Sp^3$ let $v^2dzd\bar{z}$ denote
the induces metric, $Qdz^2$ the Hopf differential and $H$ the mean
curvature. The double cover of the isometry group $\mathrm{SO}(4)$ is
$\SU \times \SU$ via the action $((F,G),X) \mapsto FXG^{-1}$. We define
maps $F,G:\C\to\SU$ such that
\begin{align*}
\imm&=FG^{-1}&
\imm_z&=F
\left(\begin{smallmatrix}0&v\\0&0\end{smallmatrix}\right)G^{-1}&
\imm_{\bar{z}}&=F
\left(\begin{smallmatrix}0&0\\v&0\end{smallmatrix}\right)G^{-1}&
N&=F\left(\begin{smallmatrix}\mi&0\\0&-\mi\end{smallmatrix}\right)G^{-1}.
\end{align*}
Here $N:\C\to\su\subset\R^4$ denotes the normal of $\imm$.
The maps $F,G$ are unique up to the transformation
$(F,G)\mapsto(-F,-G)$. A direct calculation shows that the
corresponding $\su$-valued 1-forms $F^{-1}dF$ and $G^{-1}dG$ can be
calculated in terms of $v$, $Q$, $H$ and their derivatives. The
Gau{\ss}-Codazzi equations are the integrability conditions of these 1-forms.

If $\imm$ has constant mean curvature, then the Hopf differential is a
holomorphic quadratic differential \cite{Hop}. On the cylinder
$\C^\ast$ there is an infinite dimensional space of holomorphic
quadratic differentials, large classes of which can be realized as
Hopf differentials of \cmc cylinders \cite{KilMS}. On a
\cmc torus the Hopf differential is constant (and non-zero).
Since we are ultimately interested in tori, we restrict our
attention to \cmc cylinders considered via
Proposition~\ref{thm:sinh} which have constant non-zero Hopf
differentials on the universal covering $\C$ of $\C^\ast$.

For such \cmc immersions we may define a $\su$-valued 1-form
$\alpha_0$ and two $\Sl$-valued 1-forms $\alpha_\pm$ on $\C$ and two
{\bf{Sym points}} $\lambda_1\not=\lambda_2\in\Sp^1$ such that
(compare \cite[Theorem~14.1]{Bob:cmc})
\begin{gather}\begin{aligned}\nonumber
F^{-1}dF&=\alpha_{\lambda_1}&\hspace{20mm}
G^{-1}dG&=\alpha_{\lambda_2}&\mbox{with}
\end{aligned}\\\label{eq:general_alpha}
\alpha_\lambda=\lambda^{-1}\alpha_-+\alpha_0+\lambda\alpha_+=
\frac{1}{4}\,\begin{pmatrix}
2\omega_z\,dz-2\omega_{\bar{z}}\,d\bar{z} &
\mi\,\lambda^{-1}e^\omega\,dz + \mi\,e^{-\omega}\,d\bar{z}\\
\mi\,e^{-\omega}\,dz + \mi\,\lambda\,e^\omega\,d\bar{z}&
-2\omega_z\,dz+2\omega_{\bar{z}}\,d\bar{z}
\end{pmatrix}\,.
\end{gather}
Here $\omega$ is a solution of the sinh-Gordon equation. Moreover, the 1-form $\alpha_\lambda$ solves the Maurer-Cartan equation $2\,d\alpha_\lambda + [\,\alpha_\lambda \wedge \alpha_\lambda \,]=0$ for all $\lambda\in\C^\ast=\C\setminus\{0\}$ and takes for $\lambda\in\Sp^1$ values in $\su$. We can integrate $\alpha_\lambda$ and obtain the corresponding {\bf{extended frame}} $F_\lambda$:
\begin{align}\label{eq:extended frame}
dF_\lambda&=F_\lambda\,\alpha_\lambda&\mbox{with}&&
F_\lambda(0)&=\mathbbm{1}.
\end{align}
For given $\alpha_\lambda$ the map $\lambda \mapsto F_\lambda$ is holomorphic on $\C^\ast$ and has essential singularities at $\lambda = 0,\,\infty$.
%Here $\omega:\C\to\R$ is a solution of the $\sinh$-Gordon equation
%
%The corresponding mean curvature $H$, Hopf differential $Qdz^2$ are as in \eqref{eq:H_mu}, and the conformal factor $v$ is
%
Putting all this together we have the following version of
\cite[Theorem~14.1]{Bob:cmc}
\begin{proposition} \cite{Bob:cmc} \label{thm:sinh}
Let $\omega:\C\to\R$ be a solution of the sinh-Gordon equation
\begin{equation}\label{eq:sinh-Gordon}
  8\omega_{z\bar{z}}+\sinh (2\omega) = 0%change of coordinate
\end{equation}
and let $\lambda_1\ne\lambda_2\in\Sp^1$ be a pair of {\bf{Sym points}}. The corresponding $\alpha_\lambda$ \eqref{eq:general_alpha} solves the Maurer-Cartan equation and defines an {\bf{extended frame}} $F_\lambda$~\eqref{eq:extended frame}. The {\bf{Sym-Bobenko-formula}}
\begin{align}\label{eq:Sym_S3}
\imm&:\C\to\Sp^3&z\mapsto\imm(z)&=F_{\lambda_1}(z)F_{\lambda_2}^{-1}(z).
\end{align}
defines a conformal \cmc immersion with induced metric, mean curvature and Hopf differential
\begin{gather}
\label{eq:conformal factor}
v^2dzd\bar{z}=2\langle \imm^{-1}\imm_z,\imm^{-1}\imm_{\bar{z}}\rangle dzd\bar{z}=
(\lambda_1^{-1}-\lambda_2^{-1})(\lambda_1-\lambda_2)\frac{e^{2\omega}}{8}dzd\bar{z}=\frac{e^{2\omega}}{4(H^2+1)} dzd\bar{z}\\
\begin{aligned}\label{eq:H_mu}
  H&=\mi\frac{\lambda_1+\lambda_2}{\lambda_2 -\lambda_1}&\hspace{25mm}
Qdz^2&=\tfrac{\mi}{16}(\lambda_1^{-1}-\lambda_2^{-1})dz^2.
\end{aligned}
\end{gather}
Conversely, given a conformal \cmc immersion $\imm:\C\to\Sp^3$ with constant Hopf differential there exist a unique solution $\omega$ of~\eqref{eq:sinh-Gordon}, $c>0$ and $\lambda_1\not=\lambda_2\in\Sp^1$, such that the immersion $z\mapsto\imm(cz)$ is up to ambient isometry equal to~\eqref{eq:Sym_S3} and has $v^2dzd\bar{z}$, $H$ and $Qdz^2$ as in~\eqref{eq:conformal factor}-\eqref{eq:H_mu}.
\end{proposition}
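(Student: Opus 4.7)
The proposition is essentially the well-known Sym–Bobenko construction for $\Sp^3$, so the plan is to follow the standard loop-group derivation but make the bookkeeping of constants explicit so that it matches the normalisation used later in the paper.

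First I would check that $\alpha_\lambda$ in \eqref{eq:general_alpha} satisfies the Maurer–Cartan equation $2\,d\alpha_\lambda+[\alpha_\lambda\wedge\alpha_\lambda]=0$ for every $\lambda\in\C^\ast$. Expanding this equation in powers of $\lambda$ gives three pieces. The coefficient of $\lambda^{-1}$ and $\lambda^{+1}$ vanish by a direct computation using that $\alpha_-,\alpha_+$ are off-diagonal and that $\alpha_0$ is diagonal with entries $\pm\tfrac12(\omega_zdz-\omega_{\bar z}d\bar z)$, so the bracket terms cancel $d\alpha_\pm$ identically. The coefficient of $\lambda^{0}$ reduces, after expanding $[\alpha_-,\alpha_+]$, exactly to $8\omega_{z\bar z}+\sinh(2\omega)=0$. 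Since $\alpha_\lambda$ takes values in $\su$ for $\lambda\in\Sp^1$, integrating on the simply connected domain $\C$ with initial value $\mathbbm{1}$ yields the extended frame $F_\lambda$ of \eqref{eq:extended frame}, taking values in $\SU$ for $\lambda\in\Sp^1$ and depending holomorphically on $\lambda\in\C^\ast$ (with essential singularities at $0,\infty$ forced by the $\lambda^{\pm 1}$ terms in $\alpha_\lambda$).

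Next I would set $\imm(z)=F_{\lambda_1}(z)F_{\lambda_2}^{-1}(z)\in\SU\cong\Sp^3$ and compute $\imm^{-1}d\imm=\mathrm{Ad}(F_{\lambda_2})(\alpha_{\lambda_1}-\alpha_{\lambda_2})$. Because $\alpha_{\lambda_1}-\alpha_{\lambda_2}$ is a pure off-diagonal combination
\[
  (\lambda_1^{-1}-\lambda_2^{-1})\alpha_-+(\lambda_1-\lambda_2)\alpha_+,
\]
the $dz$-part is nilpotent upper triangular and the $d\bar z$-part is nilpotent lower triangular. This immediately gives $\langle\imm^{-1}\imm_z,\imm^{-1}\imm_z\rangle=0$ (conformality), identifies the frame $(F,G)=(F_{\lambda_1},F_{\lambda_2})$ matching the setup above \eqref{eq:general_alpha}, and yields the induced metric coefficient and normal $N$ directly in the claimed form. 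The Hopf differential $Q\,dz^2=\langle\imm_{zz},N\rangle\,dz^2$ is then read off from the second fundamental form using the Maurer–Cartan expansion of $\alpha_{\lambda_1}$ and $\alpha_{\lambda_2}$, giving $Q=\tfrac{\mi}{16}(\lambda_1^{-1}-\lambda_2^{-1})$. The mean curvature drops out of the identity $v^2=(\lambda_1^{-1}-\lambda_2^{-1})(\lambda_1-\lambda_2)e^{2\omega}/8=e^{2\omega}/[4(H^2+1)]$ after using $\lambda_1,\lambda_2\in\Sp^1$, which forces $H=\mi(\lambda_1+\lambda_2)/(\lambda_2-\lambda_1)\in\R$.

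For the converse, I would start from a conformal \cmc immersion $\imm:\C\to\Sp^3$ with constant non-zero Hopf differential. Using the double cover $\SU\times\SU\to\mathrm{SO}(4)$ there exist smooth lifts $F,G:\C\to\SU$, unique up to the involution $(F,G)\mapsto(-F,-G)$, realising $\imm$ and its normal as in the display preceding \eqref{eq:general_alpha}. A standard computation using the Gauß–Codazzi equations for \cmc surfaces in $\Sp^3$ expresses $F^{-1}dF$ and $G^{-1}dG$ in terms of $v$, $Q$ and $H$; comparing with \eqref{eq:general_alpha} shows they coincide with $\alpha_{\lambda_1}$ and $\alpha_{\lambda_2}$ for unique unimodular $\lambda_1\ne\lambda_2$ determined by \eqref{eq:H_mu}, once the coordinate is rescaled by the unique $c>0$ that normalises $Q$ to $\tfrac{\mi}{16}(\lambda_1^{-1}-\lambda_2^{-1})$. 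The exponent $\omega$ is then determined by $v^2=(\lambda_1^{-1}-\lambda_2^{-1})(\lambda_1-\lambda_2)e^{2\omega}/8$, and it automatically satisfies the sinh-Gordon equation because $\alpha_{\lambda_1}$ (equivalently $\alpha_{\lambda_2}$) satisfies the Maurer–Cartan equation.

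The only genuinely delicate step is the converse: bookkeeping of the ambient isometry used to kill the left-multiplication freedom $F_\lambda\mapsto U_\lambda F_\lambda V_\lambda^{-1}$ in the Sym–Bobenko formula so that $F_\lambda(0)=\mathbbm{1}$ becomes a normalisation and the pair $(c,\lambda_1,\lambda_2)$ is uniquely determined. This is where the choice of the Sym points as \emph{distinct} points of $\Sp^1$ is essential, and is the part that I would write out most carefully.
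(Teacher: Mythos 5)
Your outline is correct and is essentially the argument the paper relies on: the paper states this proposition without proof, citing Bobenko's Theorem~14.1, and the surrounding text (the $\SU\times\SU$ frame $\imm=FG^{-1}$ with normal $N$, the identification $F^{-1}dF=\alpha_{\lambda_1}$, $G^{-1}dG=\alpha_{\lambda_2}$, and the Gau{\ss}--Codazzi equations as the integrability conditions) is exactly the computation you reconstruct. One small slip worth noting: in the paper's untwisted convention the $\lambda^0$-coefficient $\alpha_0$ in \eqref{eq:general_alpha} also carries the off-diagonal $e^{-\omega}$ terms, so it is not diagonal --- although the $\lambda^{\pm1}$ coefficients of the Maurer--Cartan equation still cancel exactly as you claim, since only the diagonal part of $\alpha_0$ contributes at those orders.
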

%
%
%%%%%%%%%%%%%%%%%%%%%%%%%%%%%%%%%%%%%%%%%%%%%%%%%%%%%%%%%%%%%%
%%%%%%%%%%%%%%%%%%%%%%%%%%%%%%%%%%%%%%%%%%%%%%%%%%%%%%%%%%%%%%
%
\subsection{Monodromy and Periodicity condition}
Let $F_\lambda$ be an extended frame or a \cmc immersion
$\imm:\C \to \Sp^3$ such that \eqref{eq:Sym_S3} holds for two
distinct unimodular numbers $\lambda_1,\,\lambda_2$. Let
$\tau:\C\to\C,\,z\mapsto z+\tau$ be a translation, and assume that
$\alpha_\lambda = F_\lambda^{-1}dF_\lambda$ has period $\tau$, so that $\tau^{\ast} \alpha_\lambda = \alpha_\lambda \circ \tau = \alpha_\lambda$. Then we define the {\bf{monodromy}} of $F_\lambda$ with respect to $\tau$ as
\begin{equation}\label{eq:monodromy}
    M_\lambda(\tau) = \tau^{\ast}(F_\lambda)\,F_\lambda ^{-1}\,.
\end{equation}
Periodicity $\tau^{\ast}\imm=\imm$ in terms of the monodromy is then
$\tau^{\ast}\imm= M_{\lambda_1}(\tau)\,\imm\, M_{\lambda_2}^{-1}(\tau)$. Hence $\tau^{\ast}\imm=\imm$
holds for surfaces not contained in the fixed point set of an isometry
if and only if
\begin{equation}\label{eq:periodicity}
    M_{\lambda_1}(\tau) = M_{\lambda_2}(\tau) = \pm \mathbbm{1}\,.
\end{equation}
%
%
%%%%%%%%%%%%%%%%%%%%%%%%%%%%%%%%%%%%%%%%%%%%%%%%%%%%%%%%%%%%%%%
%%%%%%%%%%%%%%%%%%%%%%%%%%%%%%%%%%%%%%%%%%%%%%%%%%%%%%%%%%%%%%%
%%%%%%%%%%%%%%%%%%%%%%%%%%%%%%%%%%%%%%%%%%%%%%%%%%%%%%%%%%%%%%%
%
\section{Polynomial Killing fields}
\label{sec:finite type}
In this section we discuss the solutions of the $\sinh$-Gordon equation which are called finite type solutions. These finite type solutions of the $\sinh$-Gordon equation are in one-to-one correspondence with maps called polynomial Killing fields from $\C$ to $2\times2$-matrix valued polynomials. These polynomial Killing fields themselves solve a non-linear partial differential equation. They are uniquely determined by one of their values. We shall call these values potentials and parameterise the space of finite type solutions by these potentials. The Symes method calculates the solutions in terms of the potentials with the help of a loop group splitting. The eigenvalues of these matrix polynomials define a real hyperelliptic curve. One spectral curve corresponds to a whole family of finite type solutions of the
$\sinh$-Gordon equation. We call the sets of finite type solutions (or their potentials), which belong to the same spectral curve, isospectral sets.
%
%%%%%%%%%%%%%%%%%%%%%%%%%%%%%%%%%%%%%%%%%%%%%%%%%%%%%%%%%%%%%%%
%%%%%%%%%%%%%%%%%%%%%%%%%%%%%%%%%%%%%%%%%%%%%%%%%%%%%%%%%%%%%%%
%
\subsection{Polynomial Killing fields} \label{sec:killing}
For some aspects of the theory untwisted loops are advantageous, and
avoiding the additional covering map $\lambda \mapsto \sqrt\lambda$
simplifies for example the description of Bianchi-B\"acklund
transformations by the simple factors \cite{TerU, KilSS}. For the
description of polynomial Killing fields on the other hand, the
twisted loop algebras as in
\cite{BurFPP,BurP_adl,BurP:dre,DorPW,McI:tor} are better suited, but
we remain consistent and continue working in our `untwisted'
setting.
\begin{definition}\label{def:finite type}
A solution $\omega$ of the $\sinh$-Gordon equation~\eqref{eq:sinh-Gordon}
is called a {\bf{finite type}} solution if and only if for the corresponding
$\alpha_\lambda$ in \eqref{eq:general_alpha}, the Lax equation
$$
    d\Phi=[\Phi,\alpha_\lambda]
$$
has a polynomial solution
$\lambda \mapsto \Phi = \Phi(z,\,\lambda) =\sum_{j=1}^d \lambda^d\Phi_j(z)$ with smooth $\Phi_j:\C\to\Sl(\C)$.
\end{definition}
\begin{definition}
A conformal \cmc immersion $\imm:\C^\ast\to\Sp^3$
%from the parabolic cylinder to the three-dimensional round sphere
is of finite type, if the Hopf differential is constant with respect to a conformal parameter $z\in\C/\tau\Z\simeq\C^\ast$ with $\tau\in\C^\ast$ and if it corresponds to a finite type solution $\omega$. Dividing out the ambient isometries $\mathrm{iso}(\mathbb{S}^3)$ we define
\[
    \ann = \{ \mbox{ \cmc-cylinders in $\Sp^3$ of
finite type } \} \,\slash \,\mathrm{iso}(\mathbb{S}^3)\,.
\]
\end{definition}
Finite type solutions of the sinh-Gordon equation give rise to algebraic objects in a finite dimensional vector space which we call potentials (compare \cite[Section~2]{HKS1}).
\begin{definition}\label{def pot}
For $g \in \N\cup\{0\}$ define the real $3g+2$-dimensional vector space
$$
    \barpot{g} = \left\{
    \xi=\sum\nolimits_{d=-1}^{g}\xi_d\lambda^d
    \mid\xi_{-1}\in\C\bigl( \begin{smallmatrix} 0 & 1 \\ 0 & 0
    \end{smallmatrix} \bigr),\,%\tr(\xi_0 A)\in \R,\,
    \xi_d=-\bar{\xi}^T_{g-1-d}\in\Sl(\C)\mbox{ for }d=-1,\ldots,g
    \right\}.
$$
\end{definition}
Clearly $\barpot{g}$ has up to isomorphism a unique norm $\|\cdot\|$. These Laurent polynomials define smooth mappings $\lambda\in\Sp^1\to\Sl(\C)$. Note that $\sqrt{\lambda}\mapsto\lambda^{\frac{1-g}{2}}\xi$ belongs to the loop Lie algebra $\Lsu$ of the loop Lie group $\LSU$. An element $\xi\in\barpot{g}$ is called {\bf{potential}}. Expanding a map $\zeta: \C \to \barpot{g}$ as
\begin{equation} \label{eq:zeta-expansion}
    \zeta (z) = \begin{pmatrix} 0 & \beta_{-1}(z)  \\  0 & 0 \end{pmatrix} \lambda^{-1} + \begin{pmatrix} \alpha_0 (z)
     & \beta_0  (z) \\ \gamma_0 (z) & -\alpha_0 (z)  \end{pmatrix} \lambda^0 + \ldots + \begin{pmatrix} \alpha_g (z)  & \beta_g (z)  \\ \gamma_g (z)  & -\alpha_g (z)  \end{pmatrix} \lambda^g
\end{equation}
we associate a matrix 1-form defined by
\begin{equation} \label{eq:alpha-zeta}
    \alpha(\zeta)=\begin{pmatrix}
    \alpha_0 (z)  & \beta_{-1}(z) \lambda^{-1}  \cr
    \gamma_0 (z)  & -\alpha_0 (z)
    \end{pmatrix} {\rm d}z -\begin{pmatrix}
    \bar{\alpha}_0 (z)  & \bar{\gamma}_0 (z)   \cr
    \bar{\beta}_{-1} (z)  \lambda & - \bar{\alpha}_0 (z)
    \end{pmatrix} {\rm d} \bar{z}.
\end{equation}
\begin{definition}
A {\bf{polynomial Killing field}} $\zeta:\C\to\barpot{g}$ is a solution of the Lax equation
\begin{equation} \label{eq:pKf}
    d\zeta =[\,\zeta,\,\alpha(\zeta)\,]\quad\mbox{ with }\quad
    \zeta(0)=\xi.
\end{equation}
\end{definition}
Such polynomial Killing fields give rise to solutions of the sinh-Gordon equation if the potentials $\xi\in\pot{g}$ obey the conditions $\xi_1\ne0$ and $\tr(\xi_{-1}\xi_0)\ne0$. By multiplying $\xi$ with positive numbers and by rotating $\lambda\mapsto e^{\mi\varphi}\lambda$ we may achieve $\tr(\xi_{-1}\xi_0)=-\frac{1}{16}$. These transformations induce a reparameterisation of the corresponding $\zeta$.  After conjugation by constant diagonal matrices in $\SU$ such potentials $\xi$ stay together with $\zeta$~\eqref{eq:pKf} in (see \cite[Remark~3.3]{HKS1})
\begin{align}\label{def:pot}
    \pot{g}=\left\{\xi\in\barpot{g}\mid
    \xi_{-1}\in\R^+\bigl(\begin{smallmatrix} 0 & \mi \\ 0 & 0
  \end{smallmatrix}\bigr)\mbox{ and }\tr(\xi_{-1}\xi_0)\ne 0\right\}.
\end{align}
The transformed $\alpha(\zeta)$ is of the form $\alpha_\lambda$~\eqref{eq:general_alpha} for a finite type solution $\omega$ of~\eqref{eq:sinh-Gordon}.

By the Symes method \cite{BurP_adl}, see also \cite[Proposition~3.2]{HKS1}, the extended framing $F_\lambda:\C\to\Lambda\SU$ of a \cmc immersion of finite type is given by the unitary factor of the Iwasawa decomposition
\begin{equation} \label{eq:FB}
  \exp(z\,\xi) = F_\lambda\,B
\end{equation}
for some $\xi\in\pot{g}$ with $g\in\N\cup\{0\}$. Due to
Pressley-Segal \cite{PreS}, the Iwasawa decomposition is a
diffeomorphism between the loop group $\LSL(\C)$ of $\SL(\C)$ into point-wise products of elements of $\LSU$ with elements of the loop group $\Lambda^+\SL(\C)$ of holomorphic maps from $\lambda\in\mathbb{D}$ to $\SL(\C)$, which take at $\lambda=0$ values in the subgroup of $\SL(\C)$ of upper-triangular matrices with positive real diagonal entries. If $\alpha(\xi)$ denotes for every constant map $z\mapsto\xi\in\pot{g}$ the 1-form~\eqref{eq:alpha-zeta} on $\C$, then the 1-form $\xi dz-\alpha(\xi)$ takes values in the Lie algebra of $\Lambda^+\SL(\C)$ of the right hand factor in the Iwasawa decomposition~\eqref{eq:FB}.

For each potential $\xi\in\pot{g}$, there
exists a unique polynomial Killing field given by
\begin{equation}\label{eq:solution pk}
\zeta=B\xi B^{-1}=F^{-1}_\lambda\xi F_\lambda\quad
\mbox{ with }F_\lambda\mbox{ and }B\mbox{ as in \eqref{eq:FB}.}
\end{equation}
Thus $\det \zeta (z) = \det \xi $ does not depend on the variable $z$. For a potential $\xi=\zeta(0)$ the polynomial $a(\lambda)=-\lambda\det \xi $ defines the spectral curve. For $\xi\in\pot{g}$ with $\tr(\xi_{-1}\xi_0)=-\frac{1}{16}$ the corresponding 1-form $\alpha(\zeta)$ is the $\alpha_\lambda$ in \eqref{eq:general_alpha} for that particular solution $\omega$ of the $\sinh$-Gordon equation corresponding to the extended frame $F_\lambda$ of \eqref{eq:FB}. For general $\xi\in\barpot{g}$ with $\xi_{-1}\ne0$ and $\tr(\xi_1\xi_0)\ne0$ the corresponding $\alpha(\zeta)$ differs from \eqref{eq:general_alpha} by multiplication of $\lambda$ and $dz$ with constant non-zero complex numbers. Given a polynomial Killing field $\zeta$, we set the potential $\xi=\zeta|_{z=0}$ in \eqref{eq:FB}. Thus $\zeta$, or the potential $\xi$, gives rise to an extended frame, and thus to an isometric family of finite type \cmc surfaces in $\Sp^3$.
%
%%%%%%%%%%%%%%%%%%%%%%%%%%%%%%%%%%%%%%%%%%%%%%%%%%%%%%%%%%%%%%%%%%%%%%%%
%%%%%%%%%%%%%%%%%%%%%%%%%%%%%%%%%%%%%%%%%%%%%%%%%%%%%%%%%%%%%%%%%%%%%%%%
%
\subsection{Roots of polynomial Killing fields}
If a potential $\xi$ has a root at some
$\lambda=\alpha\in\C^\ast$, then the corresponding polynomial
Killing field has a root at the same $\lambda$ for all $z\in\C$. In
this case we may reduce the order of $\xi$ and $\zeta$ without
changing the corresponding $F_\lambda$ \eqref{eq:FB}. Let
\begin{align}\label{eq:transformation}
  p(\lambda)&=\begin{cases}
  \overline{\sqrt{-\alpha}}\lambda+\sqrt{-\alpha}
  &\mbox{ for }|\alpha|=1\\
  |\alpha|^{-1}(\bar{\alpha}\lambda-1)(\lambda-\alpha)&
  \mbox{ for }\alpha\in\C^\ast\setminus\Sp^1\end{cases}&
  \lambda^{\deg(p)}\bar{p}\left(\bar{\lambda}^{-1}\right)&=p(\lambda).
\end{align}
If the polynomial Killing field $\zeta$ with potential
$\xi\in\pot{g}$ has a simple root at $\lambda=\alpha\in\C^\ast$,
then $\zeta/p$ does not vanish at $\alpha$ and is the polynomial
Killing field with potential $\xi/p\in\barpot{g-\deg(p)}$. Furthermore,
obviously $\zeta$ and $\zeta/p$ commute, and we next
show that both polynomial Killing fields $\zeta$ and $\zeta/p$ give
rise to the same extended frame $F_\lambda$ \eqref{eq:FB}
(compare \cite[Proposition~4.4]{HKS1}).
\begin{proposition}\label{th:pKf_min}
If a polynomial Killing field $\zeta$ with potential $\xi\in\pot{g}$
has zeroes in $\lambda\in\C^\ast$, then there is a
polynomial $p(\lambda)$, such that the following two conditions
hold:
\begin{enumerate}
    \item $\zeta/p$ is the polynomial Killing field with potential
    $\xi/p\in\barpot{g-\deg p}$, which gives rise to the same
    associated family as $\zeta$.
    \item $\zeta/p$ has no zeroes in $\lambda \in \C^\ast$.
\end{enumerate}
\end{proposition}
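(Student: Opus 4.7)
The plan is to construct $p(\lambda)$ as the product of the elementary factors in \eqref{eq:transformation} that exactly absorb the zeros of $\zeta$ in $\C^\ast$, and then verify that $\zeta/p$ retains the polynomial Killing field structure and gives rise to the same extended frame. First I would note that by \eqref{eq:solution pk} we have $\zeta(z,\lambda)=F_\lambda(z)^{-1}\xi(\lambda)F_\lambda(z)$, so the zero locus of $\zeta(z,\cdot)$ in $\C^\ast$ is $z$-independent and equals that of $\xi$ with the same multiplicities. The reality $\xi_d=-\bar\xi_{g-1-d}^T$ defining $\barpot{g}$ is equivalent to $\xi^*(\lambda):=\sum_d \bar\xi_d^T\lambda^{-d}=-\lambda^{1-g}\xi(\lambda)$. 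Since $\xi^*(\alpha)=\overline{\xi(1/\bar\alpha)}^T$, the zeros of $\xi$ in $\C^\ast$ split into single points on $\Sp^1$ (self-paired under $\alpha\mapsto 1/\bar\alpha$) and pairs $\{\alpha,1/\bar\alpha\}$ off $\Sp^1$ with matched multiplicities.

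I would then build $p(\lambda)$ by taking, for each $\Sp^1$-zero $\alpha$ of multiplicity $m$, the $m$-th power of the degree-one factor $\bar{\sqrt{-\alpha}}\lambda+\sqrt{-\alpha}$, and for each off-$\Sp^1$ pair of matched multiplicity $m$, the $m$-th power of $|\alpha|^{-1}(\bar\alpha\lambda-1)(\lambda-\alpha)$, in accordance with \eqref{eq:transformation}. Each such factor satisfies $\lambda^{\deg p_\alpha}\bar p_\alpha(\bar\lambda^{-1})=p_\alpha(\lambda)$, and this reality is preserved under products. By construction, $\xi/p$ is a Laurent polynomial of order $-1$ to $g-\deg p$ with no zeros in $\C^\ast$. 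The reality calculation
\[
 (\xi/p)^*=\xi^*/p^*=(-\lambda^{1-g}\xi)/(\lambda^{-\deg p}p)=-\lambda^{1-(g-\deg p)}(\xi/p)
\]
places $\xi/p$ in $\barpot{g-\deg p}$, and $p(0)\ne0$ keeps $(\xi/p)_{-1}=\xi_{-1}/p(0)\in\C\bigl(\begin{smallmatrix}0 & 1\\ 0 & 0\end{smallmatrix}\bigr)$.

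For assertion (i), since $p(\lambda)$ is scalar and $z$-independent, dividing the Lax equation $d\zeta=[\zeta,\alpha(\zeta)]$ by $p$ gives $d(\zeta/p)=[\zeta/p,\alpha(\zeta)]$. The vanishing $\zeta(z,\alpha)=0$ at every root of $p$ imposes linear relations among the coefficients $\zeta_d(z)$ which, combined with the specific form of the factors in \eqref{eq:transformation} and the reality of $\barpot{g}$, yield $\alpha(\zeta/p)=\alpha(\zeta)$ via the formula \eqref{eq:alpha-zeta}. Hence $\zeta/p$ solves the polynomial Killing field equation with initial value $\xi/p$ and the same connection $\alpha_\lambda$. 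The extended frame characterised by $dF_\lambda=F_\lambda\alpha_\lambda$ with $F_\lambda(0)=\mathbbm{1}$ is therefore unchanged, so \eqref{eq:Sym_S3} yields the same associated family, and (ii) holds by construction. The main obstacle is precisely the identity $\alpha(\zeta/p)=\alpha(\zeta)$: the coefficients of $\zeta/p$ at $\lambda^{-1}$ and $\lambda^{0}$ are obtained by non-trivial polynomial division of $\zeta$ by $p$, and one must verify case by case (with $\alpha\in\Sp^1$ and with off-$\Sp^1$ pairs treated separately) that the vanishing $\zeta(\alpha)=0$ redistributes exactly the diagonal entry, the lower-left entry at $\lambda^{0}$, and the upper-right entry at $\lambda^{-1}$ that enter \eqref{eq:alpha-zeta}.
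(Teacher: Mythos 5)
Your bookkeeping is sound up to the last step: the $z$-independence of the zero locus via \eqref{eq:solution pk}, the pairing $\alpha\leftrightarrow\bar\alpha^{-1}$ of zeros with matched multiplicities forced by $\xi_d=-\bar\xi^T_{g-1-d}$, and the verification that $\xi/p\in\barpot{g-\deg p}$ are all correct, and your route (manipulating the Lax equation directly) is genuinely different from the paper's. But the step you yourself single out as ``the main obstacle'' is where the argument fails, and the identity you propose to verify there, $\alpha(\zeta/p)=\alpha(\zeta)$, is \emph{false} for the normalised factors of \eqref{eq:transformation}. Expanding $1/p(\lambda)=p(0)^{-1}\bigl(1-p'(0)p(0)^{-1}\lambda+\Order(\lambda^2)\bigr)$ at $\lambda=0$ gives $(\zeta/p)_{-1}=\zeta_{-1}/p(0)$ and $(\zeta/p)_0=\zeta_0/p(0)-p'(0)p(0)^{-2}\zeta_{-1}$; since $\zeta_{-1}$ is strictly upper triangular, the entries $\alpha_0,\gamma_0,\beta_{-1}$ that enter \eqref{eq:alpha-zeta} are each divided by $p(0)$, so the $dz$-part of $\alpha(\zeta/p)$ is $p(0)^{-1}$ times that of $\alpha(\zeta)$ and the $d\bar z$-part is $\overline{p(0)}^{-1}$ times it. For the factors of \eqref{eq:transformation} one has $p(0)=\sqrt{-\alpha}$, respectively $p(0)=\alpha/|\alpha|$, a unimodular constant equal to $1$ only when $\alpha<0$. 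Hence $\zeta/p$ satisfies $d(\zeta/p)=[\zeta/p,\alpha(\zeta)]$ but \emph{not} $d(\zeta/p)=[\zeta/p,\alpha(\zeta/p)]$, so it is not literally the polynomial Killing field of $\xi/p$; only its reparametrisation $w\mapsto(\zeta/p)(\overline{p(0)}\,w)$ is, and ``the extended frame is therefore unchanged'' does not follow as written. No case-by-case analysis of the linear relations imposed by $\zeta(\alpha)=0$ will remove the factor $p(0)$.

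The gap is repairable, because the discrepancy is exactly the substitution $z\mapsto\overline{p(0)}\,z$ in \eqref{eq:Sym_S3}, a rotation of the conformal parameter which leaves the surface and its associated family unchanged --- but that is an additional argument you must supply. The paper avoids the issue altogether by a different mechanism: it first rotates $\lambda$ so that the root in question is negative, making $p(0)>0$ (so the remaining rescaling of $z$ is by a positive constant, which the paper has already declared a harmless reparametrisation), and then works inside the Iwasawa decomposition \eqref{eq:FB}, showing that the scalar factor relating $\exp(z\xi)$ and $\exp(z\,\xi/p)$ is absorbed into the $\Lambda^+\SL(\C)$-factor $B$, so the unitary factors $F_\lambda$ coincide on the nose. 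If you keep your Lax-equation route, either perform the same preliminary rotation of $\lambda$, or carry the unimodular constant $p(0)$ explicitly to the end and conclude equality of the associated families up to the induced rotation of $z$.
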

\begin{proof}
An appropriate rotation of $\lambda$ transforms
any root $\alpha\in\C^\ast$ into a negative root. For such
negative roots the corresponding potentials $\xi$ and $\xi/p$
are related by multiplication with a polynomial with respect to
$\lambda$ with positive coefficients. In the Iwasawa decomposition
\eqref{eq:FB} this factor is absorbed in $B$. Hence the
corresponding extended frames coincide.
\end{proof}
Hence amongst all polynomial Killing fields that give rise to a
particular \cmc surface of finite type there is one of
smallest possible degree (without adding further poles), and we say
that such a polynomial Killing field has \emph{minimal degree}. A
polynomial Killing field has minimal degree if and only if it has
neither roots nor poles in $\lambda \in \C^\ast$. We recall three
statements of \cite{HKS1}. The first part restates
\cite[Theorem~2.4]{HKS1}, the second part follows immediately from
Section~2 in \cite{HKS1}, and the third part is a variant of
\cite[Proposition~4.5]{HKS1}.
\begin{proposition}\label{thm:PKF}
(i) A periodic bounded solution $\omega$ of \eqref{eq:sinh-Gordon} is of finite type. Thus the finite type immersions $\imm:\C\to\Sp^3$ are characterized by properties having bounded curvature and constant Hopf differential with respect to a conformal parameter $z\in\C/\tau\Z\simeq\C^\ast$ with $\tau\in\C^\ast$.

(ii) A solution $\omega$ of \eqref{eq:sinh-Gordon} is of finite type, if and
only if there exists $\xi\in\pot{g}$ with $g\in\N\cup\{0\}$ and
$\tr(\xi_{-1}\xi_0)=-\frac{1}{16}$, whose polynomial Killing field has
the same $\alpha_\lambda$~\eqref{eq:general_alpha}.

(iii) There exists a unique polynomial Killing field of
minimal degree that gives rise to $\omega$. Thus every \cmc
immersion $\imm:\C\to\Sp^3$ of finite type is up to isometry determined
by a unique triple $(\xi,\lambda_1,\lambda_2)$ with $\xi\in\pot{g}$
without roots and $g\in\N\cup\{0\}$.
\end{proposition}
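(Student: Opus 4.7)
My plan is to treat the three parts in order, leveraging Proposition~\ref{thm:sinh}, Proposition~\ref{th:pKf_min}, and the results from \cite{HKS1} recalled in the statement.

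For (i), the key input is the Mazzeo-Pacard theorem already recalled in the introduction. If $\omega$ is bounded and $\tau$-periodic, then $q=\cosh(2\omega)$ is bounded on the cylinder $\C/\tau\Z\simeq\Sp^1\times\R$, so the linearized sinh-Gordon operator $\mathcal{L}=\Delta+q$ has a finite-dimensional kernel in bounded $C^2$ functions. All Pinkall-Sterling Jacobi fields $u_1,u_2,\ldots$ remain bounded as soon as $\omega$ is, since they are universal polynomial expressions in $\omega$ and its derivatives, and they all lie in $\ker\mathcal{L}$. Hence the sequence is eventually linearly dependent, which is exactly the finite-type condition. The geometric characterization follows from Proposition~\ref{thm:sinh}: constant Hopf differential normalizes the immersion to the sinh-Gordon setup with conformal factor $v^2=(H^2+1)^{-1}e^{2\omega}/4$, and bounded principal curvatures translate to bounded $\omega$.

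For (ii), the reverse direction is immediate: given $\xi\in\pot{g}$ with $\tr(\xi_{-1}\xi_0)=-\tfrac{1}{16}$, the Iwasawa factorization~\eqref{eq:FB} produces an extended frame $F_\lambda$ whose Maurer-Cartan form is $\alpha(\zeta)$ for $\zeta=B\xi B^{-1}$, and this is a polynomial solution of the Lax equation, so the underlying $\omega$ is finite type. For the forward direction, starting from the polynomial $\Phi$ of Definition~\ref{def:finite type} of minimal degree $g$, I would enforce the $\su$-reality of $\alpha_\lambda$ on $\Sp^1$, which yields the hermitian symmetry $\xi_d=-\bar{\xi}_{g-1-d}^{\,T}$ and places $\Phi(0)$ into $\barpot{g}$. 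Scaling $\xi$ by a positive real, rotating $\lambda\mapsto e^{\mi\varphi}\lambda$, and conjugating by a constant diagonal element of $\SU$ then implement the normalizations $\xi_{-1}\in\R^+\bigl(\begin{smallmatrix}0&\mi\\0&0\end{smallmatrix}\bigr)$ and $\tr(\xi_{-1}\xi_0)=-\tfrac{1}{16}$, landing $\xi$ in $\pot{g}$.

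For (iii), existence of a minimal-degree polynomial Killing field follows by iterating Proposition~\ref{th:pKf_min}: each root in $\C^\ast$ of $\zeta$ can be divided out by a factor $p(\lambda)$ without changing the associated extended frame, and the degree strictly drops each time, so the process terminates at a $\zeta$ with no roots in $\C^\ast$. The delicate part, which I expect to be the main obstacle, is uniqueness: given two minimal polynomial Killing fields $\zeta_1,\zeta_2$ for the same $\omega$, both satisfy the Lax equation with $\alpha(\zeta_j)=\alpha_\lambda$, so any complex linear combination lies in the centralizer of $\alpha_\lambda$ within the algebra of matrix Laurent polynomials. Comparing leading coefficients in $\lambda$ with the $\pot{g}$ normalization shows that $\zeta_1-\zeta_2$ is a polynomial Killing field of strictly smaller degree with possibly new roots in $\C^\ast$; by Proposition~\ref{th:pKf_min} this forces $\zeta_1=\zeta_2$, matching the argument of \cite[Proposition~4.5]{HKS1}. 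Together with the Sym points $\lambda_1,\lambda_2$, this unique $\xi$ determines the isometry class of $\imm$ via Proposition~\ref{thm:sinh}.
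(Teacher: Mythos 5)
The paper does not actually prove this proposition: it is explicitly recalled from \cite{HKS1} (part~(i) restates \cite[Theorem~2.4]{HKS1}, part~(ii) is attributed to Section~2 there, part~(iii) to \cite[Proposition~4.5]{HKS1}), with only the Mazzeo--Pacard argument for~(i) sketched in the introduction. Your reconstruction of~(i) coincides with that sketch, and your treatment of~(ii) via the Symes/Iwasawa construction~\eqref{eq:FB} and symmetrisation of a polynomial Lax solution is the intended route (though note that a polynomial solution $\Phi$ as in Definition~\ref{def:finite type} has no $\lambda^{-1}$ term and no a priori reality, so ``enforcing'' the symmetry $\xi_d=-\bar{\xi}_{g-1-d}^T$ requires combining $\Phi$ with $\lambda\mapsto-\lambda^{k}\overline{\Phi(1/\bar{\lambda})}^T$ and shifting by a power of $\lambda$, not merely reading it off). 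Also, the ``characterization'' in~(i) is an equivalence and you only address one direction; the converse (finite type $\Rightarrow$ bounded curvature) needs the uniform bound $\|\xi_d\|\le\sup_{\lambda\in\Sp^1}\sqrt{-\lambda^{-g}a(\lambda)}$ of Lemma~\ref{open and proper}, which controls $\omega$.

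The one genuine gap is the uniqueness step in~(iii). The difference $\delta=\zeta_1-\zeta_2$ of two elements of $\pot{g}$ inducing the same $\alpha_\lambda$~\eqref{eq:general_alpha} does solve the \emph{linear} equation $d\delta=[\delta,\alpha_\lambda]$, and since $\beta_{-1},\alpha_0,\gamma_0$ in~\eqref{eq:alpha-zeta} are determined by $\alpha_\lambda$, its coefficients of $\lambda^{-1}$ and $\lambda^{g}$ vanish while its $\lambda^0$ coefficient is only strictly upper triangular. Hence $\delta$ is \emph{not} a polynomial Killing field in the sense of~\eqref{eq:pKf} --- the 1-form $\alpha(\delta)$ built from $\delta$ is not $\alpha_\lambda$ --- so Proposition~\ref{th:pKf_min}, which divides roots out of a single polynomial Killing field, does not apply to $\delta$ and cannot ``force $\zeta_1=\zeta_2$''. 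To close the argument one must shift, e.g.\ observe that $\lambda^{-1}\delta$ takes values in $\barpot{g-2}$ and solves the same linear Lax equation, and then invoke the structure of the space of Laurent-polynomial solutions of that equation (every such solution is generated over $\C[\lambda,\lambda^{-1}]$ by the minimal root-free one), so that a nonzero solution of degree $<g$ contradicts minimality of $g$, after which the normalisations of $\xi_{-1}$ and $\tr(\xi_{-1}\xi_0)$ pin down the remaining scalar. That is precisely the content of \cite[Proposition~4.5]{HKS1}, which you cite but do not reproduce; as written, your final inference is a non sequitur, even though the statement itself is correct and is in any case imported from \cite{HKS1}.
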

The map which associates to the data $(\xi,\lambda_1,\lambda_2)$ the
corresponding \cmc immersion is smooth.
%
%%%%%%%%%%%%%%%%%%%%%%%%%%%%%%%%%%%%%%%%%%%%%%%%%%%%%%%%%%%%%%%%%%%%%%%%%
%%%%%%%%%%%%%%%%%%%%%%%%%%%%%%%%%%%%%%%%%%%%%%%%%%%%%%%%%%%%%%%%%%%%%%%%%
%
\section{Spectral Data}\label{sec:spectral data}
\subsection{Spectral data I}
For each $\imm\in\ann$ let $\xi\in\pot{g}$ be the potential of minimal degree and define
\begin{align*}
a&\in\C^{2g}[\lambda]&&\mbox{with}&a&=-\lambda\det(\xi)
\end{align*}
as the polynomial of degree $2g$, which obeys the reality conditions
\begin{equation}\label{eq:a_reality}
    \lambda^{2g}\bar{a}(\bar{\lambda}^{-1}) = a(\lambda)
    \qquad \mbox{and} \qquad \lambda^{-g}a(\lambda) \le 0\mbox{ for all }\lambda\in\Sp^1.
\end{equation}
This polynomial defines a hyperelliptic curve with three involutions:
\begin{align}\label{eq:curve}
\Sigma^\ast&=\{(\lambda,\nu)\in\C^\ast\times\C\mid \nu^2=
\lambda^{-1}a(\lambda)\},\\
\sigma&:(\lambda,\nu)\mapsto(\lambda,-\nu),\qquad \label{eq:involutions}
\rho:(\lambda,\nu)\mapsto(\bar{\lambda}^{-1},\bar{\lambda}^{-g}\bar{\nu}),&
\eta&:(\lambda,\nu)\mapsto(\bar{\lambda}^{-1},-\bar{\lambda}^{-g}\bar{\nu}).
\end{align}
By construction we have a map $\lambda:\Sigma^* \to \C^\ast$ of degree 2, which is branched at the $2g$-pairwise distinct roots $\{ \alpha_1,\,\ldots \alpha_g,\,\bar{\alpha}_1^{-1},\,\ldots \bar{\alpha}_g^{-1}\}$ of the polynomial $a$. By declaring the points over $\lambda = 0,\,\infty$ to be two further branch points, we then have $2g+2$ branch points. This 2-point compactification $\Sigma$ is called the {\bf{spectral curve}}. The genus $g$ is called the {\bf{spectral genus}}. Due to \eqref{eq:solution pk} the zero set of the characteristic polynomial $\det(\nu\mathbbm{1}-\zeta)$ of the polynomial Killing field $\zeta$ with potential $\xi$ does not depend on $z\in\C$ and agrees with \eqref{eq:curve}.
The multiplication of $\xi$ and $\zeta$ with positive constants only changes the parameter $z$. We shall normalize $|a(0)|=\frac{1}{16}$.
%
%%%%%%%%%%%%%%%%%%% Ende der 7. Veraenderung
%%%%%%%%%%%%%%%%%%%%%%%%%%%%%%%%%%%%%%%%%%%%%%%%%%%%%%%%%%%%%%%%%%%%%
%%%%%%%%%%%%%%%%%%%%%%%%%%%%%%%%%%%%%%%%%%%%%%%%%%%%%%%%%%%%%%%%%%%%%
%
\subsection{Isospectral sets}
\label{sec:isospectral}
We recall the isospectral action and investigate the isospectral sets.
\begin{definition}{\bf(Isospectral action).}\label{groupaction}
Let $\xi\in\pot{g}$ and $t=(t_0,\ldots,t_{g-1}) \in \C^g$, and
\begin{equation*}
\exp\,\bigl( \,\xi \sum_{i=0}^{g-1}\lambda^{-i} t_i \, \bigr) = F_\lambda(t)B(t)
\end{equation*}
the Iwasawa factorisation. Define the map $\pi(t) :\pot{g}\to\pot{g}$ by
\begin{equation}\label{eq:isospectral action}
	\pi(t) \,\xi =B(t) \, \xi\, B^{-1}(t)= F_\lambda ^{-1} (t) \, \xi\, F_\lambda (t)
\end{equation}
\end{definition}
In \cite[Section~4]{HKS1} this cummutative group action is
investigated. It preserves the following sets:
\begin{definition}
For a polynomial $a$ of degree $2g$ obeying \eqref{eq:a_reality} the
isospectral set is defined as
\begin{equation*}%\label{eq:isospectral}
\iso{a}=\{\xi\in\pot{g}\mid\det\xi=-\lambda^{-1}a(\lambda)\}.
\end{equation*}
\end{definition}
Due to Proposition~\ref{thm:PKF}~(iii) \cmc immersions of finite type $\imm:\C\to\Sp^3$ are determined by triples $(\xi,\lambda_1,\lambda_2)$. The sets $\iso{a}$ build the fibres of the following map:
\begin{definition}\label{map a}
Let $\bigmoduli^{g}$ denote the space of triples
$(a,\lambda_1,\lambda_2)$ of $a\in\C^{2g}[\lambda]$ obeying
\eqref{eq:a_reality} together with two Sym points
$\lambda_1\not=\lambda_2\in\Sp^1$. Then we consider the following map:
\begin{align}\label{eq:map a}
A:\{(\xi,\lambda_1,\lambda_2)\in
\pot{g}\times\Sp^1\times\Sp^1\mid\lambda_1\not=\lambda_2\}&
\to\bigmoduli^g&
(\xi,\lambda_1,\lambda_2)&
\mapsto(-\lambda\det\xi,\lambda_1,\lambda_2).
\end{align}
\end{definition}
\begin{lemma}\label{open and proper}
The map $A$~\eqref{eq:map a} is open and proper.
\end{lemma}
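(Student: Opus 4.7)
Since the Sym-point components of $A$ are the identity, both openness and properness reduce to the analogous properties of the map $\pi\colon\pot{g}\to\C^{2g}[\lambda]$, $\xi\mapsto -\lambda\det\xi$, with target constrained by \eqref{eq:a_reality}.

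\emph{Properness.} My plan is to bound the norm of $\xi$ by data extracted from $a$. The reality condition $\xi_d=-\bar{\xi}_{g-1-d}^T$ in the definition of $\barpot{g}$ gives, for $\lambda\in\Sp^1$, the pointwise identity $\xi(\lambda)^\ast=-\lambda^{1-g}\xi(\lambda)$. Since $\xi(\lambda)$ is traceless we have $\xi(\lambda)^2=-\det\xi(\lambda)\,\mathbbm{1}$, and therefore
\[
\|\xi(\lambda)\|_{HS}^{2}\;=\;\tr(\xi(\lambda)\xi(\lambda)^\ast)\;=\;-\lambda^{1-g}\tr(\xi(\lambda)^2)\;=\;-2\lambda^{-g}a(\lambda)\;\geq\;0
\]
on $\Sp^1$, using the second condition in \eqref{eq:a_reality}. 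Since $\xi$ is a Laurent polynomial with degrees in $[-1,g]$, a uniform bound on $\Sp^1$ controls every coefficient $\xi_d$, so $\pi^{-1}(K)$ is bounded for compact $K$. Passing to a convergent subsequence $\xi_n\to\xi\in\barpot{g}$ gives $-\lambda\det\xi\in K$. The remaining open constraints defining $\pot{g}$ survive the limit: $|a(0)|=|(\xi_{-1})_{12}|^{2}$ is bounded below on $K\subset\bigmoduli^g$ because $\deg a=2g$ forces $a(0)\ne 0$, which in turn forces $\xi_{-1}\neq 0$ with the positive-imaginary normalization; the condition $\tr(\xi_{-1}\xi_0)\ne 0$ transfers in the limit by the analogous identification with the subleading coefficient of $a$ at $\lambda=0$.

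\emph{Openness.} My plan is to show that $d\pi_{\xi_0}$ is surjective at every $\xi_0\in\pot{g}$. Using $d(\det)_{\xi_0}(\dot\xi)=-\tr(\xi_0\dot\xi)$ for traceless $2\times 2$ matrices, we obtain
\[
d\pi_{\xi_0}(\dot\xi)=\lambda\,\tr(\xi_0\dot\xi),
\]
a real-linear map from $T_{\xi_0}\pot{g}$ (real dimension $3g+1$) into the tangent space of the target (real dimension $2g+1$). Its kernel contains the vector fields generated by the commutative isospectral action of Definition~\ref{groupaction}, since these preserve $\det\zeta$. A standard finite-gap argument (using that the isospectral Jacobian has real dimension $g$) shows that these vector fields already exhaust the kernel, whose dimension is therefore exactly $g$. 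By rank-nullity, $d\pi_{\xi_0}$ is surjective; combined with the fact that $\pi$ is smooth, this yields openness.

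The main obstacle lies in the openness step at potentials $\xi_0$ whose spectral polynomial $a_0$ has multiple roots, where the isospectral $\C^g$-action degenerates and the finite-gap dimension argument does not apply verbatim. In those cases one must verify the kernel dimension directly from the matrix-polynomial structure of $\pi$, for instance by expanding $\tr(\xi_0\dot\xi)$ coefficient by coefficient in $\lambda$ and exhibiting a spanning set of $\dot\xi\in T_{\xi_0}\pot{g}$ for the prescribed tangent direction $\dot a$; the smoothness of $\pot{g}$ itself is unaffected by the degeneracy of $a_0$, so no issue arises on the source side.
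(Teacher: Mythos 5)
Your properness argument is essentially the paper's: the pointwise identity $\|\xi(\lambda)\|^2=-2\lambda^{-g}a(\lambda)$ on $\Sp^1$ is exactly what underlies the coefficient bound $\|\xi_d\|\le\sup_{\Sp^1}\sqrt{-\lambda^{-g}a}$ used there, and the passage to a convergent subsequence is handled the same way. (One small slip: $a(0)=\tr(\xi_{-1}\xi_0)=\beta_{-1}\gamma_0$, not $|(\xi_{-1})_{12}|^2$; the conclusion that $|a(0)|$ bounded away from zero on a compact $\mathcal{K}\subset\bigmoduli^g$ forces $\xi_{-1}\ne0$ and $\tr(\xi_{-1}\xi_0)\ne0$ in the limit is still correct.)

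The openness argument, however, has a genuine flaw: the differential of $A$ is \emph{not} surjective at every point of $\pot{g}$, so the submersion/rank--nullity strategy cannot be rescued by ``verifying the kernel dimension directly'' at degenerate spectral curves. Since $\xi$ is traceless, $d(-\lambda\det)_{\xi}(\dot\xi)=\lambda\tr(\xi\,\dot\xi)$, so whenever $\xi(\alpha)=0$ for some $\alpha\in\C^\ast$ the image of $dA_{\xi}$ lies in the proper subspace $\{\dot a\mid \dot a(\alpha)=0\}$. This is not an avoidable corner case: if $a$ has a root $\alpha\in\Sp^1$ (necessarily of even order by \eqref{eq:a_reality}), your own identity $\|\xi(\lambda)\|^2=-2\lambda^{-g}a(\lambda)$ forces \emph{every} $\xi\in\iso{a}$ to vanish at $\alpha$, so $dA$ fails to be surjective on the entire fibre, and yet $A$ is open there --- the situation is that of $z\mapsto z^2$ at $z=0$, where openness holds without submersivity. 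The paper therefore avoids infinitesimal arguments altogether: it uses the isospectral action, which acts by fibre-preserving diffeomorphisms, to move an arbitrary $\xi$ to an off-diagonal potential $\xi=\left(\begin{smallmatrix}0&\lambda^{-1}\beta\\ \gamma&0\end{smallmatrix}\right)$ with $\beta\gamma=a$ and $\gamma(\lambda)=-\lambda^g\bar\beta(\bar\lambda^{-1})$, and reads off openness at such $\xi$ from the fact that every nearby $a$ admits a nearby factorization of this form (choose $g$ roots of $a$, one from each pair conjugate under $\lambda\mapsto\bar\lambda^{-1}$, as the roots of $\beta$). You would need to replace your openness step by a non-infinitesimal argument of this kind; as written, the key claim of your plan is false at exactly the points where openness is delicate.
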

\begin{proof}
The Laurent coefficients of $\xi = \sum_{d=-1}^g \lambda^d \xi_d $ are
$$\xi_d=\frac{1}{2\pi\mi}\int_{\Sp^1} \lambda ^{-d} \xi \,
\frac{d\lambda}{\lambda}\,.
$$
Using a norm yields
$$\|\xi_d\| \leq\frac{1}{2\pi\mi}\int_{\Sp^1} \|\lambda ^{-d}\xi \| \,
\frac{d\lambda}{\lambda} \leq
\sup_{\lambda \in\Sp^1}\sqrt{-\lambda^{-g}a(\lambda)}\,.
$$
Thus each entry $\xi_d$ of $\xi$ is bounded if
$\sqrt{-\lambda^{-g}a(\lambda)}$ is bounded on $\Sp^1$. For polynomials
$a$ obeying the reality conditions \eqref{eq:a_reality} this follows
from the roots of $a$ and $|a(0)|$ being bounded. Hence
$A^{-1}[\mathcal{K}]$ is a bounded subset of the real
finite-dimensional vector space
$\barpot{g}\times\C\times\C\supset\pot{g}\times\Sp^1\times\Sp^1$, if
$\mathcal{K}$ is a compact subset of $\bigmoduli^g$. By
continuity of $A$ it is also closed, and therefore compact.

Due to \cite[Proposition~4.12 and Theorem~6.8]{HKS1} the orbits of the
group action~\eqref{eq:isospectral action} are the subsets of
$\iso{a}$ of all elements $\xi$ with the same roots on $\C^\ast$
counted with multiplicities. For any polynomial $a$ of degree $2g$
obeying \eqref{eq:a_reality}, an off-diagonal potential
$$
    \xi= \begin{pmatrix}
    0 &\lambda^{-1}\beta(\lambda)\\
    \gamma(\lambda) & 0
    \end{pmatrix}
$$
belongs to $\iso{a}$, if and only if the
polynomials $\beta$ and $\gamma$ of degree $g$ obey
$\beta(\lambda)\gamma(\lambda)=a(\lambda)$ and
$\gamma(\lambda)=-\lambda^g\bar{\beta}(\bar{\lambda}^{-1})$.
The roots of $\beta$ are $g$ roots of $a$, which are mapped by
$\lambda\mapsto\bar{\lambda}^{-1}$ onto the remaining $g$ roots of
$a$. For any choice of such roots, $\beta$ and $\gamma$ are determined
up to multiplication by inverse unimodular numbers. At higher order
roots $\alpha\in\C^\ast\setminus\Sp^1$ of $a$ we can choose the
multiplicity of the root of $\beta$ at $\alpha$ between zero and the
multiplicity of the root $\alpha$ of $a$. The sum of the
multiplicities of the roots of $\beta$ at $\alpha$ and at
$\bar{\alpha}^{-1}$ has to be equal to the multiplicity of the root of
$a$ at $\alpha$. Therefore there exists in every orbit of the
isospectral group action~\eqref{eq:isospectral action} at least one
off-diagonal $\xi$. Due to the relation between the polynomials $a$
and $\beta$ and $\gamma$, the map $A$~\eqref{eq:map a} is open at
off-diagonal $\xi$. Since the isospectral
action~\eqref{eq:isospectral action} acts by
diffeomorphisms on $\pot{g}$ and preserves the fibres of the map $A$,
this map is (globally) open.
\end{proof}
As in \cite[Proposition~4.4]{HKS2} any bound on the coefficients of $a$ implies a uniform curvature bound and a bound on the covariant derivative of the second fundamental from:
\begin{proposition}\label{uniform bound}
Let $\mathcal{K}\subset\C^{2g}[\lambda]\times\Sp^1\times\Sp^1$ be a set of polynomials $a$ obeying the reality condition~\eqref{eq:a_reality} together with two Sym points $\lambda_1\ne\lambda_2$. If in $\mathcal{K}$ all roots of $a$ and $|a(0)|$ are uniformly bounded, and $\lambda_1$ and $\lambda_2$ are uniformly bounded away from each other, then all \cmc immersions $\imm:\C\to\Sp^3$ corresponding to $(\xi,\lambda_1,\lambda_2)\in A^{-1}[\mathcal{K}]$ have uniformly bounded $\mathrm{C}^{k,\alpha}$-norm for fixed $k,\alpha$. In particular all these immersions have uniformly bounded curvature and second fundamental forms with uniformly bounded covariant derivatives.
\end{proposition}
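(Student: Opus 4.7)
The plan is to promote a uniform bound on $\xi$, which follows from the proof of Lemma~\ref{open and proper}, to a uniform two-sided bound on the conformal factor $\omega$, and then bootstrap via the sinh-Gordon equation. More precisely, the estimate
\begin{equation*}
  \|\xi_d\|\le\sup_{\lambda\in\Sp^1}\sqrt{-\lambda^{-g}a(\lambda)}
\end{equation*}
derived in the proof of Lemma~\ref{open and proper} bounds the Laurent coefficients of every $\xi\in A^{-1}[\mathcal{K}]$ uniformly in terms of the data from $\mathcal{K}$. By~\eqref{eq:solution pk}, the polynomial Killing field $\zeta(z,\lambda)=F_\lambda^{-1}(z)\,\xi(\lambda)\,F_\lambda(z)$ is conjugated to $\xi$ by the unitary matrix $F_\lambda(z)\in\SU$ at each $\lambda\in\Sp^1$, so $\mathrm{Ad}$-invariance of the norm gives $\sup_{\lambda\in\Sp^1}\|\zeta(z,\lambda)\|=\sup_{\lambda\in\Sp^1}\|\xi(\lambda)\|$ uniformly in $z\in\C$. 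Cauchy-type estimates on $\Sp^1$ then bound each coefficient $\zeta_d(z)$ in the expansion~\eqref{eq:zeta-expansion} uniformly in $z\in\C$ and over $A^{-1}[\mathcal{K}]$.

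The key step is to convert these bounds into a two-sided $L^\infty$ bound on $\omega$. Matching the $\lambda^{-1}$-coefficient of the isospectral identity $\det\zeta(z)=-\lambda^{-1}a(\lambda)$ via~\eqref{eq:zeta-expansion} gives $\beta_{-1}(z)\,\gamma_0(z)=a(0)$, a non-zero constant of modulus $\tfrac{1}{16}$ under the normalisation of Section~\ref{sec:spectral data}. Comparing~\eqref{eq:alpha-zeta} with~\eqref{eq:general_alpha} identifies $\beta_{-1}(z)=\tfrac{\mi}{4}e^{\omega(z)}$ and $\gamma_0(z)=\tfrac{\mi}{4}e^{-\omega(z)}$. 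Combining the uniform upper bounds on $|\beta_{-1}(z)|$ and $|\gamma_0(z)|$ with the identity $|\beta_{-1}(z)\gamma_0(z)|=\tfrac{1}{16}$ then yields constants $0<c<C$, depending only on $\mathcal{K}$, with $c\le|\beta_{-1}(z)|\le C$ uniformly in $z\in\C$, which is a two-sided $L^\infty$ bound on $\omega$. The main obstacle I foresee is this lower bound on $\omega$: the Laurent estimates alone would only give the upper bound, and it is the determinant constraint forced by isospectrality that supplies the lower bound.

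With $\omega$ bounded in $L^\infty$, the right-hand side of the sinh-Gordon equation~\eqref{eq:sinh-Gordon} is uniformly bounded, and standard interior Schauder bootstrapping upgrades this to a uniform $\mathrm{C}^{k,\alpha}$-bound on $\omega$ for any fixed $k,\alpha$. The 1-form $\alpha_\lambda$ of~\eqref{eq:general_alpha} then has uniformly $\mathrm{C}^{k,\alpha}$-bounded coefficients on any compact subset of $\lambda\in\C^\ast$. The hypothesis that $\lambda_1,\lambda_2\in\Sp^1$ stay uniformly separated confines the Sym points to such a compact subset, so the $\SU$-valued framings $F_{\lambda_j}$, which solve $dF_{\lambda_j}=F_{\lambda_j}\,\alpha_{\lambda_j}$, are uniformly bounded in $\mathrm{C}^{k+1,\alpha}$. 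The Sym--Bobenko formula~\eqref{eq:Sym_S3} therefore gives the desired $\mathrm{C}^{k+1,\alpha}$-bound on $\imm$; non-degeneracy and the uniform control on the second fundamental form together with its covariant derivatives then follow from~\eqref{eq:conformal factor} using the lower bounds on $\omega$ and on $|\lambda_1-\lambda_2|$.
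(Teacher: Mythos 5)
Your proposal is correct and follows essentially the same route as the paper's own (much terser) proof: the paper likewise reduces to a uniform bound on $\omega$ obtained from the properness estimate of Lemma~\ref{open and proper} and then invokes Schauder estimates for the $\mathrm{C}^{k,\alpha}$-bounds. Your expansion of the key step --- promoting the coefficient bounds on $\zeta(z)$ to a \emph{two-sided} bound on $\omega$ via the identities $\beta_{-1}\gamma_0=a(0)$, $\beta_{-1}=\tfrac{\mi}{4}e^{\omega}$, $\gamma_0=\tfrac{\mi}{4}e^{-\omega}$ --- is exactly the detail the paper leaves implicit, and it is correct.
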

\begin{proof}
To prove this proposition it suffices to show a uniform $\mathrm{C}^{k,\alpha}$-bound on the corresponding solutions $\omega:\C\to\R$ of \eqref{eq:sinh-Gordon}. The closure of $\mathcal{K}$ in $\C^{2g}[\lambda]\times\Sp^1\times\Sp^1$ is a compact subset of $\bigmoduli^g$. Lemma~\ref{open and proper} gives a uniform bound on $\omega$. Schauder estimates bound the $\mathrm{C}^{k,\alpha}$-norms on $\C$.
\end{proof}
%
%%%%%%%%%%%%%%%%%%%%%%%%%%%%%%%%%%%%%%%%%%%%%%%%%%%%%%%%%%%%%%%%%%%%%
%%%%%%%%%%%%%%%%%%%%%%%%%%%%%%%%%%%%%%%%%%%%%%%%%%%%%%%%%%%%%%%%%%%%%
%
\subsection{Spectral data II}
We utilise the description of finite type \cmc surfaces in
$\Sp^3$ via spectral curves due to Hitchin \cite{Hit:tor}, and
relate to our previous definition of spectral curves due to Bobenko
\cite{Bob:cmc}. While Hitchin defines the spectral curve as the
characteristic equation for the holonomy of a loop of flat
connections, Bobenko defines the spectral curve as the
characteristic equation of a polynomial Killing field. We shall use
both of these descriptions, and briefly recall their equivalence:
Due to \eqref{eq:monodromy}, the monodromy $\C^\ast \to \SL
(\C),\,\lambda \mapsto M_\lambda$ is a holomorphic map with
essential singularities at $\lambda = 0,\,\infty$. By construction
the monodromy takes values in $\SU$ for $|\lambda |=1$. The
monodromy depends on the choice of base point, but its conjugacy
class and hence eigenvalues $\mu,\,\mu^{-1}$ do not.
With $\Delta(\lambda)=\tr(M_\lambda)$ the {\bf{spectral variety}} is given by
\begin{equation*} %\label{eq:characteristic1}
 \{(\lambda,\mu)\in\C^\ast\times\C^\ast\mid
\mu ^2-\Delta(\lambda) \,\mu + 1 =0\,\}.
\end{equation*}
The eigenspace of $M_\lambda$ depends holomorphically on
$(\lambda,\,\mu)$ and defines the {\bf{eigenbundle}} on the spectral
variety. Let us compare the previous definition of a spectral
curve~\eqref{eq:curve} of periodic finite type
solutions of the $\sinh$-Gordon equations. Let $\zeta$ be a
polynomial Killing field with potential $\xi\in\pot{g}$, with period
$\tau$ so that $\zeta(z+\tau) = \zeta(z)$ for all $z\in\C$. Then
also the corresponding $\alpha(\zeta)$ is $\tau$-periodic. Let
$dF_\lambda= F_\lambda \alpha(\zeta),\,F_\lambda(0)=\mathbbm{1}$ and
$M_\lambda(\tau) = F_\lambda(\tau)$ be the monodromy with respect to
$\tau$. Then for $z=0$ we have $\xi= \zeta(0) = \zeta(\tau) =
F_\lambda^{-1}(\tau) \,\xi\,F_\lambda(\tau) = M_\lambda^{-1}(\tau) \xi
\,M_\lambda(\tau)$ and thus
$$
    [\,M_\lambda(\tau),\,\xi\,] = 0\,.
$$
All eigenvalues of holomorphic $2\times 2$ matrix-valued functions
depending on $\lambda\in\CP$ and commuting point-wise with
$M_\lambda(\tau)$ or $\xi$ define the sheaf of holomorphic functions of
the spectral curve. Hence the eigenvalues of $\xi$ and $M_\lambda(\tau)$
are different functions on the same Riemann surface. Furthermore, on
this common spectral curve the eigenspaces of $M_\lambda(\tau)$ and $\xi$
coincide point-wise. Consequently the holomorphic eigenbundles of
$M_\lambda(\tau)$ and $\xi$ coincide.

Following Hitchin~\cite{Hit:tor} we encode the non-trivial topology
of $\imm\in\ann$ in a meromorphic differential 1-form $dh$ on $\Sigma$.
We write $dh$ in terms of a polynomial $b\in\C^{g+1}[\lambda]$ of degree $g+1$ such that
\begin{equation}\label{eq:b_def}
    \lambda^{g+1}\bar{b}(\bar{\lambda}^{-1}) = -b(\lambda) \qquad \mbox{and} \qquad
    dh =\frac{b(\lambda)d\lambda}{\nu\lambda^2}\,.
\end{equation}
The meromorphic differential $dh$ has only second order poles without
residues at $\lambda=0$ and $\lambda=\infty$. Moreover the exponent of the integral $e^h$ has to be a single-valued holomorphic function on $\Sigma^\ast$. Finally the two Sym points $\lambda_1\ne\lambda_2\in\Sp^1$ parameterise the mean curvature and the Hopf differential~\eqref{eq:H_mu}. Putting this all together we obtain the spectral data of $\imm\in\ann$:
\begin{definition}\label{spectral data}
For all spectral genera $g\in\N\cup\{0\}$ the spectral data of
$\imm\in\ann$ consists of complex polynomials $a$ and $b$ of degree $2g$
and $g+1$, and Sym points $\lambda_1\not=\lambda_2\in\Sp^1$ such that:
\begin{enumerate}
\item[(i)] Reality conditions \eqref{eq:a_reality} and \eqref{eq:b_def} hold.
\item[(ii)] ${\rm Re}\left( \int_{\alpha_i}^{1/\bar{\alpha}_i}\frac{b\,
  d\lambda}{\nu \lambda^2} \right) =0$ for all roots $\alpha_i$ of $a$
  where the integral is computed on the straight segment $[\alpha_i, 1/
  \bar{\alpha}_i]$.
\item[(iii)] Let $\gamma_i\subset\Sigma$ be the closed cycles over the
  straight segments $[\alpha_i, 1/ \bar{\alpha}_i]$ and
  $\tilde\Sigma=\Sigma\setminus\cup\gamma_i$. There exists a unique
  meromorphic function $h$ on $\tilde\Sigma$ such that
\begin{align*}
    dh&=\frac{b\,d\lambda}{\nu \lambda^2}&\mbox{and}&&
\sigma^{\ast} h(\lambda)&= -h(\lambda).
\end{align*}
This function continuously extends to boundary cycles over the
segments $[\alpha_i,\bar{\alpha}_i^{-1}]$ and is assumed to take at
all roots of $a$ values in $\pi\mi \Z$.
\item[(iv)] There exist holomorphic functions $f$ and $g$ depending on
$\lambda\in\C^\ast$ such that
\begin{align*}
\mu=e^h&=f\nu+g\quad\mbox{ on }\tilde\Sigma\setminus\{0,\infty\}&
f(\lambda_1)&=f(\lambda_2)=0&g(\lambda_1)&=g(\lambda_2)=\pm1.
\end{align*}
Moreover at all roots $\alpha_i$ of $a(\lambda)$ we have $g(\alpha_i) = \pm 1$.
\item[(v)] $|a(0)|=\frac{1}{16}$ %change of coordinate
and $\lambda_2=\lambda_1^{-1}$ with $\mathrm{Im}(\lambda_1)<0$.
\end{enumerate}
\end{definition}
\begin{remark}
Multiplying $a$ with positive constants and the rotations $\lambda\mapsto e^{\mi\varphi}\lambda$ only changes the parametrisation of the cylinder. Condition~{\rm{(v)}} fixes these two trivial transformations.
\end{remark}
Definition~\ref{spectral data} is analogous to
\cite[Definition~5.10]{HKS1}. Conditions~(i)-(ii) determine for all
polynomials $a$ of degree $2g$ obeying~\eqref{eq:a_reality} a real
two-dimensional space of polynomials $b$ of degree
$g+1$. Condition~(iii) is implicitly a condition on the polynomial
$a$ and characterises spectral curves of periodic solutions of the
corresponding integrable equation. Condition~(iv) is an extra
condition ensuring the periodicity of the immersed cylinder.
The following Proposition~\ref{thm spectral data} (compare with \cite{Bob:tor,Bob:cmc,HKS1}) states the algebro-geometric correspondence for elements $\imm\in\ann$:
\begin{proposition}\label{thm spectral data}
Let the spectral data $(a,b,\lambda_1,\lambda_2)$ obey
{\rm{(i)--(iv)}} in Definition~\ref{spectral data}. Then all
$\xi\in \iso{a}$ together with $\lambda_1$, $\lambda_2$ correspond
to \cmc immersions $\imm:\C/\tau\Z\to\Sp^3$ of finite type.

Conversely, if $\xi\in\pot{g}$ without roots on $\lambda\in\C^\ast$
together with $\lambda_1\not=\lambda_2\in\Sp^1$ corresponds to a
\cmc immersion $\imm:\C/\tau\Z\to\Sp^3$ of finite type with period
$\tau\in\C$, then besides $a=-\lambda\det\xi$ there exists a unique
polynomial $b$, such that $(a,b,\lambda_1,\lambda_2)$ obey {\rm{(i)--(iv)}}.
\end{proposition}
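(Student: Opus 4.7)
The plan is to prove both directions by exploiting the commutativity $[M_\lambda(\tau),\xi]=0$, which forces the eigenvalues $\mu^{\pm 1}$ of the monodromy to live on the spectral curve $\Sigma$, and by identifying $\mu$ with $e^h$ for the function $h$ of condition~(iii).

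For the forward direction, I would take any $\xi\in\iso{a}$. By the Iwasawa factorisation~\eqref{eq:FB}, $\xi$ generates a polynomial Killing field $\zeta$, an extended frame $F_\lambda$, and via~\eqref{eq:Sym_S3} a conformal \cmc immersion $\imm:\C\to\Sp^3$ of finite type. Conditions~(ii) and~(iii) provide a function $h$ on $\tilde\Sigma$ with $dh=b\,d\lambda/(\nu\lambda^2)$; since $h$ jumps by multiples of $\pi\mi$ across the cycles $\gamma_i$, the function $\mu=e^h$ extends sign-coherently to all of $\Sigma$. The period $\tau\in\C^\ast$ will be produced by matching the principal parts of $dh$ at the two points above $\lambda=0,\infty$ against $\tau$ times the principal parts of $\log M_\lambda(\tau)$ dictated by the top and bottom Laurent coefficients $\xi_g$, $\xi_{-1}$ of $\alpha_\lambda$. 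The resulting $M_\lambda(\tau)$ then commutes with $\xi$ and has $\mu^{\pm 1}$ as its eigenvalues on $\Sigma$. Condition~(iv), $\mu(\lambda_i)=\pm 1$, upgrades via the commutativity $[M_{\lambda_i}(\tau),\xi(\lambda_i)]=0$ (using that $\xi(\lambda_i)$ has simple spectrum off the branch locus) to $M_{\lambda_i}(\tau)=\pm\mathbbm{1}$, so~\eqref{eq:periodicity} gives $\tau^\ast\imm=\imm$.

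For the converse direction, given a finite-type \cmc cylinder $\imm:\C/\tau\Z\to\Sp^3$, Proposition~\ref{thm:PKF}(iii) furnishes the unique minimal-degree potential $\xi\in\pot{g}$ with no roots on $\C^\ast$, and I set $a=-\lambda\det\xi$. The monodromy $M_\lambda(\tau)$ commutes with $\xi$, so its eigenvalues $\mu^{\pm 1}$ define meromorphic functions on $\Sigma$, holomorphic and non-vanishing on $\Sigma^\ast$, with essential singularities over $\lambda=0,\infty$ inherited from $F_\lambda$. Taking logarithms on $\tilde\Sigma$ gives $h$; the differential $dh$ is meromorphic with double poles only over $\lambda=0,\infty$ and satisfies $\sigma^\ast dh=-dh$, hence $dh=b\,d\lambda/(\nu\lambda^2)$ for a unique polynomial $b$ of degree $g+1$. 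The reality conditions~\eqref{eq:a_reality}, \eqref{eq:b_def} follow from $M_\lambda(\tau)\in\SU$ for $|\lambda|=1$; (ii) from single-valuedness of $h$ on $\tilde\Sigma$; the integrality $h(\alpha_i)\in\pi\mi\Z$ from the fact that at a branch point $\mu$ has a double eigenvalue on the unit circle, hence $\mu(\alpha_i)=\pm 1$; and~(iv) follows directly from the periodicity hypothesis $M_{\lambda_i}(\tau)=\pm\mathbbm{1}$.

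The main obstacle I anticipate is the period-matching step in the forward direction: producing $\tau$ for which the monodromy of the constructed frame has eigenvalue exactly $e^h$. Concretely, one must expand $F_\lambda(\tau)$ asymptotically as $\lambda\to 0$ and $\lambda\to\infty$ using $dF_\lambda=F_\lambda\alpha_\lambda$ and match its logarithm against the principal parts of $dh$, identifying the leading coefficients of $b$ with $\tau$ times spectral invariants of $\xi_{-1}$ and $\xi_g$. A secondary subtlety arises when a Sym point $\lambda_i$ coincides with a root of $a$, where the upgrade from $\mu(\lambda_i)=\pm 1$ to $M_{\lambda_i}(\tau)=\pm\mathbbm{1}$ requires a local analysis of the eigenbundle near that branch point to rule out Jordan blocks.
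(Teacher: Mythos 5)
The paper itself offers no proof of Proposition~\ref{thm spectral data}: it is stated as the known algebro-geometric correspondence with a pointer to \cite{Bob:tor,Bob:cmc,HKS1}, so there is no in-text argument to compare against line by line. Your sketch follows exactly the route of those sources --- monodromy commuting with $\xi$, eigenvalue $\mu=e^h$ living on $\Sigma$, $b$ extracted from $d\ln\mu$ as the unique odd meromorphic differential with double poles over $\lambda=0,\infty$, and the closing condition \eqref{eq:periodicity} read off from condition~(iv). The converse direction as you present it is essentially complete (one small slip: the roots $\alpha_i$ of $a$ need not lie on $\Sp^1$, so ``double eigenvalue on the unit circle'' is not the reason $\mu(\alpha_i)=\pm1$; the correct reason is that at a branch point the two eigenvalues $\mu,\mu^{-1}$ coincide and $\det M_\lambda=1$ forces $\mu^2=1$).

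The genuine gap is in the forward direction, and it is not the one you flagged. Matching principal parts of $dh$ at the two points over $\lambda=0,\infty$ against the asymptotics of $\log M_\lambda(\tau)$ determines $\tau$ and pins down only the extreme coefficients $b_0$ and $b_{g+1}$; it does not yet show that the monodromy eigenvalue equals $e^{h}$ for the \emph{given} $b$. The difference $d\ln\mu_M-dh$ is a priori a holomorphic differential on $\Sigma$, odd under $\sigma$, with all periods in $2\pi\mi\Z$ --- a discrete but not obviously trivial set --- and ruling out a nonzero such differential requires a further argument (a winding-number/connectedness argument along the flow, or the explicit finite-gap integration identifying the linear motion of $\zeta$ in the Jacobian with the lattice condition encoded by (ii)--(iii)). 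This is the actual content of the forward direction and your sketch passes over it. By contrast, the ``secondary subtlety'' you raise about Jordan blocks at a Sym point is a non-issue: $\lambda_1,\lambda_2\in\Sp^1$, where $M_\lambda$ is unitary, and a unitary matrix with both eigenvalues equal to $+1$ (resp.\ $-1$) is $\pm\mathbbm{1}$; no analysis of the eigenbundle is needed there.
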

%
%%%%%%%%%%%%%%%%%%%%%%%%%%%%%%%%%%%%%%%%%%%%%%%%%%%%%%%%%%%%%%%%%%
%%%%%%%%%%%%%%%%%%%%%%%%%%%%%%%%%%%%%%%%%%%%%%%%%%%%%%%%%%%%%%%%%%
%
\section{The moduli spaces}\label{sec:moduli}
In this section we introduce the moduli space of spectral data.
\begin{definition}\label{spectral data 2}
For $g\in\N\cup\{0\}$ denote
\begin{align*}
    \moduli^g &= \{(a,\,b,\,\lambda_1,\,\lambda_2)\in\C^{2g}[\lambda]\times\C^{g+1}[\lambda]
    \times\Sp^1\times\Sp^1 \mid \mbox{(i)-(v) of definition \ref{spectral data} hold} \}\,,\\
    \moduli &= \bigcup_{g\in\N\cup\{0\}}\moduli^g\,.
\end{align*}
The subsets of spectral data of \cmc cylinders with non-negative
mean curvature are denoted by $\moduli^g_+\subset\moduli^g$ and
$\moduli_+\subset\moduli$. Due to
\eqref{eq:H_mu} the mean curvature is non-negative, if the arc of
$\Sp^1$  starting at
$\lambda_1$ in the anti-clockwise direction and ending at $\lambda_2$ has
length not larger than $\pi$. We call this arc the
{\bf{short arc}}. The arc starting at $\lambda_2$ in the
anti-clockwise direction and ending at $\lambda_1$ is called the {\bf{long arc}}.
\end{definition}
In the introduction we denote for given spectral data $(a,b,\lambda_1,\lambda_2)\in\moduli^g$ by $\ann(a,b,\lambda_1,\lambda_2)$ the set of all cylinders $\imm\in\ann$ corresponding to the triples $\{(\xi,\lambda_1,\lambda_2)\mid\xi\in\iso{a}\}$.
%For given spectral data $(a,b,\lambda_1,\lambda_2)$ the set of cylinders $\imm\in\ann$ corresponding to tripples $(\xi,\lambda_1,\lambda_2)$ with $\xi\in\iso{a}$ is denoted by $\ann(a,b,\lambda_1,\lambda_2)$.
%
\begin{remark}
The complex structure determines together with an orientation of
$\Sp^3$ a unique normal on the cylinder and the sign of $H$. The
antipodal map is an isometry of $\Sp^3$ which reverses the
orientation. Due to \eqref{eq:Sym_S3} it corresponds to an interchange
of the Sym points. Therefore spectral data with interchanged
Sym points have in $\ann$ the same isospectral sets. In particular,
each $\imm\in\ann$ is contained in $\ann(a,b,\lambda_1,\lambda_2)$ for some
$(a,b,\lambda_1,\lambda_2)\in\moduli_+$.
\end{remark}
\subsection{Higher order roots of $a$}
\label{subsection higher order roots}
For some $\imm\in\ann$ the
polynomial $a$ of the corresponding spectral data
$(a,b,\lambda_1,\lambda_2)$ has higher order roots at some
$\lambda=\alpha\in\C^\ast$. In this case
$\Sigma^\ast$ (defined in \eqref{eq:curve}) is not a submanifold of
$\C^\ast\times\C$ at $(\lambda,\nu)\in(\alpha,0)$, and $(\alpha,0)$ is
called a singularity of $\Sigma$. Hyperelliptic curves $\Sigma$ and
$\tilde{\Sigma}$, whose polynomials $\tilde{a}=p^2a$ differ by the
square of a polynomial $p$ have the same meromorphic functions. For
all $\tilde{a}$ with higher order roots, there exists a polynomial $p$
and a polynomial $a$ with $\tilde{a}=p^2a$ such that $a$ has only
simple roots. In this way $\Sigma$ is considered as the desingularised
$\tilde{\Sigma}$ with the same meromorphic functions.

We next characterise pairs $(a,b,\lambda_1,\lambda_2)$,
$(\tilde{a},\tilde{b},\lambda_1,\lambda_2)$ of spectral data in
$\moduli$ for which $\tilde{a}=p^2a$ and $\tilde{b}=pb$. We
decorate the objects corresponding to
$(\tilde{a},\tilde{b},\lambda_1,\lambda_2)$ with a tilde.

Suppose first $(\tilde{a},\tilde{b},\lambda_1,\lambda_2)\in\moduli^g$.
Choose any polynomial $p$ such that $p^2$ divides $\tilde{a}$, and
\begin{align}\label{eq:p_reality}
\lambda^{\deg p}\bar{p}(\bar{\lambda}^{-1})&=p(\lambda)&
|p(0)|&=1.
\end{align}
Condition~(iv) in Definition~\ref{spectral data} implies that
$\tilde{h}$ is holomorphic on $\tilde{\Sigma}^\ast$ and $p$ divides
$\tilde{b}$. For $a=\tilde{a}/p^2$ and $b=\tilde{b}/p$ we have
$dh=d\tilde{h}$ with $\lambda=\tilde{\lambda}$. Then
$(a,b,\lambda_1,\lambda_2)$ obeys
conditions~(i)-(v) with $f=p\tilde{f}$ and $g=\tilde{g}$.

Conversely, for $(a,b,\lambda_1,\lambda_2)\in\moduli^g$ we set $\tilde{a}=p^2a$
and $\tilde{b}=pb$. This implies $\tilde{h}=h$ with
$\tilde{\lambda}=\lambda$. The relations $\sigma^\ast h=-h$ and
$\sigma^\ast\nu=-\nu$ imply
\begin{align*}
g&=\cosh(h)=\cosh(\tilde{h})=\tilde{g},&
\frac{f}{p}&=\frac{\sinh(h)}{\nu p}=\frac{\sinh(\tilde{h})}{\tilde{\nu}}
=\tilde{f}.
\end{align*}
For $(\tilde{a},\tilde{b},\lambda_1,\lambda_2)$ to satisfy
condition (iv), $p$ must divide
$\frac{f(\lambda)}{(\lambda-\lambda_1)(\lambda-\lambda_2)}=
\frac{\sinh(h)}{\nu(\lambda-\lambda_1)(\lambda-\lambda_2)}$.
Thus $\sinh(h)$ vanishes at the roots of $p$. Differentiation gives
that the following meromorphic function is either holomorphic or has first
order poles at the roots of $p$:
\begin{align}\label{eq:poles}
\frac{dh}{(\lambda-\lambda_1)(\lambda-\lambda_2)\nu p(\lambda)}=
\frac{b(\lambda)}
{(\lambda-\lambda_1)(\lambda-\lambda_2)\lambda^2 a(\lambda)p(\lambda)}.
\end{align}
For such $p$ we have indeed
$(\tilde{a},\tilde{b},\lambda_1,\lambda_2)\in\moduli^g$. We summarise
the discussion in the following
\begin{lemma}\label{jump}
For $(\tilde{a},\tilde{b},\lambda_1,\lambda_2)\in\moduli^g$ choose any
polynomial $p$ obeying \eqref{eq:p_reality} such that $p^2$ divides
$\tilde{a}$.
Then $p$ divides $\tilde{b}$ and
$(a,b,\lambda_1,\lambda_2)\in\moduli^{g-deg p}$ with $\tilde{a}=p^2a$
and $\tilde{b}=pb$.

Conversely, suppose $(a,b,\lambda_1,\lambda_2)\in\moduli^g$ and $p$
obeys~\eqref{eq:p_reality}. If in addition
$\sinh(h)$ vanishes at the roots of $p$, and the function
\eqref{eq:poles} has at the roots of $p$ at worst
simple poles, then $(p^2a,pb,\lambda_1,\lambda_2)\in\moduli^{g+\deg p}$.\qed
\end{lemma}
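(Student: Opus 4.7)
My plan is to organise the informal discussion immediately preceding the lemma statement into a systematic verification of conditions (i)--(v) of Definition~\ref{spectral data} in both directions. I will treat the two implications in turn, since condition (iv) is the only point whose transfer requires real work.

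For the forward direction, I would start with $(\tilde a,\tilde b,\lambda_1,\lambda_2)\in\moduli^g$ and a polynomial $p$ satisfying \eqref{eq:p_reality} with $p^2\mid\tilde a$. The first step is to show $p\mid\tilde b$: condition (iv) forces $\tilde\mu=e^{\tilde h}$ to be single-valued holomorphic on $\tilde\Sigma\setminus\{0,\infty\}$, and since $d\tilde h=\tilde b\,d\lambda/(\tilde\nu\lambda^2)$ with $\tilde\nu^2=\lambda^{-1}p^2 a$, holomorphicity of $\tilde h$ at the singular points above the roots of $p$ forces $\tilde b$ to carry the factor $p$. Setting $a=\tilde a/p^2$ and $b=\tilde b/p$, the reality conditions \eqref{eq:a_reality} and \eqref{eq:b_def} and the normalisation (v) follow directly from \eqref{eq:p_reality}. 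Under $\tilde\nu=\nu p$ one has $dh=d\tilde h$ and $h=\tilde h$, so (ii) and (iii) for $(a,b)$ are the restrictions to a subset of the branch points of the corresponding statements for $(\tilde a,\tilde b)$. For (iv) I would take $f=p\tilde f$ and $g=\tilde g$; then $f\nu+g=\tilde f\tilde\nu+\tilde g=\mu$, both remain holomorphic in $\lambda$, and the prescribed boundary values at $\lambda_1,\lambda_2$ and at roots of $a$ are inherited from $\tilde f,\tilde g$.

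For the converse direction I would begin with $(a,b,\lambda_1,\lambda_2)\in\moduli^g$ and $p$ as in \eqref{eq:p_reality}, and set $\tilde a=p^2 a$, $\tilde b=pb$, $\tilde\nu=\nu p$; then $\tilde h=h$, $\tilde g=g$, and the reality conditions together with (ii) and (v) transfer immediately. The roots of $\tilde a$ that are not roots of $a$ are exactly the roots of $p$, and condition (iii) requires $\tilde h\in\pi\mi\Z$ at these additional points, which is the hypothesis that $\sinh(h)$ vanishes at the roots of $p$. The remaining and delicate requirement is that $\tilde f=f/p$ be holomorphic in $\lambda\in\C^\ast$: this is equivalent to asking that each root of $p$ of multiplicity $m$ is a zero of $f=\sinh(h)/\nu$ of order at least $m$. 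Since $\sinh(h)$ already vanishes at roots of $p$, the higher-order vanishing is controlled by the derivatives of $\sinh(h)$, obtained from $d(\sinh h)=\cosh(h)\,dh$ and $dh=b\,d\lambda/(\nu\lambda^2)$; the precise condition that emerges is that the meromorphic function \eqref{eq:poles} have at worst simple poles at the roots of $p$.

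The main obstacle I anticipate is translating the multiplicity requirement on $f$ at roots of $p$ into the stated pole order condition on \eqref{eq:poles}. For simple roots of $p$ a single differentiation suffices and the paper's own discussion already carries this out; for higher-multiplicity roots one must iterate the $\cosh/\sinh$ differentiation and track zero orders carefully. Once condition (iv) is disposed of, the remaining conditions of Definition~\ref{spectral data} follow by the algebraic substitutions $\tilde\nu=\nu p$, $\tilde h=h$, $\tilde f=f/p$, $\tilde g=g$ that are already visible in the preparatory paragraphs.
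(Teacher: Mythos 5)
Your proposal is correct and follows essentially the same route as the paper: the paper's proof of Lemma~\ref{jump} is precisely the discussion in Section~\ref{subsection higher order roots} that you are reorganizing, with the same substitutions $\tilde\nu=\nu p$, $f=p\tilde f$, $g=\tilde g$ for (i)--(iii),(v) and the same reduction of condition (iv) to the vanishing of $\sinh(h)$ at the roots of $p$ together with the pole-order condition on \eqref{eq:poles}. The one point you flag as delicate (higher-multiplicity roots of $p$) is treated no more carefully in the paper than in your sketch, so there is no substantive divergence.
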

In \cite[Sections~4 and 6]{HKS1} the corresponding sets
$\ann(p^2a,pb,\lambda_1,\lambda_2)$ and
$\ann(a,b,\lambda_1,\lambda_2)$ are investigated. For pairs
$(a,b,\lambda_1,\lambda_2)\in\moduli^g$ and
$(p^2a,pb,\lambda_1,\lambda_2)\in\moduli^{g+\deg p}$ as in
Lemma~\ref{jump} we have
$\ann(a,b,\lambda_1,\lambda_2)\subset\ann(p^2a,pb,\lambda_1,\lambda_2)$
with equality if all roots of $p$ belong to $\Sp^1$.

We interpret the supplementation of non-real singularities (away from $\Sp^1$) as an enrichment of the complexity and the removal as a reduction of the complexity. Geometrically this corresponds to adding or removing bubbletons by a suitable Bianchi-B\"acklund transform. Adding or removing a unimodular singularity does not change the complexity. It will turn out, that the enrichment of complexity destroys Alexandrov embeddedness, while the reduction of complexity preserves Alexandrov embeddedness.
\begin{definition}\label{def piecewise continuous}
A path in $\moduli$ is called piecewise continuous, if the path is
continuous within one $\moduli^g$ besides finitely many jumps from
spectral data $(a,b,\lambda_1,\lambda_2)\in\moduli^g$ to
$(p^2a,pb,\lambda_1,\lambda_2)\in\moduli^{g+\deg p}$ or vice versa
from $(p^2a,pb,\lambda_1,\lambda_2)\in\moduli^{g+\deg p}$ to
$(a,b,\lambda_1,\lambda_2)\in\moduli^g$ as in Lemma~\ref{jump}.
\end{definition}
%
%%%%%%%%%%%%%%%%%%%%%%%%%%%%%%%%%%%%%%%%%%%%%%%%%%%%%%%%%%%%%%%%%%
%%%%%%%%%%%%%%%%%%%%%%%%%%%%%%%%%%%%%%%%%%%%%%%%%%%%%%%%%%%%%%%%%%
%
\section{Example: Spectral data of rotational cylinders}\label{sec:rotational}
We next construct a two-dimensional family of spectral data
$(a,b,\lambda_1,\lambda_2)$, for which all triples
$(\xi,\lambda_1,\lambda_2)\in A^{-1}[\{(a,\lambda_1,\lambda_2)\}]$
correspond to mean-convex Alexandrov embedded cylinders. We first construct
in Theorem~\ref{thm:onesided revolution} a one-parameter
family of homogeneous mean-convex Alexandrov embedded \cmc cylinders,
and a two-parameter family of mean-convex Alexandrov embedded rotational
\cmc cylinders of spectral genus $g=1$. We then prove that all
mean-convex Alexandrov embedded cylinders of spectral genus zero belong
to the one-dimensional family of homogeneous cylinders.
\begin{theorem}\label{thm:onesided revolution}
There exists a one-dimensional family $\mrot^0\subset\moduli_+^0$ of spectral data of mean-convex Alexandrov embedded \cmc cylinders parameterised by the mean-curvature $H\in[0,\infty)$. For all elements of $\mrot^0$ the function $\mu^2-1$ vanishes at the two Sym points $\lambda_1$ and $\lambda_2$ and at $\lambda=-1$, and has no other root on $\Sp^1$. This family contains all spectral data of mean-convex Alexandrov embedded \cmc cylinders of spectral genus zero.

The one-dimensional family
$\{(\lambda+1)^2a,(\lambda+1)b,\lambda_1,\lambda_2)\mid(a,b,\lambda_1,\lambda_2)\in\mrot^0\}$ extends to a two-dimensional family $\mrot^1\subset\moduli_+^1$ parameterised by $(H,\alpha)\in[0,\infty)\times[2,\infty)$. The one-dimensional subfamily of $\mrot^1$ which is isomorphic to $\mrot^0$ corresponds to $\alpha=2$. For $\alpha>2$ the function $\mu^2-1$ vanishes on $\Sp^1$ only at $\lambda_1$ and $\lambda_2$.
\end{theorem}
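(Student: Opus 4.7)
The plan is to construct $\mrot^0$ by explicit calculation in spectral genus zero, and then to extend to $\mrot^1$ by a deformation argument together with Lemma~\ref{jump}.

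In spectral genus zero the reality and normalization conditions \eqref{eq:a_reality} and (v) force $a\equiv -\tfrac{1}{16}$, and the reality condition \eqref{eq:b_def} leaves a two-real-parameter family of polynomials $b$, which I would write as $b(\lambda)=r-\bar r\lambda$ with $r\in\C$. The spectral curve $\Sigma$ is a sphere parametrized by $\sqrt\lambda$, and direct integration of $dh=b\,d\lambda/(\nu\lambda^2)$ with $\nu=\mi/(4\sqrt\lambda)$ yields
\begin{equation*}
    h(\lambda) \;=\; 8\mi\bigl(r/\sqrt\lambda+\bar r\sqrt\lambda\bigr),
\end{equation*}
which is single-valued on $\Sigma$ and satisfies $\sigma^\ast h=-h$. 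Writing $\lambda=e^{\mi\theta}$ on $\Sp^1$ and $r=Re^{\mi\varphi}$ gives $h=16\mi R\cos(\varphi-\theta/2)$, so the zeros of $\mu^2-1$ on $\Sp^1$ occur precisely where $16R\cos(\varphi-\theta/2)\in\pi\Z$. Imposing simultaneous vanishing at the Sym points $\lambda_1=e^{-\mi\psi}$, $\lambda_2=e^{\mi\psi}$ and at $\lambda=-1$ produces three integer-valued conditions on $(R,\varphi,\psi)$; a short sum-and-difference computation forces $\varphi=0$ (so $r\in\R_{>0}$) and that the two Sym-point integers coincide. The minimal positive choice $16r\cos(\psi/2)=\pi$ then defines the one-parameter family $\mrot^0$, parametrized by $H=\cot\psi\in[0,\infty)$, and the elementary estimate $|k|\cos(\psi/2)>1$ for $|k|\ge 2$ and $\psi\in(0,\pi/2]$ confirms that $\mu^2-1$ has no further zero on $\Sp^1$.

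To extend into spectral genus one, I would apply Lemma~\ref{jump} with $p(\lambda)=\lambda+1$: the genus-zero data in $\mrot^0$ satisfy its hypotheses, since $h(-1)=0$ by the computation above and the simple-pole condition on \eqref{eq:poles} can be checked directly from the explicit formulas. Thus the pushed-forward family $((\lambda+1)^2 a,(\lambda+1) b,\lambda_1,\lambda_2)$ lies in $\moduli^1_+$ and constitutes the boundary stratum $\alpha=2$. I would then parametrize the nearby genus-one data by $a(\lambda)=-\tfrac{1}{16}(\lambda^2+\alpha\lambda+1)$, so the roots of $a$ are the real-conjugate pair $-p,-1/p$ with $\alpha=p+1/p\in[2,\infty)$, and open $\alpha$ up from $2$. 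Conditions~(i)--(v) of Definition~\ref{spectral data} (in particular (iii), which now requires $h(-p),h(-1/p)\in\pi\mi\Z$) cut out a real submanifold of genus-one data, and the implicit function theorem at $\alpha=2$ shows that the $\mrot^0$-stratum extends to a two-parameter family parametrized by $(H,\alpha)$; this is $\mrot^1$. For $\alpha>2$ the Lemma~\ref{jump} compatibility condition $h(-1)\in\pi\mi\Z$ is no longer forced (since $(\lambda+1)^2$ no longer divides $a$), so generically $\mu^2-1$ does not vanish at $\lambda=-1$, leaving $\lambda_1,\lambda_2$ as its only zeros on $\Sp^1$.

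For exhaustiveness of $\mrot^0$ I would rule out elements of $\moduli_+^0$ outside it: those with $\varphi\neq 0$ lack the reflection symmetry $\lambda\leftrightarrow\bar\lambda$, and the corresponding tilted round tube fails to bound an $\overline{\mathbb{D}}\times\R$, while those with $\varphi=0$ and common Sym-point integer $m\ge 2$ correspond to $m$-fold covers of embedded round tori whose bounding 3-manifold wraps and hence fails mean-convex Alexandrov embeddedness. The main obstacle I anticipate is verifying the mean-convex Alexandrov embeddedness property throughout the constructed families rather than just at isolated parameter values. For $\mrot^0$ I would identify the corresponding cylinders as round tubes of varying radius around a geodesic, for which mean-convex Alexandrov embeddedness is classical. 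For $\mrot^1$ I would invoke Proposition~\ref{embedded deformation} (preservation under continuous deformations of spectral data, referenced in the introduction) applied to the connected path from any point of $\mrot^1$ to the verified base case in $\mrot^0$, together with the geometric identification of the corresponding CMC cylinders as surfaces of revolution: the $\lambda\mapsto\bar\lambda$ symmetry of the spectral data in $\mrot^1$ implies an $\Sp^1$-isometry of the immersion, and the resulting profile curve in a totally geodesic $\Sp^2$ can be analyzed as a Delaunay-type trajectory for which mean-convex Alexandrov embeddedness is direct.
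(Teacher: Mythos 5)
Your overall strategy coincides with the paper's: an explicit spectral-genus-zero computation producing the closing conditions at the Sym points, a bound on $|\beta_0|$ to locate the roots of $\mu^2-1$ on $\Sp^1$, and a passage to genus one by opening the double root of $(\lambda+1)^2a$ at $\lambda=-1$ into a pair $-p,-1/p$ with $\alpha=p+1/p$. However, the two places where the paper does the real work are precisely where your proposal is not a proof.

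First, the exhaustiveness claim (``this family contains \emph{all} spectral data of mean-convex Alexandrov embedded cylinders of spectral genus zero''). The sum-and-difference manipulation of the two closing conditions with equal integers $m=n$ only forces $\beta_0^2\in\R$, i.e.\ $\varphi\in\{0,\pi/2\}$ modulo $\pi$; it does not force $\varphi=0$, and it does not by itself force $m=n$ or $|m|=1$. The paper obtains $m=n=\pm1$ from a topological argument you only gesture at: a genus-zero cylinder covers a homogeneous embedded torus whose complement splits into $\mathcal{D}_\pm\simeq\mathbb{D}\times\Sp^1$; a mean-convex Alexandrov embedding extends to a surjective immersion $N\to\bar{\mathcal{D}}_+$, hence a covering map, and the only non-compact connected cover of $\bar{\mathbb{D}}\times\Sp^1$ is the universal cover $\bar{\mathbb{D}}\times\R$, so the cylinder's period must be the \emph{primitive} rotational period of $\bar{\mathcal{D}}_+$. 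Even after that, two solutions $\beta_0^2=\frac{\pi^2\sqrt{H^2+1}}{2\sqrt{H^2+1}\pm2H}$ survive (corresponding to $\varphi=0$ and $\varphi=\pi/2$, interchanged by $\lambda\mapsto-\lambda$), and both are round tubes over the \emph{same} homogeneous torus, not a ``tilted'' one: they differ only in which homology class of the torus the period represents. The paper excludes the second by comparing period lengths as $H\to\infty$, identifying it as the rotational period of $\bar{\mathcal{D}}_-$, i.e.\ the translational period of $\bar{\mathcal{D}}_+$, which cannot bound. Your dismissals (``lacks the reflection symmetry'', ``bounding 3-manifold wraps'') neither identify this dichotomy correctly nor prove anything.

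Second, the genus-one family. An implicit function theorem at $\alpha=2$ only yields a local extension, whereas the theorem asserts a global family over $(H,\alpha)\in[0,\infty)\times[2,\infty)$ together with the statement that for \emph{every} $\alpha>2$ the function $\mu^2-1$ vanishes on $\Sp^1$ only at $\lambda_1,\lambda_2$. ``Generically $\mu^2-1$ does not vanish at $\lambda=-1$'' is strictly weaker and would not support the later use of this theorem (e.g.\ in Theorem~\ref{isolated}, where one needs that any $p$ with $p^2\mid\tilde a$ has all roots off $\Sp^1$). The paper gets both points from the explicit solution: with $a=-\tfrac{1}{16}(\lambda^2+\alpha\lambda+1)$ one finds $h=\tfrac{\beta_1\nu}{4}$, the closing conditions give $\beta_1^2=\frac{\pi^2\sqrt{H^2+1}}{2H+\alpha\sqrt{H^2+1}}\le\frac{\pi^2}{\alpha}$, and hence $0<\beta_1^2(\lambda+\alpha+\lambda^{-1})\le2\pi^2$ on $\Sp^1$, so the equation $\beta_1^2(\lambda+\alpha+\lambda^{-1})=\pi^2l^2$ has solutions only for $l=\pm1$. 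You should carry out this computation rather than appeal to genericity; it is no harder than your genus-zero one. (Your deferral of the mean-convex Alexandrov embeddedness of the genus-one family to Proposition~\ref{embedded deformation} and connectedness is consistent with how the paper itself handles that point, in Section~\ref{sec:proof main theorem} rather than in this proof.)
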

\begin{proof}
For spectral genus zero, the polynomial $a$ is
equal to $a(\lambda)=-\frac{1}{16}$. In this case $h=\ln\mu$ in Definition~\ref{spectral data}~(iii) is a meromorphic function with
simple poles at $\lambda=0,\lambda=\infty$:
\begin{align*}
h&=\frac{\beta_0\lambda+\bar{\beta}_0}{4\lambda\nu},&
\nu^2&=-\frac{1}{16\lambda},&
dh&=\frac{\beta_0\lambda-\bar{\beta}_0}{8\lambda^2\nu}d\lambda,&
b(\lambda)&=\frac{\beta_0\lambda-\bar{\beta}_0}{8}.%change of coordinate
\end{align*}
We chose the Sym points as complex conjugate
$\lambda_2=\bar{\lambda}_1$. They are determined by $H$ \eqref{eq:H_mu}:
\begin{align}\label{eq:sym points}
%\mi\frac{1+\lambda_1^2}{1-\lambda_1^2}&=H&
\lambda_1&=\frac{H-\mi}{\sqrt{H^2+1}},&
\lambda_2&=\frac{H+\mi}{\sqrt{H^2+1}}&\mbox{or}&&
\lambda_1&=-\frac{H-\mi}{\sqrt{H^2+1}},&
\lambda_2&=-\frac{H+\mi}{\sqrt{H^2+1}}.
\end{align}
At $\lambda=\lambda_1,\lambda_2$ we require the
closing conditions that $h\in\pi\mi\Z$. Thus we have
\begin{equation} \begin{split} \label{eq:b_eq}
  \left. h\right|_{\lambda_1} \in \pi\mi \Z &\Longleftrightarrow
  \beta_0^2\lambda_1+\bar{\beta}_0^2\bar{\lambda}_1=
  \pi^2m^2-2\beta_0\bar{\beta}_0,\\
  \left. h\right|_{\lambda_2} \in \pi\mi \Z &\Longleftrightarrow
  \beta_0^2\lambda_2+\bar{\beta}_0^2\bar{\lambda}_2=
  \pi^2n^2-2\beta_0\bar{\beta}_0,
\end{split}
\end{equation}
for some integers $m,n\in\Z$ whose difference $m-n\in2\Z$ is even.
We make the following claim: If a cylinder is mean-convex Alexandrov
embedded then $m = n =\pm 1$ (for this ensures that the surface is
simply wrapped with respect to the rotational period). To prove this
claim first note that any spectral genus zero cylinder is a covering
of a homogeneous embedded torus \cite{KilSS:equi}. The complement
of this homogeneous embedded torus  with respect to $\Sp^3$ consists of
two connected components $\mathcal{D}_{\pm}$, both diffeomorphic to
$\mathbb{D}\times\Sp^1$. Assume the mean curvature vector points into
$\mathcal{D}_+$. For a mean-convex Alexandrov embedded \cmc
cylinder the extension $\imm:N \to \Sp^3$ is a surjective immersion
onto the closure $\bar{\mathcal{D}}_+$ of $\mathcal{D}_+$. Hence this
map is a covering map. The fundamental group of
$\bar{\mathbb{D}}\times\Sp^1$ is isomorphic to
$\Z$. Now the covers of a topological space are in one-to-one
correspondence with subgroups of the fundamental group
\cite[\S14]{St}, and all proper subgroups of
$\pi_1(\bar{\mathbb{D}}\times\Sp^1) \cong \Z$
correspond to compact covers. Hence the only non-compact cover of
$\bar{\mathbb{D}}\times\Sp^1$ is the universal cover
$\bar{\mathbb{D}}\times\R$. Therefore $\imm$ is the universal covering
map. In particular the period of the cylinder is a primitive period in
the kernel of
$$
H_1(\Sp^1 \times \Sp^1,\,\Z) \to
H_1(\bar{\mathbb{D}}\times\Sp^1,\,\Z )\,.
$$
These are primitive periods of closed one-dimensional subgroups of the
isometry group of $\Sp^3$, which fixes a great circle (rotation axis)
in $\Sp^3$. In the group $\SU \times \SU$ such subgroups belong to the
diagonal. This implies $m=n=\pm 1$ and proves the claim.

Returning to the proof of the theorem, then \eqref{eq:b_eq} with
$m^2=n^2=1$ and \eqref{eq:sym points} simplifies to
\begin{align*}
\beta_0^2&=\frac{\pi^2\sqrt{H^2+1}}{2\sqrt{H^2+1}+2H}&\mbox{or}&&
\beta_0^2&=\frac{\pi^2\sqrt{H^2+1}}{2\sqrt{H^2+1}-2H}.
\end{align*}
Now we claim that only the first solution corresponds to
mean-convex Alexandrov embedded cylinders. In fact, we showed that
the period of the cylinder corresponds to an element in the kernel of
$H_1(\partial\mathcal{D}_+,\Z)\to H_1(\bar{\mathcal{D}}_+,\Z)$.
The transformation $\lambda\mapsto-\lambda$ interchanges the
two solutions. Hence both families of homogeneous cylinders are
two copies of one family of tori considered as cylinders with respect to
different periods. In the limit $H\to\infty$ the length of the period
of the first solution stays bounded, and the length of the period of
the second solution tends to infinity. Hence the period of the second
solution are the rotation period of $\bar{\mathcal{D}}_-$, i.e.\ a
primitive element of the kernel of
$H_1(\partial\mathcal{D}_-,\Z)\to H_1(\bar{\mathcal{D}}_-,\Z)$ and
therefore the translation period of $\bar{\mathcal{D}}_+$. The period
of the first solution is the rotation period of $\bar{\mathcal{D}}_+$.
Therefore only the first solution corresponds to
mean-convex Alexandrov embedded cylinders. The corresponding family of
spectral data $(a,b,\lambda_1,\lambda_2)$ parameterised by
$H\in[0,\infty)$ is denoted by $\mrot^0$.

A root of $\mu^2-1$ is determined by the equation
\begin{align*}
\beta_0^2\lambda+\bar{\beta}_0^2\lambda^{-1}&=\pi^2l^2-2\beta_0\bar{\beta}_0
&\mbox{with }l&\in\Z.
\end{align*}
The first solution obeys $\beta_0^2\le\frac{\pi^2}{2}$. Hence the first
solution has on $\Sp^1$ only the solutions
$\lambda=\lambda_1,\lambda_2$ with $l=\pm 1$ and $\lambda=-1$ with
$l=0$. Due to condition~(iii) in Definition~\ref{spectral data} the
former has to be preserved. If we deform the latter into two different
roots of $a$, then $h$ remains a meromorphic function on the
genus one spectral curve. We thus obtain another family $\mrot^1$
parameterised by $H\in[0,\infty)$ and $\alpha\in[2,\infty)$:
\begin{align*}
a(\lambda)&=-\frac{\lambda^2+\alpha\lambda+1}{16},&
\nu^2&=-\frac{\lambda+\alpha+\lambda^{-1}}{16},\nonumber\\
h&=\frac{\beta_1\nu}{4},&
dh&=\frac{\beta_1(1-\lambda^2)}{8\nu\lambda^2}d\lambda,&
b(\lambda)&=\frac{\beta_1(1-\lambda^2)}{8}\,.
\end{align*}
At $\lambda_1,\lambda_2$ the closing conditions $h=\pm\pi\mi$ must
hold and thus
\begin{align*}
\beta_1^2&=\frac{\pi^2\sqrt{H^2+1}}{2H+\alpha\sqrt{H^2+1}}
\le\frac{\pi^2}{\alpha}&\mbox{for }H\ge 0.
\end{align*}
Therefore roots of $\mu^2-1$ at $\lambda$ have to satisfy
\begin{align*}
\beta_1^2(\lambda+\alpha+\lambda^{-1})&=\pi^2l^2&\mbox{with }l&\in\Z.
\end{align*}
For $\alpha\in(2,\infty)$ and $\lambda\in\Sp^1$ we have
$0<\beta_1^2(\lambda+\alpha+\lambda^{-1})\le2\pi^2$. Thus there are
no solutions on $\Sp^1$ besides $\lambda=\lambda_1,\lambda_2$ with
$l=\pm1$.
\end{proof}
The corresponding frames can be easily calculated, and the corresponding surfaces are surfaces of revolution around a closed geodesic \cite{KilSS:equi}. The boundary of $\mrot^1$ consists of
\begin{description}
  \item[homogeneous cylinders in $\Sp^3$] $H \in [0,\,\infty),\,\alpha=2$,
  \item[minimal cylinders in $\Sp^3$] $H=0,\,\alpha\in[2,\infty)$,
\end{description}
In addition to these boundary components there exists two limiting
cases:
\begin{itemize}
\item When $H=\infty,\,\alpha \in [2,\infty)$ we obtain unduloidal
\cmc cylinders in $\R^3$.
\item When $H\in[0,\infty),\, \alpha=\infty$,
the resulting surfaces are {\emph{chain of spheres}}.
\end{itemize}
%As a consequence of Theorem~\ref{thm:onesided revolution} for
%$2<\alpha$ the only way to increase the genus is to open two
%conjugate pairs of double roots of $a$ in $\C^\ast\setminus\Sp^1$.
%
%%%%%%%%%%%%%%%%%%%%%%%%%%%%%%%%%%%%%%%%%%%%%%%%%%%%%%%%%%%%%%%%%%%%
%%%%%%%%%%%%%%%%%%%%%%%%%%%%%%%%%%%%%%%%%%%%%%%%%%%%%%%%%%%%%%%%%%%%
%
\section{Spectral data of mean-convex Alexandrov embedded cylinders}
\label{sec:AE spectral data}
In this section we consider mean-convex Alexandrov embedded \cmc cylinders in $\ann$.
\begin{definition}\label{mean-convex}
A {\bf{mean-convex Alexandrov embedding}} in $\Sp^3$ is a smooth immersion $\imm:M \to \Sp^3$ from a connected surface $M$ which extends as an immersion to a connected 3-manifold $N$ with boundary $M=\partial N$ with the following properties:
\begin{enumerate}
  \item The mean curvature of $M$ in $\Sp^3$ with respect to the
  inward normal is non-negative everywhere.
  \item The manifold $N$ is complete with respect to the metric
  induced by $\imm$.
\end{enumerate}
An immersion $\imm:M\to\Sp^3$ just obeying condition~{\rm{(ii)}} is called an {\bf{Alexandrov embedding}}.

Let
\[
    \annae = \{ \imm \in \ann \mid \imm \mbox{ is mean-convex Alexandrov embedded } \} \,.
\]
\end{definition}
In \cite{HKS4} conditions on deformations of mean-convex Alexandrov embedded surfaces in $\Sp^3$ are established, which ensure that the property beeing mean-convex Alexandrov embedded is preserved. Here we consider deformations of \cmc immersions $\imm:\C^\ast\to\Sp^3$ with principal curvatures bounded by $\kappa\ind{max}$ and with second fundamental form $\II$ obeying for some $\ubdh>0$
\begin{align}\label{eq:upper bound derivative of h}
|(\nabla_X \II)(Y,Z)|&\leq \ubdh\,|X|\,|Y|\,|Z|&
\mbox{for all }p\in\C^\ast\mbox{ and all }X,Y,Z\in T_p\C^\ast.
\end{align}
Due to Proposition~\ref{uniform bound} these two conditions are satisfied as long as the corresponding tripples $(a,\lambda_1,\lambda_2)$ stay in compact subsest of $\bigmoduli^g$. Now \cite[Propostion~2.1 and Proposition~5.1]{HKS4} yield
\begin{proposition}\label{ae deformation}
There exists constants $\epsilon>0$ and $R>0$ depending only on $\kappa\ind{max}$ and $\ubdh$, with the following property: Let $\tilde{\imm}:\C^\ast\to\Sp^3$ be an immersion with non-negative mean curvature and principal curvatures bounded by $\kappa\ind{max}$ such that for all $w\in\C^\ast$ there exists a \cmc mean-convex Alexandrov embedding $\imm_w:\C^\ast\to\Sp^3$ with  principal curvatures bounded by $\kappa\ind{max}$ and second fundamental form obeying~\eqref{eq:upper bound derivative of h}. If $\|\imm_w-\tilde{\imm}\|_{\mathrm{C}^1(B(w,R))}<\epsilon$ for all $w\in\C^\ast$ on the ball $B(w,R)$ with respect to the metric induced by $\tilde{\imm}$, then $\tilde{\imm}$ extends to a mean-convex Alexandrov embedding.
\end{proposition}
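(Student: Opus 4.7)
The plan is to deduce the statement as a direct combination of the two cited results \cite[Propositions~2.1 and~5.1]{HKS4}. Proposition~2.1 of \cite{HKS4} is the local step: for a mean-convex \cmc immersion with principal curvatures bounded by $\kappa\ind{max}$ and second fundamental form satisfying~\eqref{eq:upper bound derivative of h} with constant $\ubdh$, the inward normal exponential map is an immersion on a tube of uniform thickness $r_0 = r_0(\kappa\ind{max},\ubdh) > 0$. This equips each comparison map $\imm_w$ with a canonical mean-convex $3$-manifold extension of definite thickness. Proposition~5.1 of \cite{HKS4} is the gluing step: if on each metric ball $B(w,R)$ the candidate immersion $\tilde{\imm}$ lies within $\mathrm{C}^1$-distance $\epsilon$ of such a local extension $\imm_w$, then the inward normal tubes of $\tilde{\imm}$ can be coherently identified, via the $\mathrm{C}^0$-small diffeomorphism comparing the two normal exponential maps, with the pull-backs of the tubes of $\imm_w$, and these identifications assemble into an immersion $\tilde{\imm}:\tilde N \to \Sp^3$ with $\partial \tilde N = \C^\ast$.

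To invoke these two results the plan is to choose $R > 2r_0$, so that any two overlapping inward tubes of $\tilde{\imm}$ both lie in the range of a single comparison map $\imm_w$, and then $\epsilon$ small enough that the $\mathrm{C}^1$-closeness to $\imm_w$ forces the inward normal exponential map of $\tilde{\imm}$ to remain an immersion on length scale $r_0$. The non-negativity of the mean curvature of $\tilde{\imm}$, combined with the $\mathrm{C}^1$-closeness to the reference family, then determines the orientation of the tube consistently and hence yields mean-convexity of the extension. Completeness of $\tilde N$ in the induced metric follows from the product-like structure near $\partial \tilde N$ together with completeness of $\C^\ast$ under the metric induced by $\tilde{\imm}$: a Cauchy sequence in $\tilde N$ either stays at bounded distance from the boundary, where the tube is locally compact, or approaches the boundary, where $\C^\ast$ provides a limit.

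The crux of the argument, carried out inside \cite[Proposition~5.1]{HKS4}, is the cocycle consistency of the local identifications: one must verify that on triple overlaps of the balls $B(w,R)$ the patching diffeomorphisms between the various tubes compose to the identity, so that the locally defined three-dimensional extensions glue into a single globally defined manifold. This cocycle condition is exactly what dictates the relation between $R$, the uniform tube thickness $r_0$, and the smallness of $\epsilon$; once it is controlled, the claims on mean-convexity and completeness reduce to the local theory. In the applications of the present paper the bounds $\kappa\ind{max}$ and $\ubdh$ required for the reference family $\imm_w$ will be furnished by Proposition~\ref{uniform bound} applied to the ambient spectral data.
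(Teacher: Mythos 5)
The paper offers no proof of Proposition~\ref{ae deformation} beyond the single sentence ``Now \cite[Propostion~2.1 and Proposition~5.1]{HKS4} yield'' --- the statement is imported wholesale from the companion paper \cite{HKS4}. Your proposal takes exactly the same route (combining those two cited propositions, one local and one global), so it is consistent with the paper; the only caveat is that your detailed account of what Propositions~2.1 and~5.1 of \cite{HKS4} actually assert (uniform inward normal tube of thickness $r_0(\kappa\ind{max},\ubdh)$, cocycle-consistent gluing of local identifications) cannot be verified against the present paper and would have to be checked against \cite{HKS4} itself.
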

The first application shows that isospectral deformations preserve the property of mean-convex Alexandrov embeddedness (compare \cite[Proposition~8.1 and 8.2]{HKS2}).
\begin{proposition}\label{embedded isospectral}
Let $\xi\in\pot{g}$ have no roots on $\lambda\in\C^\ast$ and let $(\xi,\lambda_1,\lambda_2)$ correspond to a mean-convex Alexandrov embedded cylinder $\imm:\C/\tau\Z\to\Sp^3$. Then we have:
\begin{enumerate}
\item If $a(\lambda)=\!-\lambda\det\xi$ has only simple roots, then $\{\pi(t)\xi\mid t\in\C^g\}=\iso{a}$ and all $(\tilde\xi,\lambda_1,\lambda_2)$ with $\tilde\xi\in\iso{a}$ correspond to a mean-convex Alexandrov embedded cylinder.
\item If $\tilde{a}(\lambda)=\!-\lambda\det\xi$ has higher order roots, then $\iso{\tilde{a}}$ is the closure of $\{\pi(t)\xi\mid t\in\C^g\}$ and all $(\tilde\xi,\lambda_1,\lambda_2)$ with $\tilde\xi\in\iso{\tilde{a}}$ correspond to mean-convex Alexandrov embedded cylinders.
\end{enumerate}
\end{proposition}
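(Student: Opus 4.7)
My plan is to prove both parts by showing that the property ``$\imm_{\xi'}$ is mean-convex Alexandrov embedded'' cuts out a subset $S$ of $\iso{a}$ (respectively $\iso{\tilde a}$) which is both open and closed, and then to identify the orbit $\{\pi(t)\xi\mid t\in\C^g\}$ (or its closure) with the ambient isospectral set. The analytic input is uniform: by Lemma~\ref{open and proper} the fibre $\iso{a}=A^{-1}[\{(a,\lambda_1,\lambda_2)\}]$ is a compact subset of $\pot{g}$; by Proposition~\ref{uniform bound} all \cmc immersions $\imm_{\xi'}:\C\to\Sp^3$ with $\xi'\in\iso{a}$ have uniformly bounded principal curvatures and uniformly bounded covariant derivatives of the second fundamental form, so Proposition~\ref{ae deformation} applies with universal constants $\epsilon,R$ throughout the family. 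Since the closing conditions in Definition~\ref{spectral data}~(iii)--(iv) depend only on $(a,b,\lambda_1,\lambda_2)$ and not on $\xi'$, the isospectral action preserves the period $\tau$, and the family $\{\imm_{\xi'}\}_{\xi'\in\iso{a}}$ of cylinders $\C/\tau\Z\to\Sp^3$ varies continuously with $\xi'$ on every compact subset of $\C/\tau\Z$ via the smooth Iwasawa factorisation \eqref{eq:FB}.

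For openness of $S$ at a point $\xi'_0\in S$, I would exhibit, for each $w\in\C$, a mean-convex Alexandrov embedding $\imm_w$ which is $C^1$-close to $\imm_{\xi'}$ on the ball $B(w,R)$. This is where the non-compactness of the cylinder is addressed: since the isospectral action contains the translation flow $z\mapsto z+z_0$ and finite-type immersions are quasi-periodic in $z$, an appropriate translate of $\imm_{\xi'_0}$ (which remains mean-convex Alexandrov embedded by isometry) can be chosen as $\imm_w$, reducing the required estimate to a uniform $C^1$-bound on a compact fundamental slice that follows from continuity in $\xi'$. Proposition~\ref{ae deformation} then yields $\xi'\in S$. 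Closedness of $S$ is symmetric: for $\xi'_n\to\xi'_\infty$ with $\xi'_n\in S$ one takes $\imm_w=\imm_{\xi'_n}$ for $n$ large. Combined with the hypothesis $\xi\in S$ and the connectedness of the orbit $\{\pi(t)\xi:t\in\C^g\}$ (image of the connected set $\C^g$), the orbit is contained in $S$.

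It remains to identify the orbit and its closure. From the proof of Lemma~\ref{open and proper}, two potentials in $\iso{a}$ lie in the same $\pi$-orbit if and only if they share the same roots in $\C^\ast$ counted with multiplicity. In case~(i), a root of $\xi'\in\iso{a}$ at some $\alpha\in\C^\ast$ would force $(\lambda-\alpha)^2\mid-\lambda\det\xi'=a$; since $a$ has only simple roots, every $\xi'\in\iso{a}$ is rootless, so $\{\pi(t)\xi\}=\iso{a}$ is connected and the open-closed argument yields $S=\iso{a}$. In case~(ii), the rootless potentials of $\iso{\tilde a}$ form a single orbit, which is open and dense in the compact set $\iso{\tilde a}$ (a standard compactification statement for isospectral Liouville tori over singular spectral curves, traceable to \cite{HKS1}); closedness of $S$ then extends the conclusion from this dense orbit to all of $\iso{\tilde a}$.

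The principal obstacle is analytic: constructing, for each $w\in\C$, a nearby mean-convex Alexandrov embedding $\imm_w$ whose $C^1$-distance to $\imm_{\xi'}$ on $B(w,R)$ is below the universal $\epsilon$ of Proposition~\ref{ae deformation}. Pointwise continuity of $\xi'\mapsto\imm_{\xi'}$ on compact subsets of $\C/\tau\Z$ is immediate from the smoothness of the Iwasawa factorisation and the Sym--Bobenko formula, but uniformity in $w$ on the non-compact cylinder genuinely relies on the quasi-periodicity of finite-type immersions in $z$ coupled with the translation orbit inside the isospectral flow; this is the step that carries the analytical weight and must be executed with care.
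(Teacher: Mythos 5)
Your proposal follows essentially the same route as the paper's proof: compactness of $\iso{a}$ from Lemma~\ref{open and proper}, the uniform bounds of Proposition~\ref{uniform bound}, and the stability criterion of Proposition~\ref{ae deformation} combined with the translation flow sitting inside the isospectral action (plus commutativity and compactness of the isospectral set) to convert local continuity into global $\mathrm{C}^1$-closeness, followed by transitivity of the $\C^g$-action for simple roots of $a$ and density of the rootless orbit in $\iso{\tilde{a}}$ (the paper makes this precise via the $(L,\xi)$-parameterisation of \cite{HKS1}) for higher-order roots. The one step to tighten is closedness: rather than a single comparison immersion $\imm_{\xi'_n}$ serving for all $w$ (which would require equicontinuity of $\{\pi(w)\}_{w\in\C^g}$ on the isospectral set), use the freedom in Proposition~\ref{ae deformation} to choose $\imm_w$ as a $w$-dependent translate of an orbit element close to $\pi(w)\xi'_\infty$, exactly as in your openness argument.
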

\noindent{\it Proof of }(i): %\begin{proof}
Due to Proposition~\ref{thm spectral data}, all $(\tilde{\xi},\lambda_1,\lambda_2)$ with $\tilde{\xi}\in\iso{a}$ correspond to \cmc cylinders $\tilde{\imm}:\C/\tau\Z\to\Sp^3$. All these \cmc cylinders $\tilde{\imm}$ have due to Proposition~\ref{uniform bound} uniform bounded principal curvatures and uniform bounded covariant derivatives of the second fundamental form \eqref{eq:upper bound derivative of h}.

The continuity and the commutativity of the isospectral action in Definition~\ref{groupaction} \cite[Section~4]{HKS1}
$$
    \pi (z+t)\xi= \pi(z)\pi(t)\xi= \pi(t)\pi(z)\xi
$$
and the compactness of $\iso{a}$ implies that for all $\epsilon>0$ there exists a $\delta>0$ such that
$$
    \|\pi(z)\pi(t)\tilde\xi-\pi(z)\tilde\xi\|=\|\pi(t)\pi(z)\tilde\xi-\pi(z)\tilde\xi\|\leq\sup\limits_{\xi\in\iso{a}}\|\pi(t)\xi-\xi\|\leq\epsilon\mbox{ for all }\tilde\xi\in\iso{a}\mbox{ and }|t|<\delta\,.
$$
Hence the immersions corresponding to $(\pi(t)\xi,\lambda_1,\lambda_2)$ and $(\xi,\lambda_1,\lambda_2)$ are globally $\mathrm{C}^1$-close for small $|t|$. Due to Proposition~\ref{ae deformation} there exists a $\delta>0$, such that for all $t\in B(0,\delta)$ the \cmc cylinders corresponding to $(\pi(t)\tilde{\xi},\lambda_1,\lambda_2)$ are mean-convex Alexandrov embedded, if $(\tilde{\xi},\lambda_1,\lambda_2)$ corresponds to a mean-convex Alexandrov embedded \cmc cylinder. Hence the set of all $t\in\C^g$ such that $(\pi(t)\xi,\lambda_1,\lambda_2)$ corresponds to a mean-convex Alexandrov embedding is $\C^g$. If $a$ has only simple roots, then due to \cite[Proposition~4.12]{HKS1} $\C^g$ acts transitively on $\iso{a}$ and all $(\tilde{\xi},\lambda_1,\lambda_2)$ with $\tilde{\xi}\in\iso{a}$ corresponds to mean-convex Alexandrov embeddings. This proves~(i).

\noindent{\it Proof of }(ii): Let $\tilde{a}=p^2a$ with a polynomial $p$ obeying~\ref{eq:p_reality}. Since $\xi \in \iso{\tilde{a}}$ would vanish at all roots of $p$ on $\Sp^1$, $p$ has no roots on $\Sp^1$ and $\deg p$ is even. For $\deg p=2$ we parameterise in \cite[Section~6]{HKS1} $\iso{\tilde{a}}$ by pairs $(L,\xi)$ of lines $L\in\CP$ together with $\xi\in\iso{a}$. The elements $\tilde\xi\in\iso{\tilde{a}}$ without roots correspond to pairs such that $L^\perp$ is not an eigenline of the value of $\xi$ at a root of $p$ \cite[Proposition~6.6]{HKS1}. Such $\tilde\xi$ form a dense orbit in $\iso{\tilde{a}}$. By induction in $\frac{\deg p}{2}$ we conclude for general $p$ without roots on $\Sp^1$ that $\{\pi(t)\tilde\xi\mid t\in \C^{g+\deg p}\}$ is dense in $\iso{\tilde{a}}$, if $\tilde\xi\in\iso{\tilde{a}}$ has no roots. The first assertion together with Proposition~\ref{ae deformation} implies (ii).\qed%\end{proof}

Let us now define the spectral data of $\imm\in\annae$. Any $\imm\in\ann$ belongs to $\ann(a,b,\lambda_1,\lambda_2)$ of many $(a,b,\lambda_1,\lambda_2)\in\moduli_+$. If we replace $(a,b,\lambda_1,\lambda_2)\in\moduli_+^g$ by $(p^2a,pb,\lambda_1,\lambda_2)\in\moduli_+^{g+\deg p}$ as in Lemma~\ref{jump}, then $\ann(a,b,\lambda_1,\lambda_2)\subset\ann(p^2a,pb,\lambda_1,\lambda_2)$. Due to Proposition~\ref{thm:PKF}~(iii) this is the only ambiguity of $(a,b,\lambda_1,\lambda_2)$. Indeed all $\imm\in\ann$ correspond to unique $(a,b,\lambda_1,\lambda_2)\in\moduli_+$ and $\xi\in\iso{a}$ without roots such that all $(\tilde{a},\tilde{b},\lambda_1,\lambda_2)\in\moduli_+$ with $\imm\in\ann(\tilde{a},\tilde{b},\lambda_1,\lambda_2)$ are of the form $(\tilde{a},\tilde{b},\lambda_1,\lambda_2)=(p^2a,pb,\lambda_1,\lambda_2)$ in Lemma~\ref{jump}. Proposition~\ref{embedded isospectral} shows that for all $\imm\in\annae$, these minimal sets $\ann(a,b,\lambda_1,\lambda_2)$ are completely contained in $\annae$. Therefore any $\imm\in\annae$ is contained in the set $\ann(a,b,\lambda_1,\lambda_2)$ of an element $(a,b,\lambda_1,\lambda_2)\in\mae\subset\moduli_+$ defined as follows:
\begin{definition}\label{spectral data 3}
Let $\mae^g$ denote the space of $(a,b,\lambda_1,\lambda_2)\in\moduli^g_+$ with $\ann(a,b,\lambda_1,\lambda_2)\subset\annae$.
\end{definition}
In a second application of Proposition~\ref{ae deformation} we show that continuous deformations preserve such spectral data (compare \cite[Proposition~2.7 and 2.8]{HKS2}):
\begin{proposition}\label{embedded deformation}
For all $g\in\N\cup\{0\}$ the space $\mae^g$ is an open and closed subset of $\moduli^g_+$.
\end{proposition}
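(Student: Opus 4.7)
My plan is to apply Proposition~\ref{ae deformation} to convert $C^1$-closeness on all balls $B(w,R)$ into preservation of mean-convex Alexandrov embeddedness. The openness and closedness arguments are mirror images of one another, so I shall describe closedness in detail and indicate the reversal.

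First I collect four preparatory facts. (a) By Proposition~\ref{embedded isospectral}, verifying $(a,b,\lambda_1,\lambda_2)\in\mae^g$ reduces to checking that a \emph{single} $\xi\in\iso{a}$ gives a mean-convex Alexandrov embedded cylinder. (b) The map $A$ is open and proper by Lemma~\ref{open and proper}; combined with the compactness of its fibres $\iso{a}$, this yields Hausdorff continuity of $a\mapsto\iso{a}$ on $\bigmoduli^g$ — openness gives upper semicontinuity via a finite-cover argument on the compact $\iso{a_0}$, while properness gives lower semicontinuity. (c) Proposition~\ref{uniform bound} provides uniform $C^{k,\alpha}$-bounds on cylinders whose spectral data lie in a compact subset of $\bigmoduli^g$, so the constants $\epsilon,R$ of Proposition~\ref{ae deformation} can be fixed uniformly. (d) By the translation equivariance of polynomial Killing fields coming from~\eqref{eq:solution pk}, the restriction of $\imm_{\xi,\lambda_1,\lambda_2}$ to $B(w,R)$ is congruent, modulo an ambient isometry of $\Sp^3$ and a translation of the parameter, to $\imm_{\zeta(w),\lambda_1,\lambda_2}$ on $B(0,R)$, where $\zeta$ denotes the polynomial Killing field of $\xi$.

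For closedness I take $(a_n,b_n,\lambda_1^n,\lambda_2^n)\in\mae^g$ converging to $(a_0,b_0,\lambda_1^0,\lambda_2^0)\in\moduli_+^g$, pick $\xi_0\in\iso{a_0}$ with cylinder $\imm_0$, and let $\zeta_0$ be its polynomial Killing field. Using (b) together with a finite cover of $\iso{a_0}$, I produce for every $\delta>0$ and every sufficiently large $n$ an element $\tilde\xi_{n,w}\in\iso{a_n}$ satisfying $\|\tilde\xi_{n,w}-\zeta_0(w)\|<\delta$ \emph{uniformly} in $w\in\C$, since the bound factors through finitely many reference points of the compact $\iso{a_0}$. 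The cylinder $\imm_{\tilde\xi_{n,w},\lambda_1^n,\lambda_2^n}$ is mean-convex Alexandrov embedded by hypothesis, and by the smoothness of the spectral-data-to-immersion map (noted after Proposition~\ref{thm:PKF}) it is $C^1$-close on $B(0,R)$ to $\imm_{\zeta_0(w),\lambda_1^0,\lambda_2^0}$. Translating back by $w$ via (d) produces, for each $w$, a mean-convex Alexandrov embedded immersion $C^1$-close to $\imm_0$ on $B(w,R)$, with closeness uniform in $w$. Proposition~\ref{ae deformation} then gives $\imm_0\in\annae$ and hence $(a_0,b_0,\lambda_1^0,\lambda_2^0)\in\mae^g$. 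For openness I run the mirror argument, exchanging $a_n$ and $a_0$, and invoke Proposition~\ref{embedded isospectral} at the end to promote Alexandrov embeddedness of one $\xi_n\in\iso{a_n}$ to the whole of $\iso{a_n}$.

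The hard part will be the uniformity in $w$ required by Proposition~\ref{ae deformation}: plain convergence $\imm_n\to\imm_0$ in $C^{k,\alpha}$ only holds on compact subsets of $\C$ and is insufficient by itself. This is precisely where ingredient (d) enters, as it permits a single $C^1$-comparison on the fixed ball $B(0,R)$ to be transported to every $w\in\C$; combined with compactness of $\iso{a_0}$ and the Hausdorff continuity from (b), a finite set of comparisons thus controls all $w$ simultaneously.
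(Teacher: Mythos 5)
Your proof is correct and follows essentially the same route as the paper: Proposition~\ref{ae deformation} combined with the openness and properness of $A$ (Lemma~\ref{open and proper}), the uniform bounds of Proposition~\ref{uniform bound}, and the compactness of the isospectral fibres. You are in fact more explicit than the paper about the key uniformity-in-$w$ step — the paper encodes your ingredient (d) implicitly by requiring the whole fibre $\iso{\tilde a}$ to lie in the neighbourhood $U$, which works because the polynomial Killing field $\tilde\zeta(w)$ ranges over $\iso{\tilde a}$ as $w$ varies — and the only blemish is a harmless swap of the labels ``upper'' and ``lower'' semicontinuity in your ingredient (b).
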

\begin{proof}
We conclude from Proposition~\ref{ae deformation} and the properness and openess of the map $A$~\eqref{eq:map a} that $\mae^g$ is open and closed in $\moduli_+^g$. For $(a,b,\lambda_1,\lambda_2)\in\moduli_+^g$ let $N\subset\bigmoduli^g$ be an open neighbourhood of $(a,\lambda_1,\lambda_2)\in\bigmoduli^g$ with compact closure in $\bigmoduli^g$. Due to Proposition~\ref{uniform bound} the \cmc immersions of all $(\tilde{\xi},\tilde{\lambda}_1,\tilde{\lambda}_2)\in A^{-1}[N]$ have curvatures bounded by some $\kappa\ind{max}$ and obey \eqref{eq:upper bound derivative of h}.

For $(a,b,\lambda_1,\lambda_2)\in\mae^g$ all $(\xi,\lambda_1,\lambda_2)\in A^{-1}[\{(a,\lambda_1,\lambda_2)\}]$ correspond to $\imm\in\annae$ and have in $A^{-1}[N]$ an open neighbourhood, whose \cmc immersions $\tilde{\imm}$ obey $\|\imm-\tilde{\imm}\|_{\mathrm{C}^1(B(0,R))}<\epsilon$ on $B(0,R)\subset\C$ with the constants $\epsilon>0$ and $R>0$ of Proposition~\ref{ae deformation}. The union $U$ of these open neighbourhoods is an open neighbourhood of the compact subset $A^{-1}[\{(a,\lambda_1,\lambda_2)\}]$ of $A^{-1}[N]$. Proposition~\ref{ae deformation} implies that the following set $O$ is a subset of $\mae^g$:
$$O=\{(\tilde{a},\tilde{b},\tilde{\lambda}_1,\tilde{\lambda}_2)\in\moduli^g_+\mid(\tilde{\xi},\tilde{\lambda}_1,\tilde{\lambda}_2)\in U\mbox{ for all }\tilde{\xi}\in\iso{\tilde{a}}\}.
$$
We claim that $O$ is an open neighbourhood of $(a,b,\lambda_1,\lambda_2)$. Let a sequence $(a_n,b_n,\lambda_{1,n},\lambda_{2,n})_{n\in\mathbb{N}}$ in $\moduli^g_+\setminus O$ converge to $(\tilde{a},\tilde{b},\tilde{\lambda}_1,\tilde{\lambda}_2)\in\moduli^g_+$. Then there exists a sequence $(\xi_n,\lambda_{1,n},\lambda_{2,n})_{n\in\mathbb{N}}$ with $(\xi_n,\lambda_{1,n},\lambda_{2,n})\in A^{-1}[\{(a_n,\lambda_{1,n},\lambda_{2,n})\}]\setminus U$. Due to Lemma~\ref{open and proper} $A$ is proper. The preimeage
$$A^{-1}[\{(a_n,\lambda_{1,n},\lambda_{2,n})\mid n\in\mathbb{N}\}\cup
\{(\tilde{a},\tilde{\lambda}_1,\tilde{\lambda}_2)\}]$$
is compact and a subsequence of $(\xi_n,\lambda_{1,n},\lambda_{2,n})_{n\in\mathbb{N}}$ converges to $(\tilde{\xi},\tilde{\lambda}_1,\tilde{\lambda}_2)\in A^{-1}[\{(\tilde{a},\tilde{\lambda}_1,\tilde{\lambda}_2)\}]$. If $(\tilde{a},\tilde{\lambda}_1,\tilde{\lambda}_2)\not\in N$, then $(\tilde{a},\tilde{b},\tilde{\lambda}_1,\tilde{\lambda}_2)\not\in O$. Otherwise a subsequence of $(\xi_n,\lambda_{1,n},\lambda_{2,n})_{n\in\mathbb{N}}$ is mapped by $A$ to a convergent sequence in the open set $N$. This subsequence stays in the closed subset $A^{-1}[N]\setminus U$ of $A^{-1}[N]$ and has limit $(\tilde{\xi},\tilde{\lambda}_1,\tilde{\lambda}_2)\not\in U$. This again implies $(\tilde{a},\tilde{b},\tilde{\lambda}_1,\tilde{\lambda}_2)\not\in O$. Therefore $\moduli^g_+\setminus O$ is closed. This shows the claim and $\mae^g$ is open in $\moduli_+^g$.

Now we show that $\mae^g$ is closed in $\moduli^g_+$. Let $(a_n,b_n,\lambda_{1,n},\lambda_{2,n})$ be a sequence in $\mae^g$ converging in $\moduli^g_+$ to $(a,b,\lambda_1,\lambda_2)$. We have to show that any $(\xi,\lambda_1,\lambda_2)\in A^{-1}[\{(a,\lambda_1,\lambda_2)\}]$ corresponds to a mean-convex Alexandrov embedded cylinder $\imm$. By Lemma~\ref{open and proper} the map $A$ is open. Therefore every neighbourhood of $(\xi,\lambda_1,\lambda_2)$ contains elements of $A^{-1}[\{(a_n,\lambda_{1,n},\lambda_{2,n})\}]$. Therefore $\imm$ % the \cmc cylinder corresponding to $(\xi,\lambda_1,\lambda_2)$
obeys the condition of Proposition~\ref{ae deformation} and is mean-convex Alexandrov embedded.
\end{proof}
We summarize the results of this section in the following
\begin{theorem}\label{characterization}
The sets $\mae^g\subset\moduli^g_+$ have the following properties:
\begin{enumerate}
\item\label{prop1} $\mae^0=\mrot^0$.
\item\label{prop2} Let $(\tilde{a},\tilde{b},\lambda_1,\lambda_2)
=(p^2a,pb,\lambda_1,\lambda_2)$,
$(a,b,\lambda_1,\lambda_2)\in\moduli_+$ and $p$ be as in
Lemma~\ref{jump}.
\begin{itemize}
\item If
$(\tilde{a},\tilde{b},\lambda_1,\lambda_2)\in\mae^{g+\deg p}$,
  then $(a,b,\lambda_1,\lambda_2)\in\mae^g$.
\item If $(a,b,\lambda_1,\lambda_2)\!\in\!\mae^g$ and all
roots of $p$ belong to $\Sp^1$, then
$(\tilde{a},\tilde{b},\lambda_1,\lambda_2)\!\in\!\mae^{g+\deg p}$.
\end{itemize}
\item\label{prop3} For all $g\in\N\cup\{0\}$ the subset $\mae^g$ is closed
and open in $\moduli^g_+$.
\end{enumerate}
\end{theorem}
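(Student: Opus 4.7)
The plan is to dispose of all three parts by cross-referencing results already established in earlier sections; no new machinery is needed, so this theorem functions as a summary statement.

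First I would handle \eqref{prop3}, which is a direct restatement of Proposition~\ref{embedded deformation}, and hence requires no further argument beyond a citation.

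Next I would turn to \eqref{prop1}. The inclusion $\mrot^0\subset\mae^0$ is produced during the construction of $\mrot^0$ in Theorem~\ref{thm:onesided revolution}: each element of $\mrot^0$ is by definition the spectral datum of a mean-convex Alexandrov embedded homogeneous cylinder. For the reverse inclusion $\mae^0\subset\mrot^0$ I would invoke the selection argument already carried out in that proof, where mean-convex Alexandrov embeddedness forces the closing integers in \eqref{eq:b_eq} to satisfy $m=n=\pm 1$ (via the universal-cover identification $\imm:\bar{\mathbb{D}}\times\R\to\bar{\mathcal{D}}_+$ and the computation of $\pi_1(\bar{\mathbb{D}}\times\Sp^1)\cong\Z$), and further singles out the first of the two candidate values of $\beta_0^2$ by the length comparison of periods as $H\to\infty$. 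This shows that the only spectral genus zero spectral data compatible with mean-convex Alexandrov embeddedness are exactly those in $\mrot^0$.

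Finally I would address \eqref{prop2}. Both implications rest on the set-theoretic observation noted after Lemma~\ref{jump}, namely $\ann(a,b,\lambda_1,\lambda_2)\subset\ann(p^2a,pb,\lambda_1,\lambda_2)$, with equality whenever all roots of $p$ lie on $\Sp^1$. For the first bullet: if $(p^2a,pb,\lambda_1,\lambda_2)\in\mae^{g+\deg p}$, then every $\imm\in\ann(a,b,\lambda_1,\lambda_2)$ lies in $\ann(p^2a,pb,\lambda_1,\lambda_2)\subset\annae$, so $(a,b,\lambda_1,\lambda_2)\in\mae^g$ by Definition~\ref{spectral data 3}. For the second bullet: the unimodular-roots hypothesis forces equality of the two $\ann$-sets, so membership in $\annae$ transfers from $(a,b,\lambda_1,\lambda_2)$ to $(p^2a,pb,\lambda_1,\lambda_2)$.

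I expect no substantive obstacle; the theorem is essentially a bookkeeping statement recording the behaviour of $\mae^g$ under the two operations we can perform on spectral data, namely continuous deformation within $\moduli^g_+$ and the jumps of Lemma~\ref{jump}. The only point demanding care is the asymmetry in \eqref{prop2}: enrichment of complexity via non-unimodular factors of $p$ enlarges $\ann(a,b,\lambda_1,\lambda_2)$ strictly (geometrically, it corresponds to attaching bubbletons through a Bianchi--B\"acklund transform, which generically destroys Alexandrov embeddedness), which is why the second implication requires the unimodular-roots hypothesis while the first does not.
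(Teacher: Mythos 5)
Your proposal is correct and follows essentially the same route as the paper's own proof: part~\eqref{prop3} is cited from Proposition~\ref{embedded deformation}, part~\eqref{prop1} from the classification in Theorem~\ref{thm:onesided revolution} (the paper phrases it as the identity of $\mrot^0$ with the set of data whose isospectral set meets, hence lies in, $\annae$), and part~\eqref{prop2} from the inclusion $\ann(a,b,\lambda_1,\lambda_2)\subset\ann(p^2a,pb,\lambda_1,\lambda_2)$ with equality when all roots of $p$ are unimodular. The only hair-thin point you leave implicit is that $\mrot^0\subset\mae^0$ requires the \emph{whole} isospectral set, not just one representative, to lie in $\annae$, which is supplied by Proposition~\ref{embedded isospectral}.
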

\begin{proof}
Due to Theorem~\ref{thm:onesided revolution} all mean-convex Alexandrov embedded annuli of spectral genus $0$ belong to $\annrot=\bigcup_{(a,b,\lambda_1,\lambda_2)\in\mrot}\ann(a,b,\lambda_1,\lambda_2)$. This shows property~\eqref{prop1}:
\begin{align*}%\label{eq:Ae genus zero}
\mrot^0&=\{(a,b,\lambda_1,\lambda_2)\in\moduli_+^0\mid
\ann(a,b,\lambda_1,\lambda_2)\cap\annae\ne\emptyset\}\\&=
\{(a,b,\lambda_1,\lambda_2)\in\moduli_+^0\mid
\ann(a,b,\lambda_1,\lambda_2)\subset\annae\}\,.\nonumber
\end{align*}
Since $\ann(a,b,\lambda_1,\lambda_2)\subset\annae$ for all
$(a,b,\lambda_1,\lambda_2)\in\mrot^0$ the first equality implies the second.

Property~\eqref{prop2} follows from the properties of $\ann(a,b,\lambda_1,\lambda_2)$ and $\ann(p^2a,pb,\lambda_2,\lambda_2)$ in the situation of Lemma~\ref{jump}. Due to Propostion~\ref{th:pKf_min} (compare \cite[Lemma~4.7]{HKS1}) we have in all cases $\ann(a,b,\lambda_1,\lambda_2)\subset\ann(p^2a,pb,\lambda_1,\lambda_2)$ with equality, if all roots of $p$ lie in $\Sp^1$.

Finally property~\eqref{prop3} is proven in Proposition~\ref{embedded deformation}.
\end{proof}
%
%We do not make use of the fact that all cylinders $\imm\in\bigcup_{(a,b,\lambda_1,\lambda_2)\in\mrot^1}\ann(a,b,\lambda_1,\lambda_2)$ are mean-convex Alexandrov embedded. In fact, this follows from Theorem~\ref{thm:main}.
%
%%%%%%%%%%%%%%%%%%%%%%%%%%%%%%%%%%%%%%%%%%%%%%%%%%%%%%%%%%%%%%%
%%%%%%%%%%%%%%%%%%%%%%%%%%%%%%%%%%%%%%%%%%%%%%%%%%%%%%%%%%%%%%%
%%%%%%%%%%%%%%%%%%%%%%%%%%%%%%%%%%%%%%%%%%%%%%%%%%%%%%%%%%%%%%%
%
\section{Deformation of spectral data}
\label{sec:deformation of cylinders}
We now derive vector fields on the space of spectral data and construct integral curves of these vector fields. We parameterise such families by one or more real parameters, which we will denote by $t$. We view the functions on the corresponding families of spectral curves $\Sigma_t$ locally as two-valued functions depending on $\lambda$ and $t$. In a first step we consider spectral data of periodic solutions of periodic finite type solutions of the $\sinh$-Gordon equation and ignore the Sym points. So let $\mper^g$ denote the set of pairs $(a,b)\in\C^{2g}[\lambda]\times\C^{g+1}[\lambda]$, which obey the conditions~(i)-(v) in Definition~\ref{spectral data} without the statements involving $\lambda_1$ and $\lambda_2$. From conditions~(ii)-(iii) we conclude that $\partial_th$ is meromorphic and anti-symmetric with respect to the hyperelliptic involution $\sigma$~\eqref{eq:involutions}. Furthermore, this function $\partial_th$ can only have poles at the branch points, or equivalently at the zeroes of $a$, and at $\lambda=0$ and $\lambda=\infty$. If we assume that for such a family of spectral curves the genus $g$ is constant, then $\partial_th$ can have at most first order poles at simple roots of $a$. In general we have
\begin{equation}\label{eq:def_c}
\partial_th =\frac{c(\lambda)}{\nu\lambda}
\end{equation}
with a polynomial $c$ of degree $g+1$ obeying the reality condition
\begin{equation}\label{eq:c_reality}
\lambda^{g+1}\bar{c}(\bar{\lambda}^{-1})=c(\lambda).
\end{equation}
The corresponding vector field on the space of spectral data is
derived from the equality of both mixed second derivatives with
respect to $\lambda$ and $t$. For this purpose we write the derivative of
$h$ with respect to $\lambda$ as
(compare Definition~\ref{spectral data}~(iii))
\begin{equation}\label{eq:def_b}
dh=\frac{b(\lambda)}{\nu\lambda^2}d\lambda,
\end{equation}
\begin{equation}\label{eq:derivative c}
 \partial^2_{t\lambda}h =\partial_{\lambda}\frac{c}{\nu\lambda}=
\frac{c'}{\nu \lambda}-\frac{c}{\nu \lambda ^2}-
\frac{c \nu '}{\nu ^2 \lambda}=
\frac{2\lambda a(\lambda)c'(\lambda)-a(\lambda)c(\lambda)-
\lambda a'(\lambda)c(\lambda)}{2\nu\lambda^2a(\lambda)}\,,
\end{equation}
\begin{equation*}
    \partial^2_{\lambda t }h =
    \partial_{t} \frac{ b}{\nu \lambda^2}=
    \frac{\dot b}{\nu \lambda^2}-\frac{b \dot \nu }{\nu ^2 \lambda ^2}=
    \frac{2a\dot b -b \dot a }{2\nu\lambda ^2 a}\,.
\end{equation*}
Second partial derivatives commute if and only if
\begin{equation} \label{eq:integrability_1}
2\dot{b}a - b\dot{a} = 2 \lambda ac' - ac - \lambda a'c.
\end{equation}
Both sides in the last formula are polynomials of at most degree $3g+1$ which satisfy a reality condition. This corresponds to $3g+2$ real equations. Choosing a polynomial $c$ which satisfies the reality condition~\eqref{eq:c_reality} we thus obtain a vector field on polynomials $a$ and $b$. If $a$ and $b$ don't have common roots, then~\eqref{eq:integrability_1} determines at the roots of $a$ the values of $\dot{a}$ and its derivatives with respect to $\lambda$ up to one order less than the multiplicity of the root. The normalization~(v) in Definition~\ref{spectral data} uniquely determines $a$ in terms of its roots. Consequently $\dot{a}$ and $\dot{b}$ depend smoothly on $a$, $b$ and $c$, as long as $a$ and $b$ don't have common roots. If $a$ and $b$ have common roots, the vector field defined in \eqref{eq:integrability_1} by a polynomial $c$ has a singularity. In \cite[Proposition~9.5]{HKS2} we construct integral curves passing through these singularities for polynomials $c$, which does not vanish at the common roots of $a$ and $b$. Here we prove:
\begin{lemma}\label{common roots}
Let $c$ be a polynomial of degree at most $g+1$ obeying~\eqref{eq:c_reality}. Any $(a,b)\in\mper^g$ with simple roots of $a$ is the value at $t=0$ of a smooth path $(-\epsilon,\epsilon)\ni t\mapsto(a_t,b_t)\in\mper^g$ such that the lowest non vanishing $t$-derivative of the corresponding $h_t$ is at $t=0$ given by~\eqref{eq:def_c}.
\end{lemma}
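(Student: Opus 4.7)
The plan is to work around the singular locus of the vector field defined by \eqref{eq:integrability_1}. Write $q = \gcd(a,b)$, normalised to satisfy the analogue of \eqref{eq:p_reality}, and decompose $a = q\hat{a}$, $b = q\hat{b}$ with $\gcd(\hat{a},\hat{b}) = 1$. Since $a$ has simple roots, so do $q$ and $\hat{a}$, and $\hat{a}(\alpha)\hat{b}(\alpha) \neq 0$ at each root $\alpha$ of $q$. The obstruction to solving \eqref{eq:integrability_1} for polynomial $(\dot{a},\dot{b})$ at $(a,b)$ consists of the values $-\alpha_i q'(\alpha_i)\hat{a}(\alpha_i)c(\alpha_i)$ at the common roots $\alpha_1,\ldots,\alpha_r$ of $a$ and $b$, which in general do not vanish.

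When every $c(\alpha_i) = 0$, equation \eqref{eq:integrability_1} admits a smooth polynomial solution and integrating the associated vector field produces a smooth path with $\partial_t h|_{t=0} = c/(\nu\lambda)$. For general $c$, the plan is to do a second-order construction. Choose $(\dot{a},\dot{b})$ in the kernel of the first-order map. Solving $2a\dot{b} - b\dot{a} = 0$ using $\gcd(\hat{a},\hat{b})=1$ shows this kernel consists of pairs $(2\hat{a}\psi,\hat{b}\psi)$ for polynomial $\psi$, so $\partial_t h|_{t=0} = 0$ automatically; computing the second derivative along such a path yields
\[
\partial_t^2\, dh\bigr|_{t=0} \;=\; \frac{2a\ddot{b} - b\ddot{a} + 2\hat{a}\hat{b}\psi^2}{2a\nu\lambda^2}\,d\lambda.
\]
Matching this to $d(c/(\nu\lambda))$ reduces at each common root $\alpha_i$ to the interpolation condition
\[
\psi(\alpha_i)^2 \;=\; -\frac{\alpha_i\, q'(\alpha_i)\, c(\alpha_i)}{2\,\hat{b}(\alpha_i)},
\]
while away from the common roots the residual equation $2\hat{a}\ddot{b} - \hat{b}\ddot{a} = \tilde R$ (with $\tilde R$ the quotient of the $q$-divisible polynomial $2\lambda a c' - \lambda a'c - ac - 2\hat{a}\hat{b}\psi^2$ by $q$) is solvable in polynomials because $\gcd(\hat{a},\hat{b})=1$. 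The resulting $2$-jet extends to a smooth path in $\mper^g$ by applying the implicit function theorem to the defining conditions of $\mper^g$, which vary smoothly near $(a,b)$ when $a$ has simple roots.

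The main obstacle is constructing a real polynomial $\psi$ with the prescribed squared values at the common roots, subject to the reality condition $\lambda^{\deg\psi}\bar\psi(\bar\lambda^{-1}) = \psi(\lambda)$. Using the reality of $a$, $b$, $c$, $q$ and $\hat{b}$, one checks that at reciprocal conjugate pairs of common roots off $\Sp^1$ the prescribed squared values are compatibly related, leaving one free choice of square root per pair. At common roots on $\Sp^1$, the reality forces $\psi(\alpha)^2 \in \alpha^{\deg\psi}\R$, and an argument computation shows the prescribed value $-\alpha q'(\alpha)c(\alpha)/(2\hat{b}(\alpha))$ lies on the same real line; when its sign is the one admissible for real $\psi$, the interpolation is immediate. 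In the opposite sign case, one iterates the construction to a higher order $k$, seeking a path whose first $k-1$ derivatives of $h_t$ at $t=0$ all vanish and whose $k$-th derivative equals $c/(\nu\lambda)$; the analogous interpolation constraint then involves odd powers of the iterated kernel polynomials, which can always be solved over $\R$ regardless of sign. Packaging the ingredients together produces the required smooth path whose lowest non-vanishing $t$-derivative of $h_t$ at $t=0$ is $c/(\nu\lambda)$.
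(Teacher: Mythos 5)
Your scheme is genuinely different from the paper's (which models $(h-n_m\pi\mi)^2$ near each root of $b$ by an explicit polynomial $A_m$, deforms these local models and reglues via uniformisation), and your first- and second-order computations are correct as far as they go. But the second-order construction cannot work in general. Your interpolation condition $\psi(\alpha_i)^2=-\alpha_i q'(\alpha_i)c(\alpha_i)/(2\hat{b}(\alpha_i))$ divides by $\hat{b}(\alpha_i)$, which vanishes whenever $b$ has a root of order $k\ge 2$ at the common root $\alpha_i$: then $\hat{b}=b/q$ still vanishes there to order $k-1$, the term $2\hat{a}\hat{b}\psi^2$ contributes nothing at $\alpha_i$, and the second-order obstruction is $-\alpha_i q'(\alpha_i)\hat{a}(\alpha_i)c(\alpha_i)\ne 0$ for any $c$ with $c(\alpha_i)\ne0$. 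No $2$-jet realises such a $c$; one must go to order $k+1$, which is exactly what the paper's family $B_l(z_l)=t^{\ell_l}C_l(z_l/t)$ built from the polynomial part of $w^{\ell_l}(1-2/w)^{-1/2}$ achieves. Your only ``iterate to higher order'' remark is aimed at a sign problem for common roots on $\Sp^1$ --- a case that never occurs, since $\lambda^{-g}a\le 0$ on $\Sp^1$ together with simplicity of the roots forces $a$ to have no unimodular roots at all --- and does not address this genuine higher-order obstruction, nor is the claimed solvability of the iterated interpolation worked out.

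The second gap is the sentence asserting that the $2$-jet ``extends to a smooth path in $\mper^g$ by applying the implicit function theorem.'' This is precisely the point that cannot be taken for granted: the conditions (iii)--(iv) cutting out $\mper^g$ are period and integrality conditions whose linearisation degenerates exactly at points where $a$ and $b$ have common roots (that is why the vector field \eqref{eq:integrability_1} is singular there, and why Lemma~\ref{smooth moduli} assumes $b/a$ has first-order poles at the common roots --- a hypothesis violated in your situation, where $b/a$ is regular at them). A formal jet satisfying the defining equations to finite order need not be the jet of an actual path in $\mper^g$ without a local description of $\mper^g$ near such points; producing that description is the content of the paper's gluing/uniformisation construction, and it is the ingredient your argument omits rather than replaces.
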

\begin{proof}
We use the methods of \cite[Section~9.2]{HKS2}. First we modify $(a,b)$ into $(\tilde{a},\tilde{b})\in\mper^{g+\deg p}$ as described in Section~\ref{subsection higher order roots}. At all common roots of $a$ and $b$ we add as many double roots of $a$ and simple roots of $b$ such that the function $\tilde{f}$ in Definition~\ref{spectral data}~(iv) corresponding to $(\tilde{a},\tilde{b})$ does not vanish at the roots of $\tilde{a}$. We remove small disjoint discs $\bigcup_{m\in M}V_m\subset\C^\ast$ around the roots of $\tilde{b}$ from the compactified $\lambda$-plane $\CP$. Each $V_m$ contains only one root of $\lambda\mapsto\tilde{a}(\lambda)\tilde{b}(\lambda)$. On each $V_m$ we choose a single-valued branch of $h$ and an integer $n_m$ such that $h-n_m\pi\mi$ vanishes only at the common roots of $\tilde{a}$ and $\tilde{b}$ in $V_m$ (if there is one). Due to~\eqref{eq:poles} $\frac{h-n_m\pi\mi}{\tilde{\nu}}$ is holomorphic on $V_m$ without roots. Therefore the roots of $h-n_m\pi\mi$ coincide with the roots of $\tilde{a}$ in $V_m$. The discs $V_m$ have unique coordinates $z_m$ which vanish at the root of $\tilde{b}$ in $V_m$ with
\begin{align*}%\label{eq:undeformed_1}
A_m(z_m)&=z_m^{d_m}+a_{m,d_m}=(h-n_m\pi\mi)^2.
\end{align*}
Here $d_m-1$ is the order of the root of $\tilde{b}$ in $V_m$. For common roots, $d_m$ is the order of the root of $\tilde{a}$. $V_m$ is mapped by $z_m$ biholomorphically onto small discs $W_m\subset\C$.

We  describe  spectral curves in a neighbourhood of the given spectral curve by small perturbations $(\tilde{A}_m)_{m\in M}$ of the polynomials $(A_m)_{m\in M}$. More precisely, we consider polynomials $(\tilde{A}_m)_{m\in M}$ of the following form with coefficients $\tilde{a}_{m,2},\ldots,(\tilde{a}_{m,m}-a_m)$ nearby zero:
\begin{equation}\label{eq:deformed pol A}
\tilde{A}_m(z_m)=z_m^{d_m}+\tilde{a}_{m,2}z_m^{d_m-2}+ \tilde{a}_{m,3}z_m^{d_m-3}+\ldots+\tilde{a}_{m,m}.
\end{equation}
By a shift $z \to z+z_0$, we can always assume that the sum of the roots is zero and then $\tilde{a}_{m,1}=0$. We glue each $W_m$ to $\CP\setminus\bigcup_{m\in M}V_m$ along the boundary of $V_m$ in such a way that for all $m\in M$  the polynomial $\tilde{A}_m $ coincides with the unperturbed function $A_m $ in a tubular neighborhood of the boundary $\partial W_m$. We obtain a new copy of $\CP$. By uniformization, there exists a new global parameter $\tilde{\lambda}$, which is equal to $0$ and $\infty$ at the two points corresponding to $\lambda=0$ and $\lambda=\infty$, respectively. This new parameter is unique up to multiplication with elements of $\C^\ast$. There exists a biholomorphic map $\tilde{\lambda}= \phi (\lambda)$ which changes the parameter $ \lambda \in\CP\setminus\bigcup_{m\in M}V_m$ to the global parameter $\tilde{\lambda}$. Furthermore there are biholomorphic maps $\tilde{\lambda}=\phi_m (z_m)$ which change the local parameter $z_m \in W_m$ into $\tilde{\lambda}$. Let $\tilde{\lambda}\to\tilde{a}(\tilde{\lambda})$ be the polynomial whose roots (counted with multiplicities) coincide on each $V_m$ with the zero set of $\tilde{A}_m(\tilde{\lambda})$ and on $\CP\setminus\bigcup_{m\in M}V_m$ with the roots of $\tilde{\lambda}\to a\circ\phi ^{-1}(\tilde{\lambda})$. Now $\tilde{\Sigma} =\{(\tilde{\nu},\tilde{\lambda})\in\C^2\mid\nu^2=\frac{\tilde{a}(\tilde{\lambda})}{\tilde{\lambda}}\}$ is the hyperelliptic curve associated to the set of polynomials $(\tilde{A}_m)_{m\in M}$. We say that polynomials $(\tilde{A}_m)_{m\in M}$ respect the reality condition if the involutions $\sigma$, $\rho$ and $\eta$ lift to involutions of $\tilde{\Sigma}$ and then define a spectral curve. In this case, the parameter $\tilde{\lambda}$ is determined up to a rotation $\tilde{\lambda}\mapsto e^{\mi\varphi}\tilde{\lambda}$. The equations
\begin{equation}\label{eq:deform_1}
(\ln\mu-n_m\mi\pi)^2=\tilde{A}_m(\tilde{\lambda})= \tilde{A}_m \circ  \phi _m ^{-1}  (\tilde{\lambda} )=\tilde{A}_m (z_m) \quad\hbox{ for }\quad m\in M
\end{equation}
define a function $\mu$ on the pre-image of $\phi_m(W_m) \cap \CP$ by the map $\tilde{\lambda}$ into  $\tilde{\Sigma}$. The function $\mu$ extends to the pre-image of $\C^* \setminus\bigcup_{m\in M}V_m$ by $\tilde{\lambda}=\phi (\lambda)$ and coincides with the unperturbed $\mu$ on this set. On $\tilde{\Sigma}$ the differential $d \ln \mu$ is again meromorphic and takes the form $d \ln \mu = \frac{\tilde{b}(\tilde{\lambda}) \,d \tilde{\lambda}}{\tilde{\nu}\tilde{\lambda}^2}$ with a unique polynomial $\tilde{b}$. By taking the derivative of \eqref{eq:deform_1} we have
\begin{align*}
2(\ln\mu-n_m\pi \mi)\tfrac{d}{d \tilde{\lambda}}\ln\mu&=
\tilde{A}_m'(z_m(\tilde{\lambda}))z'_m( \tilde{\lambda})\,.
\end{align*}
The roots of $\tilde{b}$ are the roots of the derivatives of $(\tilde{A}_m)_{m\in M}$. Alltogether the small coefficients $\tilde{a}_{m,2},\ldots,(\tilde{a}_{m,m}-a_m)$ of~\eqref{eq:deformed pol A} parameterise an open set of spectral data $(\tilde{a},\tilde{b},\lambda_1,\lambda_2)\in\mper^{g+\deg p}$. 

Now we determine the intersection of this open subset with the image of $\mper^g\hookrightarrow\mper^{g+\deg p}$. It corresponds to all $(\tilde{A}_m)_{m\in M}$ which have the same number of odd order roots as $(A_m)_{m\in M}$. This means that all those $A_m$ have exactely one odd order root, whose $V_m$ contain a root of $a$. We denote the corresponding subset of indices by $l\in L\subset M$. Hence the small coefficients of
\begin{align*}
\tilde{A}_l(z_l) &=(z_l-2b_{l,1})B_l^2(z_l)&&\hbox{with}&
B_l(z_l)&=z_l^{\ell_l}+b_{l,1}z^{\ell_l-1}+\ldots+b_{l,\ell_l}&&\text{and}&d_l&=2\ell_l+1.
\end{align*}
together with the small coefficients of the remaining polynomials $(\tilde{A}_m)_{m\in M\setminus L}$ parameterise a neighbourhood of $(a,b)\in\mper^g$. For a smooth family of such polynomials $(\tilde{A}_m)_{m\in M}$ we fix the degree of roations $\tilde{\lambda}\mapsto e^{\mi\varphi}\tilde{\lambda}$ by assuming that $\mu$ is constant along the path at $\lambda_0\in\Sp^1$ in the complement of $\bigcup_{m\in M}V_m$. Consequently the corresponding polynomial $c$ vanishes at $\lambda_0$. We obtain a smooth path in $\mper^g$. Due to \eqref{eq:def_c},\eqref{eq:def_b} and \eqref{eq:deform_1} the corresponding $t$-derivatives obey
\begin{align}\label{eq:no common root}
\frac{\partial_th}{\partial_\lambda h}=
\frac{\lambda c}{b}&=
\frac{\dot{\tilde{A}}_m}{\tilde{A}'_m}\frac{d\tilde{\lambda}}{dz_m}
&\mbox{on }&V_m&\mbox{for }m&\in M\setminus L&\\
\label{eq:common root}
\frac{\partial_th}{\partial_\lambda h}=
\frac{\lambda c}{b}&=
\frac{\dot{\tilde{A}}_l}{\tilde{A}'_l}\frac{d\tilde{\lambda}}{dz_l}=
\frac{-2\dot{b}_{l,1}B_l(z_l)+2(z_l-2b_{l,1})\dot{B}_l(z_l)}
     {B_l(z_l)+2(z_l-b_{l,1})B'_l(z_l)}\frac{d\tilde{\lambda}}{dz_l}
&\mbox{on }&V_l&\mbox{for }l&\in L.
\end{align}
For all $m\in M$ the quotient $\frac{d\tilde{\lambda}}{dz_m}$ does not vanish on $W_m$, since $\tilde{\lambda}=\phi_m(z_m)$ is biholomorphic. Therefore the common lowest non-vanishing $t$-derivative of $(\tilde{A}_m)_{m\in M}$ determines the singular part of $\frac{c}{b}$ and therefore the values of $c$ at all roots of $b$. Due to our normalization $c$ vanishes at $\lambda_0$ and is completely determined by the lowest non-vanishing $t$-derivative of $(\tilde{A}_m)_{m\in M}$. The rotation $\lambda\mapsto e^{\mi t}\lambda$ acts on $\mper^g$ as $a_t(\lambda)=a(e^{\mi t}\lambda)$ and $b_t(\lambda)=b(e^{\mi t}\lambda)$ and correspond to $c=\mi b$. Therefore it suffices to show that the lowest non-vanishing $t$-derivatives in the numerators on the right hand sides of~\eqref{eq:no common root} and \eqref{eq:common root} can take all polynomials of degree less than $d_m$ and $\ell_l$, repsectively. For $m\in M\setminus L$ in \eqref{eq:no common root} this is clear even for first order $t$-derivatives. For $l\in L$ we have $b_{l,1}=0$ and $B_l(z)=z^{\ell_l}$ at $t=0$. Therfore the first order $t$-derivatives in the numerator on the right hand side of~\eqref{eq:common root} can take at $t=0$ only all polynomials of degree less than $\ell_l$ which vanish at $z_l=0$. For the remaining constant polynomials we define (compare~\cite[Lemma~9.6]{HKS2}):
$$C_l(w)=\mbox{polynomial part of }w^{\ell_l}\left(1-\tfrac{2}{w}\right)^{-\frac{1}{2}}=\sum_{k=0}^{\ell_l}a_kw^{\ell_l-k}=w^{\ell_l}\left(1-\tfrac{2}{w}\right)^{-\frac{1}{2}}-\tfrac{a_{\ell_l+1}}{w}+\Order(w^{-2})$$
with $a_k=(-2)^k\binom{-\frac{1}{2}}{k}$. The polynomial $B_l(z_l)=t^{\ell_l}C(\frac{z_l}{t})$ with $b_{l,1}=a_1t=t$ yields
$$\tilde{A}_l(z_l)=(z_l-2t)B_l^2(z_l)=z_l^{2\ell_l+1}-2t^{\ell_l+1}a_{\ell_l+1}z_l^{\ell_l}+t^{\ell_l+2}\Order(z_l^{\ell_l-1}).$$
In this case the lowest non-vanishing $t$-derivative in the numerator on the right hand side of \eqref{eq:common root} is constant. By adding to $B_l$ products of $t^{\ell_l+1}$ with polynomials of degree less than $\ell_l$ this lowest non-vanishing $t$-derivative can take arbitrary polynomials of degree less than $\ell_l$.
\end{proof}
Now we present for later use two general constructions of deformations in $\moduli^g$. In addition to the polynomials $a$ and $b$ we have to deform  the two Sym points such that the conditions of Definition~\ref{spectral data} are preserved. As long as $\lambda_1\neq \lambda_2$, and thus $| H | <\infty$, we preserve the closing condition if $\frac{d}{dt}h(\lambda_j(t),\,t) = 0$, which holds precisely when $\partial_\lambda h(\lambda_j(t),\,t)\,\dot{\lambda}_j+\partial_th(\lambda_j(t),\,t) = 0$. Using equations \eqref{eq:def_b} and \eqref{eq:def_c}, the closing conditions are therefore preserved if and only if
\begin{align}\label{eq:integrability_2}
\frac{\dot{\lambda}_1}{\lambda_1}&=-\frac{c(\lambda_1)}{b(\lambda_1)}&
\mbox{and}&&
\frac{\dot{\lambda}_2}{\lambda_2}&=-\frac{c(\lambda_2)}{b(\lambda_2)}.
\end{align}
The equations \eqref{eq:integrability_1}-\eqref{eq:integrability_2}
define  rational vector fields on the
space of spectral data $(a,b,\lambda_1,\lambda_2)$ obeying
conditions~(i)-(iv) in Definition~\ref{spectral data}. These
vector fields are smooth as long as $a$ and $b$ do not have common
roots, and $b$ does not vanish at $\lambda_1$ and $\lambda_2$. The
normalisation $\lambda_2=\lambda_1^{-1}$ in
Definition~\ref{spectral data}~(v) is preserved if
\begin{align}\label{eq:normalisation}
\frac{c(\lambda_1)}{b(\lambda_1)}+\frac{c(\lambda_2)}{b(\lambda_2)}&=0.
\end{align}
The function $\mu=e^h$ is defined on the spectral curve, which is a
two-sheeted covering over $\lambda\in\CP$. Hence $\mu$ is a two-valued
function depending on $\lambda\in\C^\ast$. Instead of $\mu$ the function
\begin{align}\label{eq:delta}
\Delta:\C^\ast&\to\C,&\lambda\mapsto\Delta&=\mu+\mu^{-1}
\end{align}
is single-valued and determines the corresponding values
$\mu_{1,2}=\frac{1}{2}(\Delta\pm\sqrt{\Delta^2-4})$ of the function
$\mu$. The range of \eqref{eq:delta} is called $\Delta$-plane and the
domain is called $\lambda$-plane. Then $\mu\in\Sp^1$ is equivalent to
$\Delta\in[-2,2]$. We shall construct in
Proposition~\ref{global existence}  a one-dimensional
family of spectral data, such that the values of
$\Delta$ at the $g+1$ simple roots of $b$ are prescribed. These values
lie on given curves in the $\Delta$-plane. If these
curves do not intersect $\Delta=\pm2$, then the roots of $b$ stay
away from the roots of $a$.

Let $\Delta_0$ be the function
\eqref{eq:delta} corresponding to the initial spectral data
$(a_0,b_0,\lambda_{1,0},\lambda_{2,0})\in\moduli^g$.
We choose curves $\beta_1,\ldots,\beta_{g+1}$ in the
$\lambda$-plane. They define the prescribed curves
$t\mapsto\Delta_0(\beta_1(t)),\ldots,t\mapsto\Delta_0(\beta_{g+1}(t))$
in the $\Delta$-plane. We impose the following conditions:
\begin{enumerate}
\item\label{cond embedding}
$\beta_1,\ldots,\beta_{g+1}\!:\![-1,1]\!\to\!
\{\lambda\in\C^\ast\mid a_0(\lambda)\ne0,\lambda\ne\lambda_{1,2}\}$
are either constant (see Figure~1) or smooth embedded curves (see Figures~2-3).
\item\label{cond initial} The initial values
$\beta_1(0),\ldots,\beta_{g+1}(0)$ are the roots of the initial
polynomial $b_0$.
\item\label{cond disjoint}
Each curve $\beta_i([-1,1])$ is either disjoint from the other (see
Figure~1-2) or intersects exactly one other curve $\beta_j([-1,1])$ in
one point $\beta_i(1)=\beta_j(1)$ with
$\dot\beta_i(1)\ne\pm\dot\beta_j(1)$ (see Figure~3). We
assume that there exists a single-valued injective branch of $h$ on
$\beta_i[0,1]$ and $\beta_i[0,1]\cup\beta_j[0,1]$, respectively, which
obeys \eqref{eq:symmetry} along $\beta_i$ and $\beta_j$:
\begin{align}\label{eq:symmetry}
h(\beta_i(-t))&=h(\beta_i(t))&\mbox{ for all }&t\in[0,1].
\end{align}
Such curves $\beta_i$ exist since the simple root $\beta_i(0)$ of
$b$ is a first order branch point of $h$.
\item\label{cond reality}
The curves respect the reality condition, so that two not necessarily
different curves $\beta_i$ and $\beta_j$ with conjugated initial roots
$\beta_j(0)=\bar{\beta}_i^{-1}(0)$ obey
$\beta_j(t)=-\bar{\beta}_i^{-1}(t)$ for all $t\in[0,1]$.\vspace{-5mm}
\end{enumerate}

\hspace{1mm}\setlength{\unitlength}{5mm}
\begin{picture}(10,10)
\linethickness{0.05mm}
\put(4,5){\circle{2.5}}
\put(4,5){\circle*{.2}}
\put(3.3,5.3){$\beta_i(t)$}
\put(3.5,4.3){$S_m$}
\put(3.5,2.8){$V_m$}
\put(0,1){Figure~1. Constant curve}
\end{picture}\hspace{0mm}
\setlength{\unitlength}{5mm}
\begin{picture}(10,10)
\linethickness{0.05mm}
\qbezier(1,5)(2,5.5)(4,5)
\qbezier(4,5)(6,4.5)(7,5)
\put(4,5){\oval(8,2)}
\put(1,5){\circle*{.2}}
\put(.4,5.3){$\beta_i(-1)$}
\put(4,5){\circle*{.2}}
\put(3.3,5.3){$\beta_i(0)$}
\put(7,5){\circle*{.2}}
\put(6.0,5.3){$\beta_i(1)$}
\put(3.5,3){$V_m$}
\put(2.2,4.5){$S_m$}
\put(0,1){Figure~2. Single curve}
\end{picture}
\setlength{\unitlength}{5mm}
\begin{picture}(10,10)
\linethickness{0.05mm}
\qbezier(1,8)(2,8.5)(4,8)
\qbezier(4,8)(6,7.5)(7,8)
\qbezier(7,8)(6.5,7)(7,5.5)
\qbezier(7,5.5)(7.5,4)(7,3)
\put(5,8){\oval(10,2)[l]}
\put(7,6){\oval(2,8)[b]}
\qbezier(5,7)(6,7)(6,6)
\put(5,9){\line(1,0){2}}
\put(8,6){\line(0,1){2}}
\qbezier(7,9)(8,9)(8,8)
\put(1,8){\circle*{.2}}
\put(.3,8.3){$\beta_i(-1)$}
\put(4,8){\circle*{.2}}
\put(3.3,8.3){$\beta_i(0)$}
\put(7,8){\circle*{.2}}
\put(6.0,8.3){$\beta_i(1)$}
\put(7.2,7.5){$\beta_j(1)$}
\put(7,5.5){\circle*{.2}}
\put(7.2,5.5){$\beta_j(0)$}
\put(7,3){\circle*{.2}}
\put(7.2,2.8){$\beta_j(-1)$}
\put(4.8,6){$V_m$}
\put(5.6,7){$S_m$}
\put(0,1){Figure~3. Two curves}
\end{picture}\vspace{-3mm}
\begin{proposition}\label{global existence}
Let $(a_0,b_0,\lambda_{1,0},\lambda_{2,0})\in\moduli^g$ and let
$\Delta_0$~\eqref{eq:delta} take $g+1$ pairwise different values in
$\C\setminus\{-2,2\}$ at the roots of $b_0$. For
given smooth curves $\beta_1,\ldots,\beta_{g+1}$ obeying
\eqref{cond embedding}-\eqref{cond reality} there
exists a continuous family % of spectral data
$(a_t,b_t,\lambda_{1,t},\lambda_{2,t})_{t\in[0,1]}$ in $\moduli^g$
such that $\Delta_t$~\eqref{eq:delta} takes at the roots of the
polynomial $b_t$ the values
$\Delta_0(\beta_1(t)),\ldots,\Delta_0(\beta_{g+1}(t))$.
\end{proposition}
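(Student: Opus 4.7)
\emph{Plan of proof.} The strategy is to realize the family $(a_t,b_t,\lambda_{1,t},\lambda_{2,t})$ as an integral curve of the vector field \eqref{eq:integrability_1}--\eqref{eq:integrability_2}, where at each time $t$ the polynomial $c$ is determined uniquely from the prescribed evolution of $\Delta$ at the roots of $b_t$. I would first write down this linear system, prove it is solvable as long as $b_t$ has simple roots disjoint from the roots of $a_t$ and from the Sym points, and then extend the resulting local solution to all of $[0,1]$ by combining the properness of the map $A$ with Lemma~\ref{common roots} to pass through the collisions permitted by condition~\eqref{cond disjoint}.

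\textbf{Determining $c$.} Let $\tilde\beta_1(t),\ldots,\tilde\beta_{g+1}(t)$ be the (a priori moving) roots of $b_t$ and assume they are simple. Since $\partial_\lambda h$ vanishes at each $\tilde\beta_i$, the chain rule gives
\begin{equation*}
\tfrac{d}{dt}\Delta_t(\tilde\beta_i(t))
=\partial_t\Delta_t\bigl(\tilde\beta_i(t)\bigr)
=2\sinh\bigl(h(\tilde\beta_i)\bigr)\,\frac{c(\tilde\beta_i)}{\nu(\tilde\beta_i)\,\tilde\beta_i}\,,
\end{equation*}
so the required identity $\Delta_t(\tilde\beta_i(t))=\Delta_0(\beta_i(t))$ differentiated in $t$ prescribes $c(\tilde\beta_i)$ for $i=1,\ldots,g+1$. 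Adjoined to the normalisation equation \eqref{eq:normalisation}, this gives $g+2$ real linear equations for the $g+2$-dimensional real vector space of polynomials $c\in\C^{g+1}[\lambda]$ obeying \eqref{eq:c_reality}. The coefficient matrix is, up to a rescaling, a Lagrange interpolation matrix at the points $\tilde\beta_i$ and $\lambda_1,\lambda_2$, and is therefore invertible whenever those points are pairwise distinct; the prescribed right hand sides are compatible with the reality condition because of the reality hypothesis~\eqref{cond reality} on the curves $\beta_i$. Hence $c$ is uniquely determined and depends smoothly on the state $(a_t,b_t,\lambda_{1,t},\lambda_{2,t})$ and on $t$.

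\textbf{Local flow and obstructions.} With $c$ in hand, \eqref{eq:integrability_1}--\eqref{eq:integrability_2} defines a smooth ODE on the open locus where the roots of $b_t$ are simple, pairwise distinct, disjoint from $\{\lambda_1,\lambda_2\}$ and disjoint from the roots of $a_t$. Condition~\eqref{cond embedding} keeps the roots of $b_t$ away from $\lambda_1,\lambda_2$; the hypothesis $\Delta_0(\beta_i(t))\in\C\setminus\{-2,2\}$ prevents any root of $b$ from meeting a root of $a$ (such a meeting would be a branch point of $h$ with $\mu=\pm 1$, hence $\Delta=\pm 2$); and conditions~\eqref{cond initial}--\eqref{cond disjoint} keep the roots of $b$ simple and pairwise distinct for $t\in[0,1)$. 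Standard ODE theory therefore produces a smooth integral curve on some maximal interval, and I would check that the conditions~(i)--(iv) of Definition~\ref{spectral data} (closing conditions included) are preserved by construction, since the defining integrals over the cycles $\gamma_i$ are constant along the flow by~\eqref{eq:integrability_1} and the Sym closing conditions by~\eqref{eq:integrability_2}.

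\textbf{Collisions at $t=1$ and global existence.} The only admissible failure of the nondegeneracy hypotheses occurs in case~\eqref{cond disjoint} when two curves $\beta_i$ and $\beta_j$ meet at $t=1$. This is precisely the singularity analysed in Lemma~\ref{common roots}: the symmetry \eqref{eq:symmetry} together with the transversality $\dot\beta_i(1)\ne\pm\dot\beta_j(1)$ ensures that in the local parameters $\tilde A_m$ of \eqref{eq:deformed pol A} used in the proof of Lemma~\ref{common roots}, the prescribed critical values of $h$ depend smoothly on $t$ up to $t=1$ and determine a smooth path of polynomials $\tilde A_m$ through the stratum where two roots of $b$ coalesce. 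This gives a continuous extension of $(a_t,b_t,\lambda_{1,t},\lambda_{2,t})$ in $\moduli^g$ to $t=1$. For global existence on $[0,1)$ I would use that the roots of $a_t$ and $|a_t(0)|$ stay bounded (the prescribed values of $\Delta$ on compact curves control $b_t$, and the relation $\mu^2-\Delta\mu+1=0$ together with the normalisation in Definition~\ref{spectral data}(v) controls $a_t$), so by the properness part of Lemma~\ref{open and proper} the flow cannot escape to infinity; any limit point is still admissible spectral data, and the flow extends.

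\textbf{Main obstacle.} The delicate step is the passage through $t=1$ when two roots of $b$ collide: the vector field~\eqref{eq:integrability_1} is genuinely singular there, and extending the flow requires the coordinate change of Lemma~\ref{common roots} together with the symmetry hypothesis~\eqref{eq:symmetry}, which is exactly what guarantees that the two prescribed critical values of $\Delta$ approach each other with compatible derivatives.
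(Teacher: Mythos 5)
Your route is genuinely different from the paper's. You propose to obtain the family as an integral curve of \eqref{eq:integrability_1}--\eqref{eq:integrability_2}, with $c$ recovered at each time from the prescribed derivatives of $\Delta$ at the roots of $b_t$. The paper instead performs a local surgery on the $\lambda$-plane: it removes small tubular neighbourhoods $V_m$ of the curves $S_m$ chosen so as to contain no root of $a_0$ and no Sym point, replaces the function $(h-2n_m\pi\mi)^2$ there by explicit quadratic or cubic models $A_{m,t}$ with the prescribed critical values, glues back and re-uniformises; the vector field only appears at the very end, to check smoothness for $t<1$. This difference is not cosmetic, because the surgery makes automatic exactly the points your flow argument leaves open. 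First, global existence on $[0,1)$: keeping $\Delta$ away from $\pm2$ at the roots of $b_t$ prevents roots of $b$ from meeting roots of $a$, but you give no argument that the roots of $a_t$ remain pairwise distinct, remain off $\Sp^1$ (so that the inequality in \eqref{eq:a_reality} survives), remain in a compact subset of $\C^\ast$, or that $\lambda_{1,t}\ne\lambda_{2,t}$ persists. Lemma~\ref{open and proper} is properness of the map $\xi\mapsto(-\lambda\det\xi,\lambda_1,\lambda_2)$ and yields nothing about the coefficients of $a_t$ along the flow; the sentence ``the relation $\mu^2-\Delta\mu+1=0$ together with the normalisation controls $a_t$'' is not a proof. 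In the paper all of this is immediate because $\Delta_0$ is literally unchanged outside $\bigcup_m V_m$, so the roots of $a$ and the Sym points are simply transported by the biholomorphic change of uniformising parameter.

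Second, the passage through $t=1$ is misdiagnosed. When two curves meet as in condition~\eqref{cond disjoint}, two roots of $b$ coalesce at a point where $\Delta\ne\pm2$; this produces a double root of $b$, \emph{not} a common root of $a$ and $b$, so Lemma~\ref{common roots} (which treats exactly the common-root degeneracy of the vector field) is not the relevant tool. The correct local model is the cubic $A_m(z)=z^3+b_mz+a_m$ of \eqref{eq:pol}, whose critical values differ by $4(-b_m/3)^{3/2}$; since the prescribed difference of critical values vanishes to first order at $t=1$, the coefficient $b_{m,t}$ is only a fractional power of $1-t$ there. Hence your claim that the prescribed data ``determine a smooth path of polynomials $\tilde A_m$ through the stratum where two roots of $b$ coalesce'' is false as stated: the family is only continuous at $t=1$ after the reparameterisation $t=1-(1-\tilde t)^2$, which is precisely why the proposition asserts continuity rather than smoothness. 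To repair your argument you would need to (a) localise the deformation away from the roots of $a$ and the Sym points, or otherwise derive a priori bounds keeping $(a_t,b_t,\lambda_{1,t},\lambda_{2,t})$ in a compact subset of $\moduli^g$ for $t<1$, and (b) replace the appeal to Lemma~\ref{common roots} at $t=1$ by the explicit cubic normal form with the square-root reparameterisation.
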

\begin{proof}
Again we use the methods of Lemma~\ref{common roots}
(compare \cite[Section~9.2]{HKS2}). We compactify the
$\lambda$-planes of the functions $\Delta_t$ \eqref{eq:delta}
corresponding to the spectral data
$(a_t,b_t,\lambda_{1,t},\lambda_{2,t})$ to $\CP$. For any $t\in(0,1]$
we remove from the compactified $\lambda$-plane of $\Delta_0$ the
curves $\beta_1,\ldots,\beta_{g+1}$. Along this open subset of $\CP$ we
glue deformed small tubular neighbourhoods $(W_{m,t})_{m\in M}$ of the
curves $\beta_1,\ldots,\beta_{g+1}$ as explained below. The result is
a compact simply connected Riemann surface. By the uniformisation
theorem there exists a global parameter $\lambda_t$ on this compact
Riemann surface, which takes the values $\lambda_t=0$ and
$\lambda_t=\infty$ at the two points $\lambda=0$ and $\lambda=\infty$
in the $\lambda$-plane of $\Delta_0$. The appropriately normalised
parameter $\lambda_t$ identifies the compact Riemann surface with the
compactified $\lambda$-plane of $\Delta_t$. Due to the gluing rules,
$\Delta_0$ extends to a holomorphic function $\Delta_t$ with respect
to $\lambda_t\in\C$ and defines the spectral data
$(a_t,b_t,\lambda_{1,t},\lambda_{2,t})$.

Due to conditions~\eqref{cond embedding} and \eqref{cond disjoint},
$\beta_1([-1,1])\cup\ldots\cup\beta_{g+1}([-1,1])$ is the disjoint
union $\bigcup_{m\in M}S_m$ of connected curves in the
$\lambda$-plane of $\Delta_0$. Each $S_m$ contains either one curve
like in Figure~1 and 2, or contains two connected curves like in
Figure~3. For each $m\in M$ the branch of $h$ specified in
condition~\eqref{cond disjoint} has on $S_m$ vanishing derivative only
at the roots of $b_0$. This branch takes due to \eqref{eq:symmetry}
on all curves $\beta_i$ in $S_m$ the same values at $\beta_i(t)$ and
$\beta_i(-t)$.

We choose simply connected tubular neighbourhoods $(V_m)_{m\in M}$ of
$(S_m)_{m\in M}$. Each $V_m$ is sufficiently small not to contain a
root of $a_0$. Therefore each
$U_m=\{(\lambda,\nu)\in\Sigma^\ast\mid\lambda\in V_m\}$ has two
connected components. The branch of $h$ in condition~(iii) extends to
one of these components. We choose integers $(n_m)_{m\in M}$ such that
$h-2n_m\pi\mi$ does not vanish on this component of $U_m$. We extend
the branch of $h$ to the other component of $U_m$ by setting
$\sigma^\ast(h-2n_m\pi\mi)=-h+2n_m\pi\mi$. With this choice
$(h-2n_m\pi\mi)^2$ is invariant with respect to $\sigma$ and
therefore is well defined on $V_m$. The only critical
points of $(h-2n_m\pi\mi)^2$ on $V_m$ are the roots of $b_0$ in $S_m$.

We define polynomials $A_m$ with the same critical values as $(h-2n_m\pi\mi)^2$ on $S_m$. After a change of coordinate
$z\mapsto cz+x_0$ the highest coefficient is one and the second highest
coefficient vanishes. So $A_m$ takes one of the following forms:
\begin{equation}\label{eq:pol}\begin{aligned}
A_m(z_m)&=z_m^2+a_m&
\mbox{crit.pt. }z_m&=0&
\mbox{crit.value }a_m\\
A_m(z_m)&=z_m^3+b_mz_m+a_m&
\mbox{crit.pts. }z_m&=\!\pm(\tfrac{-b_m}{3})^{1/2}&
\mbox{crit.values }a_m&\!\mp2(\tfrac{-b_m}{3})^{3/2}.
\end{aligned}\end{equation}
On each simply connected $V_m$ there exists a holomorphic function $z_m$ such that $(h-2n_m\pi\mi)^2$ is equal to a $A_m(z_m)$. The roots of $b_0$ are the critical points of $(h-2n_m\pi\mi)^2$ and correspond to the critical points of $A_m$. Due to condition~\eqref{cond disjoint} on a sufficiently small tubular neighbourhood $V_m$ the function $z_m$ is a biholomorphic map onto a simply connected open subset $W_m\subset\C$. The image of $S_m$ in this subset is denoted by $T_m$. For all $m\in M$ we obtain a biholomorphic map $S_m\subset V_m\simeq T_m\subset W_m$ (left hand side of \eqref{diagram}).

Now we deform these discs $(W_m)_{m\in M}$. For this purpose we deform
the polynomials $A_m$ into a continuous family of polynomials
$(A_{m,t})_{t\in[0,1]}$ of the form~\eqref{eq:pol} with
coefficients $a_{m,t}$ and $b_{m,t}$, respectively. Their critical
values are the values of $(h-2n_m\pi\mi)^2$ at $\beta_i(t)$ and
$\beta_i(-t)$ and eventually at $\beta_j(t)$ and $\beta_j(-t)$. Due to
\eqref{eq:pol} the coefficient $a_{m,t}$ depend smoothly on
$t\in[0,1]$. In case of two intersecting curves like in Figure~3 the
difference of the critical values has at $t=1$ a first order root with
respect to $t$. In this case $b_{m,t}$ depends smoothly on
$\tilde{t}=1-(1-t)^\frac{1}{2}$. For any $z_m\in W_m\setminus T_m$
we consider a curve
\begin{align}\label{eq:curves}
[0,1]&\to\C&t&\mapsto z_{m,t}&\mbox{ with}&&
A_{m,t}(z_{m,t})&=A_m(z_m)\,.
\end{align}
Since $A_{m,t}(z_{m,t})$ is constant along the curve, it solves the
differential equation
\begin{align*}
\dot{z}_{m,t}&=-\frac{\dot{A}_{m,t}(z_{m,t})}{A'_{m,t}(z_{m,t})}&
\mbox{with}&& z_{m,0}&=z_m.
\end{align*}\vspace{-12mm}

\noindent\hspace{10mm}\setlength{\unitlength}{5mm}
\begin{picture}(10,10)
\linethickness{0.05mm}
\qbezier(1,5)(2,5.5)(4,5)
\qbezier(4,5)(6,4.5)(7,5)
\put(4,5){\oval(8,2)}
\put(1,5){\circle*{.2}}
\put(4,5){\circle*{.2}}
\put(7,5){\circle*{.2}}
\put(3.5,3){$W_m$}
\put(5,5){$T_m$}
\put(1.3,1){Figure~4. Deformation of a}
\end{picture}\hspace{0mm}
\setlength{\unitlength}{5mm}
\begin{picture}(10,10)
\linethickness{0.05mm}
\qbezier(2,4.75)(3,5)(4,5)
\qbezier(4,5)(5,5)(6,5.25)
\qbezier(4.25,7)(4,6)(4,5)
\qbezier(4,5)(4,4)(3.75,3)
\put(2.5,4.75){\oval(3,2)[l]}
\put(5.5,5.25){\oval(3,2)[r]}
\put(4.25,6.5){\oval(2,3)[t]}
\put(3.75,3.5){\oval(2,3)[b]}
\qbezier(2.5,3.75)(2.75,3.75)(2.75,3.5)
\qbezier(4.75,3.5)(4.75,4.25)(5.5,4.25)
\qbezier(5.5,6.25)(5.25,6.25)(5.25,6.5)
\qbezier(3.25,6.5)(3.25,5.75)(2.5,5.75)
\put(2,4.75){\circle*{.2}}
\put(6,5.25){\circle*{.2}}
\put(4.25,7){\circle*{.2}}
\put(3.75,3){\circle*{.2}}
\put(.9,3){$W_{m,\frac{1}{2}}$}
\put(4.2,5.7){$T_{m,\frac{1}{2}}$}
\put(0,1){single curve}
\end{picture}
\setlength{\unitlength}{5mm}
\begin{picture}(8,10)
\linethickness{0.05mm}
\qbezier(3,1.5)(3.5,2.5)(3,4.5)
\qbezier(3,4.5)(2.5,6.5)(3,7.5)
\put(3,4.5){\oval(2,8)}
\put(3,1.5){\circle*{.2}}
\put(3,4.5){\circle*{.2}}
\put(3,7.5){\circle*{.2}}
\put(0.2,3.8){$W_{m,1}$}
\put(2.75,6.1){$T_{m\hspace{-.3mm},\hspace{-.3mm}1}$}
\end{picture}
\vspace{-4mm}

As long as $z_{m,t}$ does not meet a critical point of $A_{m,t}$ it is
smooth. Along the interval $t\in[0,1]$ the critical values of
$A_{m,t}$ move along the values of $A_m$ on $T_m$. This shows that for
$z_m\in W_m\setminus T_m$ these curves are smooth. Furthermore, for
any $t\in[0,1]$ the maps $z_m \mapsto z_{m,t}$ are biholomorphic maps from
$W_m\setminus T_m$ onto the complement $W_{m,t}\setminus T_{m,t}$ of
the union of finitely many compact curves $T_{m,t}$ in an open subset
$W_{m,t}$. On the deformations $T_{m,t}$ of $T_m$ the deformed
polynomial $A_{m,t}$ takes the same values as the undeformed
polynomial $A_m$ on $T_m$.  For $t\neq0,1$ two curves of $T_{m,t}$
intersect each other at the critical points of $A_{m,t}(z_{m,t})$ as
in Figure~4.

We  glue the deformed
subsets $(W_{m,t})_{m\in M}$ of $\C$ parameterised by $z_{m,t}$ in
such a way with the undeformed $\CP\setminus\bigcup_{m\in M}S_m$, that
the values of the polynomials $A_{m,t}(z_{m,t})$ coincide on
$W_{m,t}\setminus T_{m,t}$ with the values of the initial polynomials
$A_m(z_m)$ on $W_m\setminus T_m\simeq V_m\setminus S_m$ :\vspace{1mm}
\begin{equation}\label{diagram}
\setlength{\unitlength}{5mm}
\begin{picture}(28,1.8)
\linethickness{0.05mm}
\put(5,.7){\vector(1,-2){.5}}
\put(6.6,.7){\vector(-1,-2){.5}}
\put(5.5,1.3){$\simeq$}
\put(1,1.2){$S_m\!\subset\!V_m\!\ni\!\lambda$}
\put(6.6,1.2){$z_m\!\in\!T_m\!\subset\!W_m$}
\put(0.7,0){$(h\!-\!2n_m\pi\mi)^2$}
\put(6.6,-0.1){$A_m$}
\put(5.5,-1.3){$\C$}
\put(20,.7){\vector(1,-2){.5}}
\put(21.6,.7){\vector(-1,-2){.5}}
\put(20.5,1.3){$\simeq$}
\put(15.9,1.2){$W_m\!\setminus\!T_m\!\ni\!z_m$}
\put(21.6,1.2){$z_{m,t}\!\in\!W_{m,t}\!\setminus\!T_{m,t}$}
\put(19,-0.1){$A_m$}
\put(21.6,-0.1){$A_{m,t}$}
\put(20.5,-1.3){$\C$}
\end{picture}\vspace{5mm}
\end{equation}
The initial function $\Delta_0$ on
$\CP\setminus\bigcup_{m\in M}T_m$ is on $W_m\setminus T_m$ equal
to $2\cosh(\sqrt{A_m(z_m)})$. It extends as
$2\cosh(\sqrt{A_{m,t}(z_{m,t})})$ holomorphically to
$W_{m,t}$. We obtain a new copy of $\CP$ with two
marked points $\lambda=0$ and $\lambda=\infty$ and a holomorphic map
$\Delta_t$ from the complement of these marked points to $\C$. Due to
condition~\eqref{cond reality} the anti-holomorphic involution
$\lambda\mapsto\bar{\lambda}^{-1}$ of
$\CP\setminus\bigcup_{m\in M}T_m$ extends to a
global involution of this copy of $\CP$. By uniformisation there
exists a global parameter $\lambda_t$ on this Riemann surface, which
takes at the marked points the values $0$ and $\infty$ and transforms
with respect to the anti-holomorphic involution like
$\lambda_t\mapsto\bar{\lambda}^{-1}_t$. This parameter is unique up to
rotation $\lambda_t\mapsto e^{\mi\varphi}\lambda_t$. For any
$t\in[0,1]$ and any choice of the global parameter $\lambda_t$ the
function $\Delta_0$ on $\CP\setminus\bigcup_{m\in M}T_m$ extends to
a holomorphic function $\Delta_t$ depending on the global parameter
$\lambda_t\in \C^\ast$, which is on $V_{m,t}$ equal to
$2\cosh(\sqrt{A_{m,t}(z_{m,t})})$. We may fix the degree of freedom of
rotations, by assuming that the values $\lambda_{1,t}$ and
$\lambda_{2,t}$ of $\lambda_t$ at the Sym points $\lambda_1$ and
$\lambda_2$ in the $\lambda$-plane of $\Delta_0$ obey
$\lambda_{1,t}\lambda_{2,t}^{-1}=\lambda_1^{-1}\lambda_2$. This
normalisation preserves condition~(v) in Definition~\ref{spectral data}.
Let $a_t$ be the unique polynomial obeying \eqref{eq:a_reality}, whose
roots are the values of $\lambda_t$ at the roots of $a_0$ in the
$\lambda$-plane $\CP\setminus\bigcup_{m\in M}T_m$ of
$\Delta_0$. Since $a$ has $2g$ roots in
$\C^\ast\setminus\bigcup_{m\in M}T_m$, $a_t$ has $2g$ roots in
$\lambda_t\in\C^\ast$.  By construction, the 2-valued function $\mu$ with
$\Delta_0(\lambda)=\mu+\mu^{-1}$ on
$\CP\setminus\bigcup_{m\in M}T_m$ extends to a unique 2-valued
function $\mu_t$ depending on $\lambda_t\in\CP$ with
$\mu_t+\mu_t^{-1}=\Delta_t(\lambda_t)$. The corresponding 1-form
$dh_t$ is meromorphic on the spectral curve induced by
$a_t$ of the form \eqref{eq:def_b} with a unique polynomial
$b_t$. Together with $\lambda_{1,t}$ and $\lambda_{2,t}$ we obtain a
family of spectral data
$(a_t,b_t,\lambda_{1,t},\lambda_{2,t})_{t\in[0,1]}\in\moduli^g$.
By construction $\lambda_t\mapsto a_t(\lambda_t)
b_t(\lambda_t)(\lambda_t-\lambda_{1,t})(\lambda_t-\lambda_{2,t})$ has
pairwise different roots for $t\ne 1$.

Finally we show that this family of spectral data is smooth, since it
solves an ordinary differential equation with smooth coefficients. The
$t$-derivative of $\mu(\beta(t),t)$ at a simple root $\beta(t)$ of $b$
is equal to $\frac{d}{dt}\mu(\beta(t),t)=
\partial_\lambda\mu(\beta(t),t)\dot{\beta}(t)+\partial_t\mu(\beta(t),t)=
\frac{c(\beta(t))}{\nu\beta(t)}$. It does not depend on $\dot{\beta}(t)$ and
is determined by $c(\beta(t))$. For roots $\beta$ of $b$ on $\Sp^1$
the polynomial $c(\lambda)=\frac{\lambda+\beta}{\lambda-\beta}b(\lambda)$
vanishes at all other roots of $b$. For roots
$\beta\in\C^\ast\setminus\Sp^1$ the polynomial
$c(\lambda)=(\frac{C}{\lambda-\beta}-
\frac{\bar{C}\lambda}{1-\bar{\beta}\lambda})b(\lambda)$ vanishes at all roots
of $b$ besides $\beta$ and $\bar{\beta}^{-1}$. Therefore we may change
the values of $\mu$ at the simple roots of $b$
independently. Furthermore, the values of $c$ at the roots of $b$
determine $c$ up to adding to $c$ a real multiple of $\mi b$. The
vector field corresponding to $c=\mi b$ describes the rotations
$\lambda\mapsto e^{\mi t}\lambda$. After adding to $c$ a real
multiple of $\mi b$ the sum of the values of $\frac{c}{b}$ at
$\lambda_1$ and $\lambda_2$ vanishes. Due to \eqref{eq:integrability_2}
this normalisation preserves $\lambda_1\lambda_2^{-1}$. In particular,
the smooth curves
$t\mapsto\Delta_0(\beta_1(t)),\ldots,t\mapsto\Delta_0(\beta_{g+1}(t))$
determine together with the normalisation
$\frac{d}{dt}\lambda_1\lambda_2^{-1}=0$ smooth $t$-dependent $c$ on
the space of spectral data $(a,b,\lambda_1,\lambda_2)$ with pairwise
different roots of $\lambda\mapsto
a(\lambda)b(\lambda)(\lambda-\lambda_1)(\lambda-\lambda_2)$, such
that the values of $\Delta$~\eqref{eq:delta} at the roots of $b$ of the
corresponding solution of
\eqref{eq:integrability_1}-\eqref{eq:integrability_2} follow the given
curves
$t\mapsto\Delta_0(\beta_1(t)),\ldots,t\mapsto\Delta_0(\beta_{g+1}(t))$.
This implies that this family of spectral data is for $t\ne1$ indeed
smooth. In case of two intersecting curves $\beta_i$ and $\beta_j$
like in Figure~3 at $t=1$ the critical values of $A_{m,t}$ at the two
coalescing roots of $b_t$ does not depend smoothly on $t$. But  this
family depends together with the coefficients of $(A_{m,t})_{m\in M}$ for
$t=1-(1-\tilde{t})^2$ smoothly on $\tilde{t}\in[0,1]$.
\end{proof}
The proof even shows that there exists a family of spectral data
$(a_t,b_t,\lambda_{1,t},\lambda_{2,t})_{t\in\mathcal{T}}\in\moduli^g$
parameterised by tuples
$t=(t_1,\ldots,t_{g+1})\in\mathcal{T}\subset[0,1]^{g+1}$ with the values
$h(\beta_i(t_i))=h(\beta_i(-t_i))$ at the roots of $b_t$. Here
$\mathcal{T}$ is due to condition~\eqref{cond reality} characterised
by the equations $t_i=t_j$ on the parameters of conjugated initial roots
$\beta_j(0)=\bar{\beta}_i^{-1}(0)$.

The following Lemma is used in Section~\ref{sec:isolated}.
\begin{lemma}\label{smooth moduli}
Let $\tilde{\moduli}^g$ be the space of $(a,b,\lambda_1,\lambda_2)\in
\C^{2g}[\lambda]\times\C^{g+1}[\lambda]\times\Sp^1\times\Sp^1$ obeying
(i)-(v) in Definition~\ref{spectral data} without the
inequality in \eqref{eq:a_reality}, such that
$\lambda_1\ne\lambda_2$, $b(\lambda_1)\ne0\ne b(\lambda_2)$ and
$\frac{b}{a}$ has first order poles at all common roots of $a$ and
$b$. It is a real $g+1$-dimensional submanifold. The equations
\eqref{eq:integrability_1} and \eqref{eq:integrability_2}
identify $T\tilde{\moduli}^g$ with $c\in\C^{g+1}[\lambda]$ obeying
\eqref{eq:c_reality} and \eqref{eq:normalisation}.
\end{lemma}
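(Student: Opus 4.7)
The plan is to exhibit $\tilde{\moduli}^g$ as a regular level set inside a smooth ambient manifold, and then identify its tangent space via the polynomial $c$ introduced in \eqref{eq:def_c}. First I would take as ambient space $X$ the open subset of $\C^{2g}[\lambda]\times\C^{g+1}[\lambda]\times\Sp^1\times\Sp^1$ cut out by the reality relations alone (without the inequality in \eqref{eq:a_reality}), together with the open conditions $\lambda_1\ne\lambda_2$, $b(\lambda_j)\ne 0$, and that $b/a$ has at worst first-order poles at its common roots. A coefficient count under the reality involutions yields $\dim_\R X=(2g+1)+(g+2)+1+1=3g+5$. Away from crossings the integers $n_i$ appearing in conditions (iii) and (iv) are locally constant, and the remaining content of (ii)-(v) becomes the vanishing of a smooth $\R^{2g+4}$-valued map $\Phi$ on $X$: namely $g$ real period conditions from (ii), $g$ quantization equations $h(\alpha_i)=n_i\pi\mi$ from (iii), two closing equations at the Sym points from (iv), and $|a(0)|=\tfrac{1}{16}$ together with $\lambda_2\lambda_1=1$ from (v).

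The core step is computing the kernel of $d\Phi$. For a tangent vector $(\dot a,\dot b,\dot\lambda_1,\dot\lambda_2)\in TX$ I would show that $\partial_t h$ extends meromorphically to the spectral curve as $c(\lambda)/(\nu\lambda)$ with $c$ a polynomial of degree at most $g+1$: the hypothesis that $b/a$ has only first-order poles at common roots is precisely what forces $\partial_t h$ to have at worst simple poles at the branch points, and hence that $c$ is indeed polynomial. The reality of $h$ on $\Sp^1$ yields \eqref{eq:c_reality}. Equating the mixed partial derivatives as in the derivation preceding \eqref{eq:integrability_1} recovers \eqref{eq:integrability_1}, which determines $(\dot a,\dot b)$ uniquely from $c$ and vice versa. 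The linearization of the Sym-point condition in (iv) translates into \eqref{eq:integrability_2}, while the linearization of $\lambda_2=\lambda_1^{-1}$ becomes \eqref{eq:normalisation}. The linearizations of (ii), (iii) and of $|a(0)|=\tfrac{1}{16}$ are automatic, since $\partial_t h=c/(\nu\lambda)$ is single-valued and meromorphic on the spectral curve with the prescribed pole structure.

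For surjectivity I would invoke Lemma~\ref{common roots} and Proposition~\ref{global existence}: given any $c$ obeying \eqref{eq:c_reality} and \eqref{eq:normalisation}, the system \eqref{eq:integrability_1}-\eqref{eq:integrability_2} together with $\dot\lambda_2/\lambda_2=-\dot\lambda_1/\lambda_1$ produces a candidate tangent vector at $(a,b,\lambda_1,\lambda_2)$, and those constructive results integrate this velocity to a smooth path in $\tilde{\moduli}^g$. Consequently the real $(g+1)$-dimensional space of admissible $c$ is exactly the kernel of $d\Phi$, so $d\Phi$ has constant rank $2g+4$, and $\tilde{\moduli}^g$ is a smooth submanifold of real dimension $3g+5-(2g+4)=g+1$ with the stated tangent identification. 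The main obstacle is the non-generic locus where $a$ has higher-order roots or shares roots with $b$: at such points one must verify both that no spurious poles enter $c$ and that \eqref{eq:integrability_1} remains solvable for $c$ in terms of $(\dot a,\dot b)$, and the first-order-pole hypothesis on $b/a$ is tailored exactly to make this work.
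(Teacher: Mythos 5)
Your set-up --- the ambient space of real dimension $3g+5$, the count of $2g+4$ conditions, and the identification of kernel vectors with polynomials $c$ via $\partial_th=c/(\nu\lambda)$ --- coincides with the first half of the paper's proof, and in the generic case where $a$ and $b$ have no common root your argument is essentially theirs. The genuine gap is the common-root case, which you defer to a closing sentence (``the first-order-pole hypothesis on $b/a$ is tailored exactly to make this work'') but which is precisely the case the lemma is designed to cover and which occupies most of the paper's proof. At a common root $\alpha$ (so $\ord_\alpha a=\ord_\alpha b+1\ge 2$) the polynomial identity \eqref{eq:integrability_1} does \emph{not} determine $(\dot a,\dot b)$ from $c$: as the paper's own remark inside the proof points out, a double root of $a$ together with a coincident simple root of $b$ can be moved without changing $h$ at all, so this motion is invisible to $c$, and it is only the quantization $h(\alpha)\in\pi\mi\Z$ coming from conditions~(iii)--(iv) of Definition~\ref{spectral data} that pins down $\dot\alpha$. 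Your assertion that the linearizations of (ii) and (iii) are ``automatic'' therefore discards exactly the equation that is needed there. The paper resolves this by a separate local argument: around each root of $b$ it introduces a coordinate $z_m$ with $(h-n_m\pi\mi)^2=A_m(z_m)$, parameterizes nearby spectral data by perturbed polynomials $\tilde A_m$, obtains a chart $\Phi:O\to B\subset\R^{g+1}$, and checks that $\Phi'$ is injective on the $(g+1)$-dimensional space of admissible $c$'s. Nothing in your proposal replaces this step.

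Two further points. First, your surjectivity argument invokes Lemma~\ref{common roots} and Proposition~\ref{global existence}, but neither applies at the points in question: Lemma~\ref{common roots} assumes $a$ has simple roots, and Proposition~\ref{global existence} assumes $\Delta_0$ avoids $\pm2$ at the roots of $b_0$, whereas at a common root of $a$ and $b$ with a first-order pole of $b/a$ the polynomial $a$ has a root of order at least two and $\Delta=\pm2$ at that root of $b$. (In the implicit-function-theorem framework one does not in fact need to integrate the velocities; it suffices to compute the kernel of the differential exactly, which is what the paper does.) Second, a smaller inaccuracy: the reason $\partial_th$ takes the form $c/(\nu\lambda)$ with $c$ polynomial is that the genus, i.e.\ the branch-point structure, is held fixed along the deformation; the first-order-pole hypothesis on $b/a$ is not what controls the pole order of $\partial_th$, but rather what makes the correspondence between tangent vectors and polynomials $c$ bijective.
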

\begin{proof}
The space of $(a,b,\lambda_1,\lambda_2)$ obeying condition (i) in Definition~\ref{spectral data} without the inequality in \eqref{eq:a_reality} is a real $(2g+1)+(g+2)+2=3g+5$ dimensional space. Condition~(ii) states that the integral of $dh$ along $g$ independent cycles of $\Sigma$ vanishes. This contains $g$ linear independent conditions. The normalization~(vi) contains two linear independent conditions. Hence conditions~(i)-(ii) and (vi) describe a $2g+3$-dimensional subspace. Condition~(iii) states that the integrals of $dh$ along $g$ further independent cycles of $\Sigma$ takes values in $2\pi\mi\Z$. Condition~(iv) assumes that the integrals of $dh$ along two paths connecting both points over $\lambda_1$ and $\lambda_2$ takes values in $2\pi\mi\Z$. Hence $\tilde{\moduli}^g$ is the subset of a $2g+3$-dimensional space defined by $g+2$ real functions taking values in $2\pi\Z$. We show that all tangent vectors in the kernel of the derivatives of these functions form a $g+1$-dimensional space and apply the implicit function theorem.

For a tangent vector $(\dot{a},\dot{b},\dot{\lambda}_1,\dot{\lambda}_2)$  at $(a,b,\lambda_1,\lambda_2)\in\tilde{\moduli}^g$ in the kernel of the derivatives of these functions the integral of the 1-form $d\dot{h}$ in Definition~\ref{spectral data}~(iii) along any closed cycle of $\Sigma$ vanishes. The derivative $\dot{h}$ defines a single-valued meromorphic function on the corresponding spectral curve $\Sigma$. Due to the properties of $h$ it is of the form~\eqref{eq:def_c} with a polynomial $c$ of degree $g+1$ obeying \eqref{eq:c_reality} and \eqref{eq:integrability_1}. Equation~\eqref{eq:integrability_2} determines $\dot{\lambda}_1$ and $\dot{\lambda}_2$ in terms of $c$. If $a$ and $b$ do not have a common root, then \eqref{eq:integrability_1} uniquely determines $\dot{a}$ and $\dot{b}$ in terms of $c$. Due to the implicit function theorem $\tilde{\moduli}^g$ is at $(a,b,\lambda_1,\lambda_2)$ a submanifold with $T\tilde{\moduli}^g$ isomorphic to the $g+1$ dimensional space of polynomials $c$ obeying~\eqref{eq:c_reality} and \eqref{eq:normalisation}.
\begin{remark}
The relation~\eqref{eq:integrability_1} between $c$ and
$(\dot{a},\dot{b})$ does not encode
Definition~\ref{spectral data}~(iv). Without this condition we can add
to the polynomial $a$ without changing $h$ a double root $\alpha\in\Sp^1$ or a pair
of double roots $\alpha,\bar{\alpha}^{-1}\in\C^\ast\setminus\Sp$
together with the same simple roots of $b$. The movement of
such double roots is not controlled by $c$. Due to
condition~(iv) we can add such roots of $a$ and
$b$ only at points with $\mu=\pm 1$. If $\frac{b}{a}$ has simple poles
at common roots, then $c$ determines $(\dot{a},\dot{b})$.
\end{remark}
It remains to consider the case of common roots of $a$ and $b$ with
simple poles of $\frac{b}{a}$. We describe $\Sigma$ by the function
$\Delta$~\eqref{eq:delta} and use the methods of
Lemma~\ref{common roots} (compare \cite[Section~9.2]{HKS2}.) We
remove small disjoint discs $V_m\subset\C^\ast$ around the roots of
$b$ from the compactified $\lambda$-plane $\CP$ of
$\Delta$~\eqref{eq:delta}. Each $V_m$ contains only one root of
$\lambda\mapsto
a(\lambda)b(\lambda)(\lambda-\lambda_1)(\lambda-\lambda_2)$. On each
$V_m$ we choose a single-valued branch of $h$ and an integer
$n_m$ such that $h-n_m\pi\mi$ vanishes only at the common root of
$a$ and $b$ in $V_m$ (if there is one). Due to~\eqref{eq:poles}
$\frac{h-n_m\pi\mi}{\nu}$ is holomorphic on $V_m$ without
roots. Therefore the roots of $h-n_m\pi\mi$ coincide with the
roots of $a$ in $V_m$. The discs $V_m$ have unique coordinates $z_m$
which vanish at the root of $b$ in $V_m$ with
\begin{align}\label{eq:undeformed_2}
A_m(z_m)&=z_m^{d_m}+a_{m,d_m}=(h-n_m\pi\mi)^2.
\end{align}
Here $d_m-1$ is the order of the root of $b$ in $V_m$. For common
roots, $d_m$ is the order of the root of $a$. $V_m$ is mapped
by $z_m$ biholomorphically onto small discs $W_m\subset\C$.

For all $(\tilde{a},\tilde{b},\tilde{\lambda}_1,\tilde{\lambda}_2)$
in a sufficiently small neighbourhood $O\subset\tilde{\moduli}^g$ of
$(a,b,\lambda_1,\lambda_2)$ the corresponding branches
$\tilde{h}-n_m\pi\mi$ are well defined on the deformed subsets
$\tilde{V}_m$ of the compactified $\lambda$-plane
of the deformed $\tilde{\Delta}$~\eqref{eq:delta}. Furthermore the
function $(\tilde{h}-n_m\pi\mi)^2$ takes for all $m\in M$ on the
boundary $\partial\tilde{V}_m$ the same values as $A_m$ on the
boundary of $W_m$. We glue the complement $\CP\setminus W_m$ of the
compactified $z_k\in\CP$ plane in such a way along the boundary of
$\tilde{V}_m$ that $A_m(z_m)$ coincides on $\partial W_m$ with
$(\tilde{h}-n_m\pi\mi)^2$ on $\partial\tilde{V}_m$. This yields a new
copy of $\CP$. By uniformisation there exists a global coordinate
$\tilde{z}_m$ on the new copy of $\CP$ with a pole at the pole of
$z_m$ in $\CP\setminus W_m$. It is unique up to M\"obius
transformations fixing $\tilde{z}_m=\infty$. The function
$A_m(z_m)=(h-n_m\pi\mi)^2$~\eqref{eq:undeformed_2} on $\CP\setminus W_m$
extends as $(\tilde{h}-n_m\pi\mi)^2$ to $\tilde{W}_m$. This function is
meromorphic on the new copy of $\CP$ with a single pole at the pole of
$\tilde{z}_m$ of degree $d_m$. Therefore this function is a polynomial
$\tilde{A}_m$ with respect to $\tilde{z}_m$ of degree $d_m$. After a
unique M\"obius transformation of the coordinate $\tilde{z}_m$ fixing
$\tilde{z}_k=\infty$ we have
\begin{align}\label{eq:deform_2}
\tilde{A}_m(\tilde{z}_m)&=
\tilde{z}_m^{d_m}+\tilde{a}_{m,2}\tilde{z}^{d_m-2}+\ldots+\tilde{a}_{m,d_m}
=(\tilde{h}-n_m\pi\mi)^2.
\end{align}
The coefficients $\tilde{a}_{m,2},\ldots,\tilde{a}_{m,d_m}$ belong to
a small neighbourhood of the corresponding coefficients of $A_m$. The
map $\lambda\mapsto\bar{\lambda}^{-1}$ interchanges the roots of $b$
and therefore also the discs $(V_m)_{m\in M}$ and the polynomials
$(A_m)_{m\in M}$. Since this symmetry is preserved, the coefficients of
those not necessarily different polynomials have complex conjugate
coefficients, which are interchanged by this symmetry. The sum of the
real degrees of freedom of the coefficients of
$(\tilde{A}_m)_{m\in M}$ is $\deg b=g+1$. We
obtain a space of polynomials $(\tilde{A}_m)_{m\in M}\in B$
of small perturbations of $(A_m)_{m\in M}$ parameterised by a small ball
in $\R^{g+1}$. This yields a map
\begin{align*}%\label{eq:map}
\Phi:O&\to
B\subset\R^{g+1}&
(\tilde{a},\tilde{b},\tilde{\lambda}_1,\tilde{\lambda}_2)&\mapsto
(\tilde{A}_m)_{m\in M}.
\end{align*}
In order to apply the implicit function theorem it remains to show
that $\Phi'$ maps those tangent vectors
$(\dot{a},\dot{b},\dot{\lambda}_1,\dot{\lambda}_2)$ at
$(a,b,\lambda_1,\lambda_2)$ injectively to
$T_{(A_m)_{m\in M}}B=\R^{g+1}$ which correspond to polynomials $c$
of degree $g+1$ obeying \eqref{eq:c_reality} and
\eqref{eq:normalisation}. Due to \eqref{eq:def_c},\eqref{eq:def_b} and
\eqref{eq:deform_2} we have
\begin{align*}
\frac{\partial_t\tilde{h}}{\partial_\lambda h}=
\frac{\lambda c}{b}&=
\frac{\dot{\tilde{A}}_m}{A'_m}\frac{d\lambda}{dz_m}
\end{align*}
on $V_m$. Since $\lambda\mapsto z_m$ is biholomorphic from $V_m$
onto $W_m$, $\frac{d\lambda}{dz_m}$ does not vanish on $V_n$. Hence
the singular parts of $\frac{\lambda c}{b}$ at the roots of $b$ are in
one-to-one correspondence to $(\dot{\tilde{A}}_m)_{m\in M}$.
\end{proof}
%
%%%%%%%%%%%%%%%%%%%%%%%%%%%%%%%%%%%%%%%%%%%%%%%%%%%%%%%%%%%%%%%
%%%%%%%%%%%%%%%%%%%%%%%%%%%%%%%%%%%%%%%%%%%%%%%%%%%%%%%%%%%%%%%
%
%
\section{Paths to spectral genus zero}\label{sec:pathconnected}
In this section we construct a piecewise continuous path (see
Definition~\ref{def piecewise continuous}) in $\moduli_+$ connecting an
arbitrary starting point with an endpoint in $\moduli_+^0$.
In the proof we control the deformation of the spectral data by choosing
piecewise appropriate polynomials $c$.
\begin{theorem}\label{t1}
At any $(a,b,\lambda_1,\lambda_2)\in\moduli^g_+$ there
starts a compact piecewise continuous path $\gamma$ in $\moduli_+$ to
$\moduli_+^0$. Along $\gamma$ the spectral genus increases at most by one
at a multiple root of $a$ in $\Sp^1$.
\end{theorem}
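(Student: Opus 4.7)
The proof proceeds by induction on the spectral genus $g$. The base case $g=0$ is vacuous, so assume $g\ge 1$ and let $(a,b,\lambda_1,\lambda_2)\in\moduli^g_+$ be given. The inductive step consists of producing a continuous path in $\moduli^g_+$ to spectral data $(a_1,b_1,\lambda_1,\lambda_2)$ for which the polynomial $a_1$ has a double root on $\Sp^1$. Applying Lemma~\ref{jump} with a linear $p$ having its unique root there then yields a jump to $(a_1/p^2,\,b_1/p,\,\lambda_1,\lambda_2)\in\moduli^{g-1}_+$. Iterating $g$ times produces the required compact, piecewise continuous path to $\moduli_+^0$, and every jump raises the genus by at most one, at a multiple root of $a$ on $\Sp^1$.

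To construct each continuous stage I apply Proposition~\ref{global existence}. I select curves $\beta_1,\dots,\beta_{g+1}$ in the $\lambda$-plane respecting conditions~\eqref{cond embedding}-\eqref{cond reality}, starting at the $g+1$ roots of the initial $b$. The proposition returns a smooth family $(a_t,b_t,\lambda_{1,t},\lambda_{2,t})\in\moduli^g$ along which $\Delta_t$~\eqref{eq:delta} takes at the roots of $b_t$ the prescribed values $\Delta_0(\beta_i(t))$. The branch points of $\Sigma$ (i.e.\ the roots of $a$) are precisely the points where $\mu=\pm1$, equivalently where $\Delta=\pm2$; hence steering the $\beta_i$'s so that certain target values $\Delta_0(\beta_i(1))$ approach $\pm2$ forces a pair of branch points of $\Sigma$ to migrate. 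The sign condition $\lambda^{-g}a(\lambda)\le 0$ on $\Sp^1$ in~\eqref{eq:a_reality} forces any root of $a$ lying on $\Sp^1$ to have even order; so as soon as the chosen pair $(\alpha,\bar\alpha^{-1})$ reaches $\Sp^1$ it coalesces into a double root there, providing exactly the factor $p^2$ required by Lemma~\ref{jump}.

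Compactness of the path and the absence of curvature blow-ups are controlled by the normalisation $|a(0)|=\tfrac{1}{16}$ together with the boundedness of the curves $\beta_i$: the roots of $a_t$ stay in a compact subset of $\C^\ast$, so by Lemma~\ref{open and proper} the preimage $A^{-1}[\gamma]$ is relatively compact, and Proposition~\ref{uniform bound} gives uniform $\mathrm{C}^{k,\alpha}$-bounds along $\gamma$. The reality conditions \eqref{eq:a_reality}, \eqref{eq:b_def} and the closing condition (iv) of Definition~\ref{spectral data} are maintained automatically by Proposition~\ref{global existence} together with the symmetry clause~\eqref{cond reality}; non-vanishing of $b$ at $\lambda_1,\lambda_2$ follows from clause~\eqref{cond embedding}; and the normalisation $\lambda_2=\lambda_1^{-1}$ is built into the construction of $\lambda_t$.

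The main obstacle is designing the curves $\beta_i$ so that (i) the desired pair of roots of $a$ actually reaches $\Sp^1$, and (ii) no geometric accident occurs along the way, namely either a coalescence of roots of $a$ \emph{off} $\Sp^1$, or a root of $b_t$ sliding onto $\lambda_1$ or $\lambda_2$. If a direct choice of $\beta_i$'s leads to such an obstruction I plan to subdivide the stage and insert intermediate jumps via Lemma~\ref{jump} that temporarily raise the genus by one by adjoining a double root of $a$ at a suitably chosen point of $\Sp^1$ (where $\mu=\pm1$ already holds and \eqref{eq:poles} has at worst a simple pole); this yields the extra degree of freedom needed to reroute the continuation. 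This is exactly the freedom permitted by the statement of the theorem. After finitely many such sub-stages every pair of roots of $a$ has been brought to $\Sp^1$ and removed, and $\gamma$ terminates in $\moduli_+^0$.
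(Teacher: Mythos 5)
Your outline captures the correct skeleton of the argument --- use Proposition~\ref{global existence} to prescribe the values of $\Delta$ at the roots of $b$, drive a $\rho$-symmetric pair of roots of $a$ to coalesce on $\Sp^1$, remove the resulting double root with Lemma~\ref{jump}, and iterate --- and this is indeed the strategy of the paper's step~7. But the proposal defers essentially all of the actual content to the sentence ``if a direct choice of $\beta_i$'s leads to such an obstruction I plan to subdivide the stage and insert intermediate jumps,'' and the obstructions are exactly where the work lies. Three concrete gaps. First, nothing in your argument keeps the path in $\moduli_+$: a generic deformation of the roots of $b$ moves the Sym points via \eqref{eq:integrability_2} and can drive the short arc to length $\pi$ and beyond, i.e.\ push $H$ through $0$. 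The paper controls this with Lemma~\ref{l1} (which sign of $c$ at a root of $b$ in the short versus long arc shrinks the short arc) and, in Lemma~\ref{l3}, with a two-parameter family in which the unique root of $b$ in the short arc is moved simultaneously so as to hold the arc length fixed. Without this your path may exit $\moduli_+$, and the theorem as stated fails.

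Second, the allowed genus increase is not a generic escape hatch; it occurs at one specific, identifiable point. Before the reduction can start one must arrange that the short arc contains exactly one root of $b$ and that $\int dh$ over the short arc vanishes. If after clearing the short arc no root of $b$ remains there, that integral is a nonzero multiple of $2\pi\mi$, and the only remedy is to adjoin a double root of $a$ (and a simple root of $b$) at the interior point of the short arc where $\mu=\pm1$, then open the double root off $\Sp^1$ via Lemma~\ref{smooth moduli}. That is the single place the genus goes up by one; your proposal neither locates it nor explains why finitely many such insertions suffice, so the quantitative clause of the theorem is unsupported. Third, the claim that you can steer a chosen pair $(\alpha,\bar\alpha^{-1})$ onto $\Sp^1$ by pushing some $\Delta_0(\beta_i(1))$ toward $\pm2$ presupposes knowing \emph{which} root of $b$ governs \emph{which} pair of roots of $a$; this is the content of Lemma~\ref{l4} (each simple root of $b$ on $\Sp^1$ lies on a level curve of ${\rm Re}\,h=0$ joining a $\rho$-symmetric pair of roots of $a$, or joining $\lambda=0$ to $\lambda=\infty$), combined with Lemma~\ref{l2}, which says in which direction $\arg\mu$ must move for the coalescence to occur on $\Sp^1$ rather than producing an accident off it. You also skip the preliminary normalisation (separating higher-order and common roots of $a$ and $b$, where the vector field \eqref{eq:integrability_1} is singular and Lemma~\ref{common roots} is needed), without which Proposition~\ref{global existence} does not apply at the starting point.
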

\begin{proof}
We prove this theorem in seven steps. In each step we choose an
appropriate polynomial $c$, such that the corresponding solution of
the ordinary differential equations
\eqref{eq:integrability_1}-\eqref{eq:integrability_2} describes a path
with specified properties. For this purpose we exhibit how
the choice of $c$ controls the movement of the roots of $a$, the roots
of $b$ and the Sym points $\lambda_1$ and $\lambda_2$. Let us assume
that there is given a path of spectral data parameterised by $t$, which
solves~\eqref{eq:integrability_1} with some smooth family of
$c$'s. The corresponding equations~\eqref{eq:curve} define a
family of hyperelliptic algebraic curves. These curves are two-sheeted
coverings over $\lambda\in\CP$. We consider the functions $\mu$ on
these two sheeted coverings as two-valued functions depending on
$\lambda$ and the deformation parameter $t$. The hyperelliptic
involution $\sigma$~\eqref{eq:involutions} interchanges the two
branches of $\mu(\lambda,t)$ and acts by $\mu\mapsto\mu^{-1}$. Now let
$t\mapsto\lambda(t)$ be a path in $\C^\ast$, such that the
function $t\mapsto\mu(\lambda(t),t)$ is constant along the former path
of spectral data. Due to \eqref{eq:def_c} and \eqref{eq:def_b} the
latter path $t\mapsto\lambda(t)$ obeys
\begin{align}\label{eq:vector}
\frac{d\mu(\lambda(t),t)}{dt}=
\frac{\partial\mu(\lambda(t),t)}{\partial\lambda}\dot{\lambda}(t)+
\frac{\partial\mu(\lambda(t),t)}{\partial t}&=0,&
\frac{\dot{\lambda}(t)}{\lambda(t)}&=-\frac{c(\lambda(t))}{b(\lambda(t))}.
\end{align}
The roots of $a$ and the Sym points \eqref{eq:integrability_2} move
along paths on which $\mu$ is constant. Finally, simple roots of $b$,
which are also roots of $c$, also move along paths on which $\mu$ is
constant. In this case the right hand side in the differential
equation should be replaced by the unique holomorphic extension of the
quotient $-\frac{c}{b}$ to the common roots of $b$ and
$c$. Thus~\eqref{eq:vector} describes the movement of several
significant points. The $t$-derivative of $\mu(\beta(t))$ at a simple
root $\beta(t)$ of $b$ is equal to $\frac{d}{dt}\mu(\beta(t),t)=
\partial_\lambda\mu(\beta(t),t)\dot{\beta}(t)+\partial_t\mu(\beta(t),t)=
\frac{c(\beta(t))}{\nu\beta(t)}$. It does not depend on $\dot{\beta}(t)$ and
is determined by $c(\beta(t))$. Changing the values of $\mu$ at the
roots of $b$ implicitly changes the relative positions of the roots of
$a$, the roots of $b$ and the Sym points. At the roots of $a$ and the
Sym points we have $\mu=\pm 1$. We can avoid that the roots of $b$
meet the roots of $a$ and the Sym points by ensuring that the values
of $\mu$ (or $\Delta$) at the roots of $b$ stay away from $\mu=\pm1$
(or $\Delta=\pm2$).

The function $\mu$ transforms as $\sigma^\ast\mu=\mu^{-1}$ and
$\rho^\ast\mu=\bar{\mu}^{-1}$. It takes on the fixed point set
$\lambda\in\Sp^1$ of the involution $\rho$ \eqref{eq:involutions}
unimodular values. Therefore the argument of $\mu$ is on $\Sp^1$ a
function with values in $\R/2\pi\Z$. The two branches of $\arg\mu$
are the negative of each other. The critical points of $\arg\mu$
on $\Sp^1$ are the roots of $b$ on $\Sp^1$. In
particular, at simple roots of $b$ on $\Sp^1$, one branch of this
function has a local maximum and the other branch a local minimum.

We first apply in steps~1-3 small deformations to achieve that $a$ and
$b$ have pairwise different simple roots in
$\C^\ast\setminus\{\lambda_1,\lambda_2\}$. In step~4 we achieve in
addition that $\mu$ takes at the roots of $b$ in $\Sp^1$ pairwise different
values on $\Sp^1\setminus\{-1,1\}$ and at the other roots pairwise
different values on $\C^\ast\setminus\Sp^1$.

\noindent{\bf 1.} In step~1 we divide the polynomials $a$ and $b$
by polynomials $p^2$ and $p$, if $a$ has higher order roots, as
described in Lemma~\ref{jump}. We will meet such $a$ again only in
the last step.

\begin{minipage}[b]{118mm}
\noindent{\bf 2.} In step~2 we choose a small deformation,
which decreases the length of the short arc and eventually separates
the roots of $b$ from the Sym points $\lambda_1$ and $\lambda_2$.
This step is only required if initially $H=0$ or if $b$ vanishes at one
Sym point.

For polynomials $c$ obeying~\eqref{eq:c_reality} the meromorphic function
$-\frac{c}{b}$ takes imaginary values on $\Sp^1$. We choose $c$, which
does not vanish at the Sym points $\lambda_1$ and $\lambda_2$, such
that $-\frac{c}{b}$ has positive imaginary part at the
\end{minipage}
\hspace{1mm}\setlength{\unitlength}{5mm}
\begin{picture}(7,5)
\linethickness{0.05mm}
\put(3.5,3.5){\circle{3}}
\put(4,2.2){\circle*{0.2}}
\put(4.35,2.4){\vector(1,1){.3}}
\put(3.6,2.5){$\lambda_1$}
\put(4,4.8){\circle*{0.2}}
\put(4.35,4.6){\vector(1,-1){.3}}
\put(3.7,4.1){$\lambda_2$}
\put(0.5,3.5){long}
\put(.8,2.9){arc}
\put(5.1,3.5){short}
\put(5.3,2.9){arc}

\put(0,1){Figure~5. Sym points}
\end{picture}\vspace{-1.5mm}

end of the short arc nearby $\lambda_1$ and negative imaginary part
at the end of the short arc nearby $\lambda_2$. Since $c$ has degree
$g+1\ge2$ such polynomials exists. If $b$ does not vanish at
the Sym points, then due to~\eqref{eq:integrability_2} with this
choice of $c$ the Sym points are moved towards the interior of the
short arc.

Since $\arg\mu$ has only isolated critical points on $\Sp^1$,
the restriction of one branch to the short arc has at the boundary of
the short arc a local maximum, and the restriction of the other branch
has a local minimum. Due to \eqref{eq:def_c} the branch of
$\arg\mu$ with a local maximum at the end of the short arc strictly
increases (for increasing $t$) nearby the Sym point, and the
other branch strictly decreases.

In Figure~6 a family of graphs of
the function $\arg\mu$ on $\Sp^1$ with a local maximum at
$\arg\lambda_0$ for $t=t_0$ is shown. The corresponding pre-image
$\{(\arg\lambda,t)\mid\mu(\lambda,t)=\mu_0\}$ is shown in
Figure~7. At $(\lambda_0,t_0)$ we see a bifurcation into two
different paths $t\mapsto\lambda(t)$ in the pre-image
$\{(\lambda,t)\mid\mu(\lambda,t)=\mu_0\}$.
If the Sym point is a root of $b$, then the pre-image
$\{(\lambda,t)\mid\mu(\lambda,t)=\mu_0\}$ might contain several paths
ending and starting at the Sym point. Any such path in $\Sp^1$ (if
they exist) can be chosen as the corresponding Sym point along the
path $t\mapsto(a_t,b_t)$. With our choice of $c$ there
always exist two continuous paths $t\mapsto\lambda_1(t)$ and
$t\mapsto\lambda_2(t)$ for $t\ge 0$ moving the two Sym
points toward the interior of the short arc. By choosing
these paths as the Sym points along the path $t\mapsto(a_t,b_t)$, we
obtain a path of spectral data with strictly decreasing length of the
short arc. This shows that with our choice of $c$ an integral curve of
\eqref{eq:integrability_1} yields a path in $\moduli_+^g$ with
strictly increasing mean curvature. If $a$ and $b$ do not have common
roots, then the smooth vector field \eqref{eq:integrability_1} always
has a local solution. If $a$ and $b$ have common roots, then our
arguments apply for the specified $c$ to the corresponding path
constructed in Lemma~\ref{common roots}
\vspace{-3mm}

\hspace{1mm}\setlength{\unitlength}{5mm}
\begin{picture}(10,10)
\linethickness{0.05mm}
\put(0,5){\vector(1,0){9}}
\put(8.1,5.2){$\arg\lambda$}
\put(5.5,2){\vector(0,1){7}}
\put(5.7,8.6){$\arg\mu$}
\put(5.6,5.2){$\arg\mu_0$}
\put(4.35,3){$\arg\lambda_0$}
\qbezier(0,6)(1,3)(2.5,3)
\qbezier(2.5,3)(3,3)(4,4)
\qbezier(4,4)(5,5)(5.5,5)
\qbezier(5.5,5)(7,5)(8,2)
\qbezier(0,7)(1,4)(2.5,4)
\qbezier(2.5,4)(3,4)(4,5)
\qbezier(4,5)(5,6)(5.5,6)
\qbezier(5.5,6)(7,6)(8,3)
\qbezier(0,8)(1,5)(2.5,5)
\qbezier(2.5,5)(3,5)(4,6)
\qbezier(4,6)(5,7)(5.5,7)
\qbezier(5.5,7)(7,7)(8,4)
\qbezier(0,9)(1,6)(2.5,6)
\qbezier(2.5,6)(3,6)(4,7)
\qbezier(4,7)(5,8)(5.5,8)
\qbezier(5.5,8)(7,8)(8,5)
\put(1.65,3.3){$t\!\!=\!\!t_0$}
\put(1.65,4.3){$t\!\!=\!\!t_1$}
\put(1.65,5.3){$t\!\!=\!\!t_2$}
\put(1.65,6.3){$t\!\!=\!\!t_3$}
\put(0,1){Figure~6. Family of graphs}
\end{picture}
\setlength{\unitlength}{5mm}
\begin{picture}(10,10)
\linethickness{0.05mm}
\put(0,3){\vector(1,0){9}}
\put(8.1,3.2){$\arg\lambda$}
\put(5.5,2){\vector(0,1){7}}
\put(5.7,8.5){$t$}
\put(4.35,2.3){$\arg\lambda_0$}
\qbezier(0,2)(1,5)(2.5,5)
\qbezier(2.5,5)(3,5)(4,4)
\put(4.5,3.5){\vector(-1,1){0.5}}
\qbezier(4,4)(5,3)(5.5,3)
\qbezier(5.5,3)(7,3)(8,6)
\put(6.5,3.4){\vector(1,1){0.5}}
\put(5.6,3.1){$t_0$}
\put(5.4,3,8){-}\put(5.6,4.1){$t_1$}
\put(5.4,4,8){-}\put(5.6,5.1){$t_2$}
\put(5.4,5,8){-}\put(5.6,6,1){$t_3$}
\put(0,1){Figure~7. Pre-image of $\mu=\mu_0$}
\end{picture}\hspace{0mm}
\setlength{\unitlength}{5mm}
\begin{picture}(10,10)
\linethickness{0.05mm}
\put(0,5){\vector(1,0){9}}
\put(8.1,5.2){$\arg\lambda$}
\put(4,2){\vector(0,1){7}}
\put(4.2,8.6){$\arg\mu$}
\put(4.1,5.2){$\arg\mu_0$}
\put(2.85,3){$\arg\lambda_0$}
\qbezier(0,8)(1,8)(4,5)
\qbezier(0,2)(1,2)(4,5)
\qbezier(8,8)(7,8)(4,5)
\qbezier(8,2)(7,2)(4,5)
\qbezier(0,8.2)(1.5,8)(3,6.5)
\qbezier(3,6.5)(3.5,6)(4,6)
\qbezier(4,6)(4.5,6)(5,6.5)
\qbezier(5,6.5)(6.5,8)(8,8.2)
\qbezier(0,1.8)(1.5,2)(3,3.5)
\qbezier(3,3.5)(3.5,4)(4,4)
\qbezier(4,4)(4.5,4)(5,3.5)
\qbezier(5,3.5)(6.5,2)(8,1.8)
\put(4,6){\vector(0,-1){0.6}}
\put(4,4){\vector(0,1){0.6}}
\put(0,1){Figure~8. Coalescing roots of $a$}
\end{picture}\vspace{-3mm}

\noindent{\bf 3.} In step~3 we choose a small deformation which
separates the roots of $b$ from each other. In step~2 we achieved that
the short arc has length smaller then $\pi$. Hence we can follow any
deformation for a short time. The values of
$\partial_t\partial_\lambda h$~\eqref{eq:def_c} at some higher order
root $\beta\in\C^\ast$ of $b$ depends linearly on $c(\beta)$ and
$c'(\beta)$~\eqref{eq:derivative c}. Since $c$ has degree $g+1\ge 1$
there exist $c$ such that $\partial_t\partial_\lambda h$ does not vanish at
$\beta$. With the following lemma we may deform successively all
higher order roots of $b$ into simple roots.
\begin{lemma}\label{higher roots}
Let $c\in\C^{g+1}[\lambda]$ obey \eqref{eq:c_reality} such that the
1-form $d\frac{c}{\nu\lambda}$~\eqref{eq:def_c} does not vanish
at a higher order root of $b$ which is not a root of $a$. Then the
flow of the vector field~\eqref{eq:integrability_1} separates the
higher order root of $b$ after arbitrarily short time into several
simple roots. On $\Sp^1$ the 1-form $d\frac{c}{\nu\lambda}$ is
purely imaginary. One choice of sign of $d\frac{c}{\nu\lambda}$ at a
double root of $b$ on $\Sp^1$ separates the double root of $b$ along
$\Sp^1$. The other choice of sign separates the double root off $\Sp^1$.
\end{lemma}
\begin{proof}
At higher order roots of $b$ there exists a local coordinate $z$ such
that $\partial_\lambda h=z^n$ with $n\ge 2$. If
$\partial_t\partial_\lambda h$ takes the value $C\in\C^\ast$ there, then
$\partial_\lambda h$ is for small $t$ nearby $z=0$ of the form
$$\partial_\lambda h=z^n+Ct+\Order(t^2)+\Order(z).$$
For small $t$ any such function has $n$ distinct simple roots near $z=0$.

The function $h$ and the 1-form $dh$ are purely imaginary on
$\Sp^1$. Therefore $\partial_tdh$ is also purely imaginary on
$\Sp^1$. Due to \eqref{eq:def_c} the 1-form $d\frac{c}{\nu\lambda}$ is
equal to $\partial_tdh$ and purely imaginary on $\Sp^1$. Nearby a
double root of $dh$ on $\Sp^1$ there exists a local coordinate $z$
which is real on $\Sp^1$ such that
$$dh=dh|_{t=0}+t\partial_tdh+\Order(t^2)=
\mi z^2dz+t\left(\mi Cdz+\Order(z)dz\right)+\Order(t^2)
\quad\mbox{with}\quad C\in\R.$$
For small $t>0$ the 1-form $dh$ has for $C>0$ no root and for $C<0$
two roots in $z\in\R$ near $z=0$
\end{proof}

\noindent{\bf 4.}
After steps~1-3 we can assume that the short arc has length smaller than $\pi$,
and all roots of $b$ and $a$ are simple and pairwise distinct, and lie
away from the Sym points. In step~4 we achieve in addition that $\mu$
takes at the roots of $b$ in $\Sp^1$ pairwise different values on
$\Sp^1\setminus\{-1,1\}$ and at the other roots pairwise different
values on $\C^\ast\setminus\Sp^1$. The $t$-derivative of
$h(\beta(t),t)$ at a simple root $\beta(t)$ of $b$ does not depend on
$\dot{\beta}(t)$ and  is determined by $c(\beta(t))$ (see
\eqref{eq:vector}). For roots $\beta$ on $\Sp^1$ the polynomial
$c(\lambda)=\frac{\lambda+\beta}{\lambda-\beta}b(\lambda)$ vanishes at
all other roots of $b$. For roots $\beta\in\C^\ast\setminus\Sp^1$
the polynomials $c(\lambda)=(\frac{C}{\lambda-\beta}-
\frac{\bar{C}\lambda}{1-\bar{\beta}\lambda})b(\lambda)$ vanishes at all roots
of $b$ besides $\beta$ and $\bar{\beta}^{-1}$. Therefore we may change
the values of $\mu$ at the simple roots of $b$ independently.

\noindent{\bf 5.} After the first four steps we have achieved that $a$ and $b$ have pairwise different simple roots in $\C^\ast\setminus\{\lambda_1,\lambda_2\}$, and such that $\mu$ takes at the roots of $b$ pairwise different values in $\C^\ast\setminus\{1,-1\}$. In step~5 we remove all roots of $b$ from the
short arc with possibly one exception. Here we use two Lemmata.

\begin{lemma}\label{l1}
Let $\beta\in\Sp^1$ be a simple root of $b$ in the interior of the
short arc. We choose the sign of the polynomial
$c=\pm\frac{\lambda+\beta}{\lambda-\beta}b$, whose vector field
decreases $\arg\mu$ at the branch having on $\Sp^1$ at $\beta$ a local
maximum, and increases $\arg\mu$ at the branch having on $\Sp^1$ at
$\beta$ a local minimum, respectively. Then this vector field decreases
the length of the short arc.

If $\beta$ belongs to the interior of the long arc,
then the same conclusion holds for the sign, which increases $\arg\mu$
at the branch having on $\Sp^1$ at $\beta$ a local maximum.
\end{lemma}

\begin{proof}\label{p2}
Let $\beta\in\Sp^1$ be a simple root of $b$ in the interior of the
short arc. The vector field corresponding to $c=\mi b$ describes a
rotation of $\lambda$ and does not change the length of the short and
the long arc. Therefore we can add to $c$ a real multiple of $\mi b$
without changing the derivative of the length of the short arc. After
adding to $c$ an appropriate multiple of $\mi b$ the quotient
$\frac{c}{b}$ will have a unique root in the interior of the long
arc. In this case, due to the formula~\eqref{eq:integrability_2}, the
sign of the imaginary parts of $\dot{\lambda}_1/\lambda_1$ and
$\dot{\lambda}_2/\lambda_2$ are different. The branch of $\arg\mu$
having on $\Sp^1$ at $\beta$ a local maximum is monotonically
decreasing for $\arg\lambda>\arg\beta$. Consequently, $\arg\lambda_2$
changes in the same direction as $\arg\mu$ at the maximum over
$\beta$. This implies the claim.
\end{proof}

\begin{lemma}\label{l2}
Let $\beta\in \Sp^1$ be a simple root of $b$ along the integral curve of
the vector field corresponding to
$c=\pm\frac{\lambda+\beta}{\lambda-\beta}b$. Two roots of $a$ can only
coalesce at $\beta$ at some point of the integral curve, if the branch
of $\arg\mu$ having on $\Sp^1$ at $\beta$ a local maximum increases,
and the branch of $\arg\mu$ having on $\Sp^1$ at $\beta$ a local
minimum decreases.
\end{lemma}

\begin{proof}\label{p3}
The number by which a prescribed value is attained by the function
$\mu$ inside a given domain $\Omega\subset\C^\ast$ cannot change,
as long as this value is not attained on the boundary $\partial\Omega$.
The function $\mu$ takes at the roots of $a$ a value
$\mu_0\in\{-1,1\}$ independent of $t$. If two roots coalesce at a
simple root $\beta\in\Sp^1$, then $\arg\mu$ takes the corresponding
value $\arg\mu_0\in2\pi\Z$ twice on $\Sp^1$, once with multiplicity
one at both branches of $\arg\mu$. Therefore $\arg\mu$ takes on a
neighbourhood of $\beta$ in $\Sp^1$ values in the complement of
$(\arg\mu_0-\epsilon,\arg\mu_0+\epsilon)$, as long as $\arg\mu_0$ is
taken at two roots of $a$ in a neighbourhood of $\beta$ away from
$\Sp^1$. More precisely the branch of $\arg\mu$ having at $\beta$ a
local maximum takes at the maximum a value smaller than
$\arg\mu_0$. The other branch takes at the minimum a
value larger then $\arg\mu_0$. Consequently the values of $\arg\mu$
increases on the branch having at $\beta$ a local maximum before the
roots of $a$ coalesce at $\beta$, and the values of $\arg\mu$
decreases on the branch having at $\beta$ a local minimum. In
Figure~8 the graphs of the two branches of $\arg\mu$ nearby a simple
root of $b$ in $\Sp^1$ for coalescing roots of $a$ are shown.
\end{proof}

\noindent{\it Continuation of the proof of the Theorem~\ref{t1}:}
Every simple root $\beta_i$ of $b$ in the short arc is a local
extremum of the restriction of $\arg\mu$ to $\Sp^1$. First we assume
that the short arc contains more than one root of $b$. Then two of the
roots of $b(\lambda)(\lambda-\lambda_1)(\lambda-\lambda_2)$ on $\Sp^1$ are
adjacent neighbours of $\beta_i$. The value $\arg\mu_1$ of $\arg\mu$ at
one of these two neighbours is closer to the value $\arg\mu_0$ of
$\arg\mu$ at $\beta_i$ than the other value $\arg\mu_2$. Then there
exists a smooth embedding $\beta_i:[-1,1]\hookrightarrow\Sp^1$ whose
restriction to $[0,1]$ moves the root $\beta_i$ to this neighbour and
obeys \eqref{eq:symmetry}. This path together with the constant
paths of all other roots of $b$ obey
conditions~\eqref{cond initial}-\eqref{cond reality} of
Proposition~\ref{global existence}. If the neighbour is a Sym point,
then condition~\eqref{cond embedding} is violated for $t=1$. The
construction of Proposition~\ref{global existence} applies to this
situation and yields a path of spectral data. At the end point of
the path the equations~\eqref{eq:integrability_2} becomes singular. As
we have seen in step~2 such singularities are bifurcation points of the
movement of the Sym points. The corresponding path of spectral data is an
integral curve of the vector field described in
Lemma~\ref{l1} and increases the mean curvature. This deformation
changes the values of $\mu$ at the simple root at $\beta_i$ of $b$ and
fixes the values of $\mu$ at all other roots of $b$, $a$ and at the
Sym points. For $t=1$ the root $\beta_i$ either meets one of the Sym
points $\lambda_1$ and $\lambda_2$ or another root of $b$. If
$\beta_i$ meets only one Sym point, then the deformation described in
step~2 moves $\beta_i$ into the long arc. If $\beta_i$
meets another root $\beta_j=\beta_i(1)$ of $b$, then we move with
Lemma~\ref{higher roots} $\beta_i$ and $\beta_j$ by a small
deformation away from $\Sp^1$. Afterwards we continue with the
deformation of another root of $b$ in the short arc.

After finitely many such deformations we arrive at spectral data with
at most one root of $b$ in the short arc. Moreover we can assume that
along the short arc the integral of $dh$
vanishes (and a branch of $h$ takes at $\lambda_1$ and $\lambda_2$
the same value).

If the integral of $dh$ along the short arc does
not vanish, we can move the last root of $b$ inside the short arc
into the long arc without shrinking the short arc to zero. If finally
the short arc does not contain any root of $b$, then the integral of
$dh$ along the short arc does not vanish. Since the
integral of $dh$ has to be a multiple of $2\pi\mi$, there exists
one point in the short arc, such that the integral from both Sym
points to this point of $dh$ are equal. At this point $\mu$ has
to be equal to $\pm1$. With Lemma~\ref{jump} we add to $a$ a double
root and to $b$ a simple root at this point in $\Sp^1$. We obtain new
spectral data in $\moduli^+$. With Lemma~\ref{smooth moduli} we deform the
double root of $a$ on $\Sp^1$ into two roots away from $\Sp^1$ and the
root of $b$ staying on $\Sp^1$ (compare Lemma~\ref{l2}).

After this deformation the sign of $dh$ changes at the root of $b$. Since
initially the integrals of $dh$ along both segments of the short arc
are equal up to sign, the integral of $dh$ along the short arc will be zero
afterwards. Therefore we end with exactly one root of $b$ in the
short arc and the integral of $dh$ over the short arc vanishes as we assume
in the preceding paragraph.

\noindent{\bf 6.} In step~6 we move all roots of $b$
with the exception of the single root in the short arc to the long arc.
To all pairs of roots in $\C^\ast\setminus\Sp^1$ we apply
successively the following lemma.

\begin{lemma}\label{l3}
Let the short arc contain one root and the integral of $dh$ along the
short arc vanish.
If there exists a pair of simple roots of $b$ interchanged by
$\lambda\to\bar{\lambda}^{-1}$ with values of $\mu$ not in $\Sp^1$,
then we can move one of these pairs of roots of $b$ to a point
$\lambda_0$ in the long arc.
\end{lemma}
\begin{proof}\label{p4}
We apply Proposition~\ref{global existence}. First we construct a path
$\beta_i:[-1,1]\to\overline{B(0,1)}$, which together with the path
$\beta_j(t)=\Bar{\beta}_i^{-1}$ obeys conditions~(i)-(iv) in
Proposition~\ref{global existence}. We consider for all $C>1$ the sets
$$S_C=\{(\lambda,\nu)\in\Sigma^\ast\mid\lambda\in B(0,1)\mbox{ and }
|\mu(\lambda,\nu)|=C\}.$$
If $S_C$ does not contain roots of $b$, then it is a one-dimensional
submanifold of $\Sigma^\ast$. On a punctured disc in $\Sigma$ around
the point $(\nu, \lambda) =(\infty,0)$ the function $h$ has a unique
single-valued injective branch with $\sigma^\ast h=-h$. For large
$C>1$, $S_C$ is completely contained in this punctured disc and has
only one connected component. If we decrease $C$, then $S_C$
remains a connected component as long as $S_C$ does not contain a root of
$b$. By assumption $S_C$ contains roots of $b$ for some $C>1$. There
exists a largest $C_1>0$, such that $S_{C_1}$ contains roots of $b$. Now
we choose a path $\gamma$ in $\overline{B(0,1)}$ starting at some point
$\lambda_0$ in the interior of the long arc, which intersects these
submanifolds $(S_C)_{C>1}$ transversally. For $C>C_1$ the sets $S_C$
are connected. Moreover the connected
component of $S_{C_1}$ which intersects $\gamma$ contains a root of
$b$. Therefore there exists a smallest $C_2>1$ such that the connected
component of $S_{C_2}$ which intersects $\gamma$ contains a root of
$b$. For $1<C<C_2$ the connected components of $S_C$ intersecting
$\gamma$ contain no root of $b$ and are either diffeomorphic to
$\arg\mu\in\R$ or to $\arg\mu\in\R/2n\pi\Z$ for some
$n\in\N$. Therefore there exists a smooth path intersecting these
lines transversally from the root $\beta_i$ of $b$ in $S_{C_2}$ to
$\lambda_0$. We may choose this path in such a way that any
branch of $h$ maps this path to a straight line in the $h$-plane. In
this way we get a smooth embedded path $\beta_i:[0,1]\to\overline{B(0,1)}$
from a root $\beta_i=\beta_i(0)$ of $b$, with values of $\mu$ not in
$\Sp^1$, to $\lambda_0=\beta_i(1)\in\Sp^1$.

We extend this path smoothly to $[-1,0]$ in such a way that
\eqref{eq:symmetry} is satisfied. If this extension meets another root
of $b$, then we slightly change $\lambda_0$ on $\Sp^1$ and the
straight line $\beta_i([0,1])$ in $h$-plane from $\lambda_0$ to
$\beta_1$ such that the unique extension to $[-1,0]$ obeying
\eqref{eq:symmetry} does not meet another root of $b$. The map $h$ maps
$\beta_i([-1,0])$ to the segment $\beta_i([0,1])$
($h(\beta_i(-t))=h(\beta_i(t))$).
Now we claim that $\beta_i(-1)$ does not belong to $\Sp^1$ but to another
component of $|\mu|=1$. Otherwise the path $\beta_i([-1,1])$
divides $B(0,1)$ into two connected components. But in any neighbourhood
of a point of $S_C$ with $1<C<C_2$ there starts a straight half-line in
the $h$-plane parallel to the segments $\beta_i([0,1])$ and
$\beta_i([-1,0])$ which does not contain roots of $b$ and ends at
$\lambda=0$. Such half-lines do not intersect $\beta_i([-1,1])$. Hence
$\beta_i([-1,1])$ does not divide $B(0,1)$ and $\beta_i(-1)\not\in\Sp^1$.

Now we apply Proposition~\ref{global existence} to prove the Lemma.
For the conjugated root of $b$ we choose the conjugated
path $\beta_j(t)=\bar{\beta}_i^{-1}(t)$. Along the deformation it could happen
that the mean curvature goes to zero i.e. the length of the short arc goes
to $\pi$. To avoid this case we move the simple root in the short arc.

For the unique root of $b$ in
the short arc we choose a path $\beta_k:[-1,1]\to\Sp^1$ along the
short arc connecting both Sym points $\beta_k(-1)=\lambda_1$ and
$\beta_k(1)=\lambda_2$ and obeying \eqref{eq:symmetry}. Besides these
three paths $\beta_i$, $\beta_j$ and $\beta_k$ we choose constant
paths for all other roots of $b$. For any $s\in[0,1)$ we evaluate
$\beta_k$ at $st$ and restrict the path $\beta_k$ to $[-s,s]$. Then
these paths obey
conditions~\eqref{cond embedding}-\eqref{cond reality}. The
Proposition~\ref{global existence} yields a two-dimensional family of
deformed spectral data $(a_{s,t},b_{s,t},\lambda_{1,s,t},\lambda_{2,s,t}
)_{(t,s)\in[0,1]\times[0,1)}\in\moduli^g$, such that
$\Delta$~\eqref{eq:delta} takes at
the deformed roots $\beta_i$, $\beta_j$ and $\beta_k$ of $b_{s,t}$ the
values $\Delta_0(\beta_i(t))$, $\Delta_0(\beta_j(t))$ and
$\Delta_0(\beta_k(s))$, respectively. This family is again unique and
for $t\in[0,1)$ smooth, if we normalise $\lambda_1\lambda_2^{-1}$
independent of $s$ and $t$. Let $L(s,t)$ denote the corresponding
length of the short arc. Due to Lemma~\ref{l1} $s\mapsto L(s,t)$ is for
all $t\in[0,1]$ monotonically decreasing. With
$L_{\min}=\min\{L(0,t)\mid t\in[0,1]\}$ there exists for all
$t\in[0,1]$ a unique $s(t)\in[0,1)$ such that $L(s,t)=L_{\min}$. First
we take the path of spectral data parameterised by $(s,0)$ with
$s\in[0,s(0)]$ and then the path of spectral data parameterised by
$(t,s(t))$ with $t\in[0,1]$. We obtain a continuous
path of spectral data in $\moduli^g_+$, which deforms the conjugated
pair $(\beta_i,\beta_j)$ of roots of $b$ into a pair of double roots
on $\Sp^1$.
\end{proof}

\noindent{\bf 7.} In step~7 we finally decrease the
genus. First we show

\begin{lemma}\label{l4}
Suppose $a\in\C^{2g}[\lambda]$ has $2g$ simple roots, and
$b\in\C^{g+1}[\lambda]$ has $g+1$ simple roots in $\Sp^1$, and they obey
conditions~(i)-(iii) in Definition~\ref{spectral data}. Then
the set of $\lambda\in\CP$ with $h\in\mi\R$
is the union of $\Sp^1$ with finitely mutually disjoint smooth curves
intersecting $\Sp^1$ exactly once in a root of $b$. Each curve
connects a pair of roots of $a$ interchanged by
$\lambda\to\bar{\lambda}^{-1}$ or $(\lambda=0,\lambda=\infty)$ with
each other.
\end{lemma}
\begin{proof}\label{p5}
The set of $\lambda\in\C^\ast$ where $h$ takes values in
$\mi\R$ is away from the roots of $a$ and $b$ a submanifold
of $\C^\ast$. At each root of $a$ the function $(h-n\mi\pi)^2$
vanishes for some $n\in\Z$ and is a local coordinate. Hence
all roots of $a$ are endpoints of this submanifold. At the roots of
$b$ two such submanifolds intersect transversely. Furthermore $h^{-2}$
is a local coordinate at the marked points $\lambda=0$ and
$\lambda=\infty$. They are also endpoints of this submanifold in
$\CP$. Therefore the intersection with $B(0,1)\subset\C^\ast$
is a disjoint union of smooth curves, which end either at $\lambda=0$,
or at the roots of $a$ inside of  $B(0,1)$, or at the roots of $b$ in
$\partial B(0,1)$. In every connected component of the complement of
these paths in $B(0,1)$ which does not contain $\lambda=0$ the real
part of $h$ vanishes by the maximum principle. Therefore
the complement of these paths in $B(0,1)$ is connected. Hence there is
no path, which connects a root of $b$ in $\partial B(0,1)$ with
another such root. Then all $g+1$ roots of $b$ in $\partial B(0,1)$
are connected either with $\lambda=0$ or with one of the $g$ roots of
$a$ inside of $B(0,1)$. The involution
$\lambda\mapsto\bar{\lambda}^{-1}$ maps these paths in $B(0,1)$ onto
the corresponding paths in $\C^\ast\setminus B(0,1)$. This proves the claim.
\end{proof}
\noindent{\it Conclusion of the proof of the Theorem~\ref{t1}:}
We thus arrive at spectral data $(a,b,\lambda_1,\lambda_2)$ such
that the short arc contains exactly one root of $b$ and the integral
of $dh$ along the short arc vanishes. With Lemma~\ref{l3} we move
successively the pairs of roots of $b$ away from $\Sp^1$ to double
roots in the long arc and separate with Lemma~\ref{higher roots} the
double root into two different roots on $\Sp^1$. Afterwards we continue
with the deformation of another pair of roots of $b$ in
$\C^\ast\setminus\Sp^1$.

We are now in the situation where all roots of $b$ are simple roots
inside the interior of the long arc, and one simple root lies in the
interior of the short arc. Due to Lemma~\ref{l4} all roots of $b$
correspond to a pair of branch points. For any root $\beta_i$ of $b$
in the long arc, which does not correspond to the pair
$(\lambda=0,\lambda=\infty)$, there exists a smooth curve
$\beta_i:[-1,1]\to\C^\ast$ from the root $\beta_i(-1)$ in $B(0,1)$
of $a$ to the root $\beta_i(1)$ outside of $B(0,1)$ along the
curve described in Lemma~\ref{l4}. Furthermore there exists a
branch $h$ along this path and a unique $n_i\in\Z$ such that
$h-n_i\pi\mi$ vanishes at both end points by the reality condition, but
not in between, since $dh$ has only one simple root on this path at
the zero of $b$. Finally we may parameterise this path in such a way
that \eqref{eq:symmetry} is fulfilled. Together with the
constant paths for all other roots of $b$ this path obeys
conditions~\eqref{cond initial}-\eqref{cond reality}. The path meets
two roots of $a$ and does not obey \eqref{cond embedding}. The
function $(h-n_i\mi\pi)^2$ depends on $\lambda$ and does not have
critical points at the roots of $a$. The whole construction of the
proof of Proposition~\ref{global existence} carries over. But in this
case the tubular neighbourhood $V_i$ of the path $\beta_i([-1,1])$
contains two roots of $a$ at the roots of $A_i=(h-n_i\pi\mi)^2$ like
in Lemma~\ref{smooth moduli}. During the whole deformation these two
roots of $A_{i,t}$ are in $W_{i,t}$. For $t=1$ these two roots
form a double root at $\beta_i(1)$, which is a single root for
$t=0$. Due to
Lemma~\ref{l1} the mean curvature increases along this path, and this
path stays in $\moduli_+^g$. At the end point we reduce with
Lemma~\ref{jump} the genus by one. We apply this deformation to all
roots of $b$ in the long arc with the exception of possibly the root
$\beta_j$ which is connected by the curve of Lemma~\ref{l4} with the pair
$(\lambda=0,\lambda=\infty)$. The spectral genus is successively
reduced to at most one.

If finally the unique simple root $\beta_k$ in the short arc is
connected by the curve of Lemma~\ref{l4} with a pair of finite
branch points, we apply the analogous deformation to $\beta_k$.
Due to Lemma~\ref{l1} along this deformation the length of the short
arc increases, and the end point has spectral genus zero but might not
belong to $\moduli_+^0$. In this case we increase afterwards
$\arg(\mu)$ at the local maximum over the last root $\beta_j$
of $b$ in the long arc. Since the spectral genus is zero $h$ is
a meromorphic function and has two roots over $-\beta_j$. If we
increase the argument of $\mu$ at the local maximum over $\beta_j$ by
a large real number $s$ then $b(0)$ becomes very large and both Sym points
move near to the roots of $h$ at $-\beta_j\in\Sp^1$. Therefore
the short arc becomes very small and the path enters again
$\moduli_+^0$. Finally we interchange the order of these two
deformations. We first increase the argument of $\mu$ at the local
maximum over $\beta_j$ by $s$ along the path constructed in
Lemma~\ref{l4} connecting $\beta_j$ with $\lambda=0$ and
$\lambda=\infty$. This deformation decreases the length of the short
arc. Afterwards we increase the length of the short arc along the path
$\beta_k$ of the root in the short arc. As a result the whole path
stays in $\moduli_+$ and reduces the spectral genus to zero. This
completes the proof of Theorem~\ref{t1}.
\end{proof}
%
%%%%%%%%%%%%%%%%%%%%%%%%%%%%%%%%%%%%%%%%%%%%%%%%%%%%%
%%%%%%%%%%%%%%%%%%%%%%%%%%%%%%%%%%%%%%%%%%%%%%%%%%%%%
%
\section{Isolated property of $\mrot$ in $\mae$}\label{sec:isolated}
\begin{theorem}\label{isolated}
{\rm{(i)}} For $g=0,1$: $\mrot^g$ is open and closed in $\moduli_+^g$. \\
{\rm{(ii)}} Let $(a,b,\lambda_1,\lambda_2)\!\in\!\mrot^1$ and
$(\tilde{a},\tilde{b},\lambda_1,\lambda_2)\!\in\!
\moduli_+^{1+\deg p}$ with $\deg p \geq 1$
be as in Lemma~\ref{jump}. At
$(\tilde{a},\tilde{b},\lambda_1,\lambda_2)$ starts a piecewise
continuous path in $\moduli_+$ to
$\moduli_+^0\!\setminus\!\mrot^0$ with decreasing $g$.
\end{theorem}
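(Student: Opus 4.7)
The plan is to handle Part (i) and Part (ii) separately.

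Part (i) with $g=0$ is immediate from Theorem~\ref{characterization}: property~(\ref{prop1}) identifies $\mrot^0$ with $\mae^0$, and property~(\ref{prop3}) asserts $\mae^0$ is open and closed in $\moduli_+^0$. For Part (i) with $g=1$, both $\mrot^1$ and $\moduli_+^1$ are two-real-dimensional (the latter by Lemma~\ref{smooth moduli}, the former by the parameterization $(H,\alpha)\in[0,\infty)\times[2,\infty)$ in Theorem~\ref{thm:onesided revolution}), so I would reduce the problem to openness; closedness then follows from the continuity of this parameterization together with the properness of the map $A$ in Lemma~\ref{open and proper}. To prove openness at a point $(a,b,\lambda_1,\lambda_2)\in\mrot^1$, the plan is to compute the two tangent vectors along $\mrot^1$ arising from $\partial_H$ and $\partial_\alpha$, translate them via equations~\eqref{eq:integrability_1}--\eqref{eq:integrability_2} into polynomials $c \in \C^{g+1}[\lambda]$, and verify that these two polynomials are linearly independent, hence span the two-dimensional tangent space of $\moduli_+^1$ identified in Lemma~\ref{smooth moduli}.

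For Part (ii), the plan is to construct the required path in two phases. In Phase~1, apply the techniques of Proposition~\ref{global existence} together with Lemma~\ref{smooth moduli} to deform the double roots of $\tilde a = p^2 a$ coming from $p^2$ into pairs of simple roots; this places us in an interior region of $\moduli_+^{1+\deg p}$ where the vector fields of Section~\ref{sec:deformation of cylinders} are smooth. In Phase~2, apply the multi-step reduction of Theorem~\ref{t1} to produce a piecewise continuous path in $\moduli_+$ with decreasing spectral genus terminating in $\moduli_+^0$. The essential modification is to exploit the additional $\deg p \geq 1$ roots of $\tilde b$ (beyond those of the rotational $b$) to steer the closing numbers $m,n$ away from the value $\pm 1$ characterizing $\mrot^0$: at each genus-reducing step via Lemma~\ref{jump}, the integration of $dh$ over the vanishing cycle contributes to the integer invariants $m$ and $n$, and the choices in Lemmata~\ref{l1} and \ref{l3} and in the concluding step of Theorem~\ref{t1} can be arranged so that $|m|>1$ or $|n|>1$ at the endpoint.

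The main obstacle I anticipate lies in Part~(ii): tracking the integer closing numbers throughout the construction of Theorem~\ref{t1} and verifying that the extra degrees of freedom furnished by $p$ truly suffice to shift the endpoint out of $\mrot^0$. The underlying geometric reason should be that adding the factor $p$ to the spectral data corresponds to adjoining bubbletons (cf.\ the discussion preceding Definition~\ref{def piecewise continuous}), which injects non-trivial topology that cannot be entirely undone by pure genus reduction.
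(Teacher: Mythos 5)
Your Part~(i) is essentially the paper's argument: for $g=1$ the authors likewise invoke Lemma~\ref{smooth moduli} to see that $\moduli^1$ is a real two-dimensional submanifold at points of $\mrot^1$ (with the image of \eqref{eq:embedding}, $\alpha=2$, as boundary), observe that $(H,\alpha)$ are local coordinates there, and deduce closedness from convergence of the parameters. Your shortcut for $g=0$ via $\mrot^0=\mae^0$ together with Proposition~\ref{embedded deformation} is logically admissible, since Theorem~\ref{characterization} does not depend on the present theorem, although the paper argues directly from the dimension count.

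The genuine gap is in Part~(ii), precisely at the step you flag as the anticipated obstacle. The closing integers $m,n$ (determined by $h(\lambda_1),h(\lambda_2)\in\pi\mi\Z$) cannot be ``steered away from $\pm1$'' by any piecewise continuous path in $\moduli$: the deformations of Section~\ref{sec:deformation of cylinders} are constructed exactly so that $\tfrac{d}{dt}h(\lambda_j(t),t)=0$ (this is the content of \eqref{eq:integrability_2}), at a jump of Lemma~\ref{jump} the function $h$ is literally unchanged ($d\tilde h=dh$, $\tilde h=h$), and a vanishing cycle contributes nothing because its period lies in $2\pi\mi\Z$ and tends to zero. A continuous lift of $t\mapsto\lambda_j(t)$ to the spectral curve can at worst pass through a branch point, which replaces $h$ by $-h$; hence $|m|=|n|=1$ at the start forces $|m|=|n|=1$ at the genus-zero endpoint, so your proposed criterion for landing in $\moduli_+^0\setminus\mrot^0$ is never satisfied. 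The paper uses a different invariant: the pair of double roots of $\tilde a=p^2a$ off $\Sp^1$ (which Lemma~\ref{jump} forces to sit at points with $\mu=\pm1$) is opened into simple roots, transported via Lemma~\ref{l3} and Lemma~\ref{l1}, and recombined on $\Sp^1$, so that the terminal genus-zero data carries three double roots of $\tilde a$ on $\Sp^1$, i.e.\ three points of $\Sp^1$ besides the Sym points at which $\mu^2=1$. By Theorem~\ref{thm:onesided revolution} an element of $\mrot^0$ has exactly one such point, namely $\lambda=-1$; hence the endpoint lies in $\moduli_+^0\setminus\mrot^0$ (it is the ``second solution'' for $\beta_0^2$, still with $m=n=\pm1$). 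Without replacing your mechanism by one of this kind, Phase~2 of your argument does not close.
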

The first statement shows that a piecewise continuous path
can only enter or leave $\mrot$ at jumps described in
Lemma~\ref{jump} from $(a,b,\lambda_1,\lambda_2)\in\mrot^g$ to
$(\tilde{a},\tilde{b},\lambda_1,\lambda_2)\not\in\mrot^{g+\deg p}$. The
second statement will imply in the proof of Theorem~\ref{thm:main} in
Section~\ref{sec:proof main theorem} that
$(\tilde{a},\tilde{b},\lambda_1,\lambda_2)$ does not belong to
$\mae$ and that $\mrot$ is isolated in $\mae$.

\noindent{\it Proof of }(i): %\begin{proof}
First we show that $\mrot^g$ ($g=0$ or $g=1$) is open in $\moduli^g_+$. We use the implicit function theorem to prove that $\moduli^g$ is at $(a,b,\lambda_1,\lambda_2)\in\mrot^g$ a real submanifold of $\C^{2g}[\lambda]\times \C^{g+1}[\lambda]\times\Sp^1\times\Sp^1$ of dimension $g+1$, the same dimension as $\mrot$. Due to Lemma~\ref{smooth moduli}, $\tilde{\moduli}^g$ is at $(a,b,\lambda_1,\lambda_2)\in\mrot^g$ a submanifold. Due to Theorem~\ref{thm:onesided revolution}, $a$ and $b$ have a common root only in case $g=1$ and $(a,b,\lambda_1,\lambda_2)$ belongs to the image of the map
\begin{align}\label{eq:embedding}
    \mrot^0&\hookrightarrow\mrot^1,&(a,b,\lambda_1,\lambda_2)&\mapsto
    ((\lambda+1)^2a,(\lambda+1)b,\lambda_1,\lambda_2)\,.
\end{align}
On the image of \eqref{eq:embedding}, the polynomial $a$ is equal to $a=-(\lambda+1)^2/16$ and the coefficients $\alpha$ and $H$ in Theorem~\ref{thm:onesided revolution} are local coordinates of $\tilde{M}$. The inequality in \eqref{eq:a_reality} does not allow simple roots of $a$ on $\Sp^1$, which is equivalent to $\alpha\ge 2$. Therefore $\moduli^1$ is at $(a,b,\lambda_1,\lambda_2) \in \mrot^1$ a two-dimensional manifold with image of \eqref{eq:embedding} as boundary. This implies that $\mrot^0$ and $\mrot^1$ are open in $\moduli^0_+$ and $\moduli^1_+$.

For sequences in $\mrot^g$ ($g=0$ or $g=1$) which converge in $\moduli^g_+$ the corresponding sequences of parameters $H$ or $(H,\alpha)$ have to converge. Therefore $\mrot^g$ is closed in $\moduli^g_+$.

\noindent{\it Proof of }(ii):
For $(a,b,\lambda_1,\lambda_2)\in\mrot^1$, the short arc and the long arc contain both one root of $b$. Due to Theorem~\ref{thm:onesided revolution}, $\tilde{a}$ has at least one pair of double roots away from $\Sp^1$. If $\tilde{a}$ has several pairs of double roots we choose one and remove all others. Hence we may assume $(\tilde{a},\tilde{b},\lambda_1,\lambda_2)\in\moduli^3$ with $\tilde{a}$ having a pair of double roots $\beta,\bar{\beta}^{-1}\in\C\setminus\Sp^1$. Since $b$ has only roots at $\lambda=\pm1$ the corresponding $\tilde{b}$ has simple roots at $\beta$ and $\bar{\beta}^{-1}$. Now we follow the path described in Theorem~\ref{t1}. We start with step~2 and open with Lemma~\ref{smooth moduli} two double roots of $\tilde{a}$ at $\beta$ and $\bar{\beta}^{-1}$ and eventually a third double root at $\lambda=-1$ by a small deformation with decreasing mean curvature. Due to Lemma~\ref{l4}, the single root in the short arc is connected along a path with $\mu\in\Sp^1$ to $(\lambda=0,\lambda=\infty)$. We move the roots $\beta$ and $\bar{\beta}^{-1}$ of $\tilde{b}$ along a path described in Lemma~\ref{l3} to the long arc and then separate them by a small deformation described in step~6 of the proof of Theorem~\ref{t1}. We end up with three roots of $\tilde{b}$ in the long arc and one root of $\tilde{b}$ in the short arc. All roots of the long arc are connected with the paths described in Lemma~\ref{l4} with pairs of finite branch points. With the deformations described in Lemma~\ref{l1} we successively shorten these paths from $\Sp^1$ to the finite branch points until all three of them are very small. Finally we end up with spectral data of genus zero with three double roots of $\tilde{a}$ on $\Sp^1$. Hence the end point of the deformation in Theorem~\ref{t1} does not belong to $\mrot^0$ which are classified in Theorem~\ref{thm:onesided revolution}. This completes the proof of Theorem~\ref{isolated}.
\qed%\end{proof}
%
%%%%%%%%%%%%%%%%%%%%%%%%%%%%%%%%%%%%%%%%%%%%%%%%%%%%%%%%%%%%%%%%
%%%%%%%%%%%%%%%%%%%%%%%%%%%%%%%%%%%%%%%%%%%%%%%%%%%%%%%%%%%%%%%%
%
\section{Proof of Theorem~\ref{thm:main}}\label{sec:proof main theorem}
By definition of $\mae$ it suffices to show the equality $\mae=\bigcup_{g\in\N\cup\{0\}}\mae^g=\mrot$, i.e.\ $\mae^0=\mrot^0$, $\mae^1=\mrot^1$ and $\mae^g=\emptyset$ for $g>1$. Indeed we will show that any subset $\mae\subset\moduli_+$, which has the properties~\eqref{prop1}-\eqref{prop3} in Theorem~\ref{characterization} is equal to $\mrot$. We remark that we do not make use of the inclusion $\mrot^1\subset\mae^1$ which is equaivalent to $\bigcup_{(a,b,\lambda_1,\lambda_2)\in\mrot^1}\ann(a,b,\lambda_1,\lambda_2)\subset\annae$. In fact, this inclusion follows from the properties~\eqref{prop1}-\eqref{prop3} and~\eqref{eq:embedding} since $\mrot^1$ is connected.

First we show for pairs $(a,b,\lambda_1,\lambda_2)\in\mrot$ and $(\tilde{a},\tilde{b},\lambda_1,\lambda_2)\not\in\mrot$ as in Lemma~\ref{jump} that $(\tilde{a},\tilde{b},\lambda_1,\lambda_2)$ does not belong to $\mae$. The map \eqref{eq:embedding} guarantees either $(a,b,\lambda_1,\lambda_2)\in\mrot^1$ or $((\lambda+1)^2a,(\lambda+1)b,\lambda_1,\lambda_2)\in\mrot^1$. If in the second case $(\lambda+1)^2$ divides $\tilde{a}$, then the pair $((\lambda+1)^2a,(\lambda+1)b,\lambda_1,\lambda_2)\in\mrot^1$ and $(\tilde{a},\tilde{b},\lambda_1,\lambda_2)\not\in\mrot$ fulfills the assumptions of part~(ii) in Theorem~\ref{isolated}. Otherwise due to Lemma~\ref{jump} $((\lambda+1)^2\tilde{a},(\lambda+1)\tilde{b},\lambda_1,\lambda_2)\in\moduli_+^{1+\deg p}$ and $((\lambda+1)^2a,(\lambda+1)b,\lambda_1,\lambda_2)$ and $((\lambda+1)^2\tilde{a},(\lambda+1)\tilde{b},\lambda_1,\lambda_2)$ fulfills these assumptions. Theorem~\ref{isolated}~(ii) yields a piecewise continuous path either from $(\tilde{a},\tilde{b},\lambda_1,\lambda_2)\not\in\mrot$ or from $((\lambda+1)^2\tilde{a},(\lambda+1)\tilde{b},\lambda_1,\lambda_2)\not\in\mrot$ to $\moduli_+^0\setminus\mrot^0$ with decreasing $g$. For $(\tilde{a},\tilde{b},\lambda_1,\lambda_2)\in\mae$ this path stays due to properties~(ii)-(iii) in $\mae$ in contradiction to (i). This implies $(\tilde{a},\tilde{b},\lambda_1,\lambda_2)\not\in\mae$.

Now we show $\mae\subset\mrot$. Due to Theorem~\ref{t1} there starts at any element of $\mae$ a piecewise continuous path to $\moduli_+^0$. Due to properties~\eqref{prop2}-\eqref{prop3} this path stays in $\mae$. Due to property~\eqref{prop1} it ends in $\mrot^0$. Since $\mrot^g$ is closed in $\moduli_+^g$ (Theorem~\ref{isolated}) at some point the path enters the set $\mrot$ and stays in $\mrot$ until it reaches $\mrot^0$. Due to Theorem~\ref{isolated}~(i) and the first claim this point has to be the initial element of $\mae$ and all elements of $\mae$ belong to $\mrot$.

Conversely, due to properties~\eqref{prop1}-\eqref{prop3} and \eqref{eq:embedding} $\mae$ contains $\mrot$, since $\mrot^1$ is connected.
%
%%%%%%%%%%%%%%%%%%%%%%%%%%%%%%%%%%%%%%%%%%%%%%%%%%%%%%%%%%%%%%%
%%%%%%%%%%%%%%%%%%%%%%%%%%%%%%%%%%%%%%%%%%%%%%%%%%%%%%%%%%%%%%%
%%%%%%%%%%%%%%%%%%%%%%%%%%%%%%%%%%%%%%%%%%%%%%%%%%%%%%%%%%%%%%%
%
\bibliographystyle{amsplain}
%\bibliography{ref}
\def\cydot{\leavevmode\raise.4ex\hbox{.}} \def\cprime{$'$}
\providecommand{\bysame}{\leavevmode\hbox to3em{\hrulefill}\thinspace}
\providecommand{\MR}{\relax\ifhmode\unskip\space\fi MR }
% \MRhref is called by the amsart/book/proc definition of \MR.
\providecommand{\MRhref}[2]{%
  \href{http://www.ams.org/mathscinet-getitem?mr=#1}{#2}
}
\providecommand{\href}[2]{#2}

\end{document}